\newtheorem{thm}{Theorem}
\newtheorem{lem}[thm]{Lemma}
\newtheorem{cor}[thm]{Corollary}
\newtheorem{prop}[thm]{Proposition}
\theoremstyle{remark}
\newtheorem{rmk}[thm]{Remark}
\theoremstyle{definition}
\numberwithin{equation}{section}
\newcommand{\R}{\mathbb{R}}
\newcommand{\la}{\langle}
\newcommand{\ra}{\rangle}
\newcommand{\be}{\begin{equation}}
\newcommand{\ee}{\end{equation}}
\newcommand{\ben}{\begin{equation*}}
\newcommand{\een}{\end{equation*}}
\def\barroman#1{\sbox0{#1}\dimen0=\dimexpr\wd0+1pt\relax
  \makebox[\dimen0]{\rlap{\vrule width\dimen0 height 0.06ex depth 0.06ex}%
    \rlap{\vrule width\dimen0 height\dimexpr\ht0+0.03ex\relax 
            depth\dimexpr-\ht0+0.09ex\relax}%
    \kern.5pt#1\kern.5pt}}
\newcommand{\twoparteq}[4]
{
	\left\{
		\begin{array}{ll}
			#1 & \mbox{  } #2 \bigskip \\
			#3 & \mbox{  } #4
		\end{array}
	\right.
}
\def\XXint#1#2#3{{\setbox0=\hbox{$#1{#2#3}{\int}$}
     \vcenter{\hbox{$#2#3$}}\kern-.5\wd0}}
\newcommand{\pl}[2]{{\frac{\partial #1}{\partial #2}}}
\newcommand{\de}{\partial}
\newcommand{\N}{\mathbb{N}}
\newcommand{\h}{\mathscr{H}}
\newcommand{\emb}{\hookrightarrow}
\newcommand{\cemb}{\subset \subset}
\newcommand{\In}{\subset}
\newcommand{\Om}{\Omega}
\newcommand{\om}{\omega}
\newcommand{\dl}{{\delta}}
\newcommand{\Dl}{{\Delta}}
\newcommand{\al}{{\alpha}}
\newcommand{\id}{\ d}
\newcommand{\D}{{\nabla}}
\newcommand{\Db}{\nabla^{\bot}}
\newcommand{\ti}[1]{{\tilde{#1}}}
\newcommand{\eps}{{\varepsilon}}
\newcommand{\fr}[2]{\frac{#1}{#2}}
\newcommand{\sm}{{\setminus}}
\newcommand{\Nn}{\mathcal{N}}
\newcommand{\Dv}{{\rm{div}}}
\newcommand{\vlinesub}[1]{\vline_{_{_{_{_{_{_{_{#1}}}}}}}}}
\newcommand{\floor}[1]{\lfloor #1 \rfloor}
\def\XXint#1#2#3{{\setbox0=\hbox{$#1{#2#3}{\int}$}
     \vcenter{\hbox{$#2#3$}}\kern-.5\wd0}}
\begin{document} 	               
       \thanks{\textsl{Mathematics Subject Classification (MSC 2010):} Primary 53A10; Secondary 53C42, 49Q05.}
     \title[Compactness analysis for free boundary minimal hypersurfaces]{Compactness analysis for free boundary minimal hypersurfaces}
     \author{Lucas Ambrozio, Alessandro Carlotto and Ben Sharp}
     \address{\noindent  L. Ambrozio: University of Warwick, Gibbet Hill Rd, Coventry CV4 7AL, United Kingdom \textit{E-mail address: L.Coelho-Ambrozio@warwick.ac.uk} \newline \newline 
     \indent A. Carlotto: ETH - Department of Mathematics, R\"amistrasse 101, 8092 Z\"urich,Switzerland, \textit{E-mail address: alessandro.carlotto@math.ethz.ch} \newline \newline 
     \indent B. Sharp: University of Warwick, Gibbet Hill Rd, Coventry CV4 7AL, United Kingdom \textit{E-mail address: B.Sharp@warwick.ac.uk}}

     	\begin{abstract}We investigate compactness phenomena involving free boundary minimal hypersurfaces in Riemannian manifolds of dimension less than eight. We provide natural geometric conditions that ensure strong one-sheeted graphical subsequential convergence, discuss the limit behaviour when multi-sheeted convergence happens and derive various consequences in terms of finiteness and topological control.
     			\end{abstract}
     	
     	\maketitle

\section{Introduction}

Let $(\Nn^{n+1},g)$ be a compact Riemannian manifold with boundary, $n\geq 2$, and $M^n$ be a properly embedded codimension one submanifold: we say that $M^n$ is a free boundary minimal hypersurface in $(\Nn^{n+1},g)$ if it is a critical point for the $n$-dimensional area functional under the sole constraint that $\partial M\subset\partial\mathcal{N}$ or, equivalently, if $M$ has zero mean curvature and meets the ambient boundary orthogonally. \\
\indent The study of free boundary minimal surfaces, starting from the special setting of Euclidean domains, goes back at least to Courant \cite{Cou40, Cou50} and has attracted considerable attention for several decades. In recent years, various interesting research lines have emerged. First of all, the work of Fraser and Schoen \cite{FS11, FS13, FS16} has clarified the link of these geometric objects with extremal metrics, of given volume, for the first Steklov eigenvalue of manifolds with boundary. Secondly, one has witnessed a few interesting classification results towards conjectural characterisations of the critical catenoid among free boundary minimal hypersurfaces in the Euclidean unit ball \cite{AN16, Dev16, M16, SZ16, Tra16}. Thirdly, and perhaps most relevantly to this paper, various techniques have been developed to prove existence results and produce novel concrete examples. In the Euclidean ball, Fraser and Schoen have obtained free boundary minimal surfaces with genus zero and any number of boundary components larger than one \cite{FS16}, while Folha, Pacard and Zolotareva \cite{FPZ15} have obtained free boundary minimal surfaces with genus zero or one and any sufficiently large number of boundary components. Ketover \cite{Ket16A} proposed an equivariant min-max approach to generate free boundary minimal surfaces with given discrete symmetry group, thereby producing examples with arbitrarily large genus. In more general settings, we shall mention here the results by Li \cite{Li15}, Li-Zhou \cite{LZ16B}, De Lellis-Ramic \cite{DR16} and Maximo-Nunes-Smith \cite{MNS13}, the former three works extending the min-max theory to the free boundary setting and the latter being based on a degree-theoretic approach. Lastly, in higher dimensions, equivariant techniques in the spirit of Hsiang \cite{Hsi83} have been adapted to the free boundary context by Freidin, Gulian and McGrath \cite{FGM16}. \\
\indent The primary scope of this article is to investigate compactness phenomena involving free boundary minimal hypersurfaces, in particular to single out natural geometric conditions that imply strong one-sheeted graphical subsequential convergence, and describe various relevant phenomena when instead multi-sheeted convergence occurs. This analysis leads to interesting geometric conclusions concerning the space of free boundary minimal hypersurfaces inside certain classes of ambient Riemannian manifolds. \\
\indent The starting point of our discussion is the following result by Fraser and Li \cite{FL14}, which should be regarded as a free boundary analogue of the classic theorem by Choi and Schoen \cite{CS85}:

\begin{thm}[Cf. \cite{FL14} Theorem 1.2]\label{thm:3dfraserli}
Let $(\Nn^3,g)$ be a compact Riemannian manifold with non-empty boundary. Suppose that $\Nn^3$ has non-negative Ricci curvature and strictly convex boundary. Then the space of compact, properly embedded, free boundary minimal surfaces of fixed topological type in $\Nn^3$ is compact in the $C^k$ topology for any $k\geq 2$.
\end{thm}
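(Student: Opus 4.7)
\emph{Strategy.} The plan is to adapt the scheme of Choi--Schoen to the free boundary setting. The four ingredients are: (i) a uniform area bound; (ii) a uniform bound on $\int_M |A|^2$; (iii) an $\varepsilon$-regularity estimate for $|A|$ valid up to $\partial M$; and (iv) a blow-up analysis ruling out curvature concentration, both in the interior and along the boundary. Steps (i) and (ii) use the ambient hypotheses directly; steps (iii) and (iv) require delicate boundary modifications of interior arguments.

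\emph{Area and total curvature bounds.} The Gauss equation together with minimality gives $K_M = \bar K(T_pM) - \tfrac{1}{2}|A|^2$, while the orthogonal intersection $M \perp \partial \Nn$ identifies the geodesic curvature of $\partial M$ in $M$ with the ambient boundary second fundamental form $\mathrm{II}^{\partial \Nn}(\tau,\tau)$ applied to the unit tangent $\tau$ of $\partial M$. Gauss--Bonnet therefore yields
\[
\tfrac{1}{2}\int_M |A|^2\,dA = \int_M \bar K(T_pM)\,dA + \int_{\partial M} \mathrm{II}^{\partial \Nn}(\tau,\tau)\,ds - 2\pi\chi(M),
\]
so the ambient curvature bounds control $\int |A|^2$ once $|M|$ and $|\partial M|$ are bounded. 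For the area bound itself I would double $\Nn$ along $\partial \Nn$, obtaining a closed $C^{1,1}$ Riemannian manifold, reflect $M$ to a closed embedded minimal surface $\widetilde M$ of controlled genus in the doubled space, and apply the monotonicity formula to obtain $|\widetilde M| \leq C$; the perimeter $|\partial M|$ is controlled analogously by a boundary monotonicity applied inside $\partial \Nn$ to $\widetilde M \cap \partial \Nn$.

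\emph{Boundary $\varepsilon$-regularity and extraction of bad points.} I would next prove a boundary $\varepsilon$-regularity estimate of the following form: there exist constants $\varepsilon_0, C > 0$ depending only on $(\Nn,g)$ such that whenever $\int_{B_r(p) \cap M} |A|^2 < \varepsilon_0$ on a small geodesic half-ball, one has $\sup_{B_{r/2}(p) \cap M} |A|^2 \leq C/r^2$. In the interior this is classical Choi--Schoen via Simons' identity, the Michael--Simon Sobolev inequality and Moser iteration; at boundary points I would reflect the surface and the ambient metric across $\partial \Nn$ in Fermi coordinates (the free boundary condition is precisely what ensures the reflected surface is of class $C^{1,1}$) and apply the interior estimate to the doubled object. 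Combined with the global $\int|A|^2$ bound, this confines the set of ``bad'' points, where $\varepsilon$-regularity fails in the limit, to a finite set of cardinality at most $\lfloor C/\varepsilon_0 \rfloor$, and gives uniform $C^1$ bounds on compact subsets of $\Nn$ away from this set.

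\emph{Ruling out bubbling and conclusion.} At an interior bad point, rescaling by $\max |A|$ produces a non-trivial complete embedded minimal surface $\Sigma_\infty \subset \R^3$ of finite total curvature; by Huber's theorem and Schoen's classification of embedded finite-total-curvature ends, $\Sigma_\infty$ is either a plane (contradicting $\int_{\text{unit ball}} |A|^2 \geq \varepsilon_0$) or carries topology incompatible with the fixed topological type of the sequence. At a boundary bad point the strict convexity of $\partial \Nn$ flattens under rescaling, the limit is a non-trivial complete embedded free boundary minimal surface in a half-space, and reflection across the flat boundary reproduces the same $\R^3$-picture. Once bubbling is excluded, the curvature bound extends to all of $M$, and standard $C^{k,\alpha}$-theory for minimal graphs satisfying the free boundary condition in Fermi coordinates yields smooth subsequential convergence up to the boundary, for every $k \geq 2$. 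The main obstacle is the boundary $\varepsilon$-regularity together with the boundary blow-up: one has to verify carefully that the reflection across $\partial \Nn$ preserves the minimal surface equation up to controlled lower-order terms that do not spoil the Moser iteration, and that the orthogonality condition ensures the doubled rescaled limit is genuinely smooth and embedded so that the planar alternative can be reached.
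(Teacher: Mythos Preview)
The paper does not actually prove this theorem; it is quoted verbatim from Fraser--Li \cite{FL14} as the starting point of the discussion, so there is no ``paper's own proof'' to compare against. Your outline does follow the Choi--Schoen scheme adapted to the free boundary setting, which is indeed the strategy of \cite{FL14}.

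That said, there is a genuine gap in your step (i). The monotonicity formula gives \emph{lower} bounds on mass ratios, not \emph{upper} bounds on total area; you cannot conclude $|\widetilde M|\leq C$ from monotonicity applied to the doubled surface, and the same objection applies to your proposed bound on $|\partial M|$. In both Choi--Schoen and Fraser--Li the area bound comes instead from spectral estimates: one proves a Choi--Wang--type lower bound $\lambda_1(M)\geq c>0$ (in the free boundary case this uses a Reilly-formula argument that exploits $Ric_{\mathcal N}\geq 0$ and strict convexity of $\partial\mathcal N$), and combines it with a Yang--Yau--type inequality $\lambda_1(M)\cdot\mathcal H^2(M)\leq C(\chi(M))$ to bound the area by the topology. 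Your doubling approach would in any case run into the problem that the doubled metric is only $C^{1,1}$, so its Ricci tensor is a priori only distributional and the eigenvalue estimate would require separate justification. Once the area and length bounds are obtained correctly, the remaining steps (ii)--(iv) of your outline are essentially the right ones, though the bubbling exclusion is more cleanly phrased via total curvature quantisation (a non-planar embedded finite-total-curvature limit in $\R^3$ carries $\int|A|^2\geq 8\pi$) or via the observation that the curvature hypotheses forbid two-sided stable free boundary minimal surfaces, forcing multiplicity one and hence removability of the bad points.
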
	

Of course, one may wonder whether a similar conclusion holds true in higher dimensions, namely whether control on the topology ensures subsequential convergence in the strong sense explained above. In fact, this is definitely not the case: by \cite{FGM16}, for any couple of integers $m, n\geq 2$ such that $m+n<8$ there exists an infinite family of distinct, free boundary minimal hypersurfaces in the Euclidean unit ball of dimension $m+n$, all having the topological type of $D^m\times S^{n-1}$ and converging to a singular limit. For instance, in the case $m=n=2$ the family they construct converges to a cone over a Clifford torus. Furthermore the `second principal family'  constructed by Hsiang in 1983 (see \cite{Hsi83}) provides infinite examples of free boundary minimal hypersurfaces all with the same topology (namely that of $D^2\times S^1$) inside the upper hemisphere $S^4_{+}$, but again the limit of these hypersurfaces is singular. Therefore, there are counterexamples both in the case when either the Ricci tensor vanishes on the interior and the boundary is strictly convex, or the Ricci tensor is positive on the interior and the boundary is weakly convex. \\

In order to proceed and state our results, we let
\ben
\mathfrak{M}:=\left\{ M\emb \Nn \ \left|
		\begin{array}{l}
			   \text{$M$ is a smooth, connected, compact, properly embedded free boundary}\\
			  \text{minimal hypersurface with respect to $\de \Nn$}
		\end{array}
	\right.\right\}. \een
\
\
Notice that the class $\mathfrak{M}$ is in general not closed under smooth graphical convergence with multiplicity one: easy examples of non-convex domains in $\R^3$ show that the limit of elements in $\mathfrak{M}$ may be not properly embedded, and in fact have a large contact set with the boundary of the ambient manifold. We shall introduce here the following general assumption:

\begin{center}
	$(\textbf{P})$ \ \ if $M\subset \mathcal{N}$  has zero mean curvature and meets the boundary of the ambient manifold orthogonally along its own boundary, then it is proper.
\end{center}

 For instance, it is readily seen via a standard application of the maximum principle that the assumption above is implied by this natural geometric requirement:
\begin{center}
	$(\textbf{C})$ \ \ $\partial \mathcal{N}$ is weakly mean convex and has no minimal component.	
\end{center}

% There are, however, natural geometric assumptions one can impose on the boundary of the ambient manifold in order to rule out such behaviour, and we shall introduce the following one: 

%	\begin{center}
%		$(\textbf{C})$ \ \ $\partial \mathcal{N}$ is weakly mean convex and has no minimal component.	
%	\end{center}

\indent An element $M\in \mathfrak{M}$ can be either two-sided or one-sided, in other words the associated normal bundle can be either trivial or not respectively. Therefore, we further define  
$\widetilde{\mathfrak{M}}$ to be the class of immersed free boundary minimal hypersurfaces $\widetilde{M}$ that are the two-sided covering of some one-sided $M\in \mathfrak{M}$. One can check that $\widetilde{M}$ always exists given a one-sided $M\in \mathfrak{M}$, that these submanifolds are connected, and the covering map gives rise to a two-sided free boundary, minimal, proper immersion of $\widetilde{M}$ into $\Nn$. The construction of $\widetilde{M}$ is analogous (and equivalent in the case that $\Nn$ is orientable) to the construction of the orientable double cover of a non-orientable manifold. 

Within the class $\mathfrak{M}$, we denote by $\lambda_p \in \R$ the value of the $p$-th eigenvalue of the stability operator of a given element $M\in \mathfrak{M}$ (see Subsection \ref{subs:spectrum}) and consider for given $\mu, \Lambda  \in \R_{\geq 0}$, $p\in \N$ the subset
\[
\mathfrak{M}_p(\Lambda, \mu) :=\{M\in \mathfrak{M}\ | \ \text{$\lambda_p(M)\geq -\mu$ and $\h^n(M) \leq \Lambda$} \}.
\]
We let $index(M) = \max\{k\ | \ \lambda_k <0\}$ and $nullity(M)$ be the dimension of the eigenspace associated to the zero eigenvalue. When $M\in \mathfrak{M}$ has index (resp. nullity) zero we shall say it is a stable (resp. non-degenerate), free boundary minimal hypersurface.

Our first result asserts that a uniform lower bound on \textsl{some} eigenvalue of the Jacobi operator together with a uniform upper bound on the area is sufficient for a weak compactness result, in the sense of graphical but possibly multi-sheeted convergence away from finitely many points where necks (or half-necks, at boundary points) may form.

\begin{thm}\label{thm_weak_comp}
Let $2\leq n\leq 6$ and $(\Nn^{n+1},g)$ be a compact Riemannian manifold with boundary satisfying the assumption $(\textbf{P})$. For fixed $\Lambda, \mu\in \R_{\geq 0}$ and $p\in \N$, suppose that $\{M_k\}$ is a sequence in $\mathfrak{M}_p(\Lambda, \mu)$. Then there exist $M\in \mathfrak{M}_p(\Lambda, \mu)$, $m\in \mathbb{N}$ and a finite set $\mathcal{Y}\In M$ with cardinality $|\mathcal{Y}|\leq p-1$  such that, up to subsequence, $M_k \to M$ locally smoothly and graphically on $M\sm \mathcal{Y}$ with multiplicity $m$.  %Furthermore, if $m=1$ then $\mathcal{Y}=\emptyset$. 
\end{thm}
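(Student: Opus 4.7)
The plan is to adapt to the free boundary setting the curvature-estimate plus point-concentration scheme developed for closed minimal hypersurfaces, leveraging the Schoen--Simon regular regime $2 \leq n \leq 6$. The uniform area bound $\h^n(M_k) \leq \Lambda$ together with the free boundary stationarity of each $M_k$ allows one to extract, via Allard's free boundary compactness theorem, a subsequence converging in the varifold topology to a stationary integral varifold $V$. The task is to upgrade this weak limit to smooth multi-sheeted graphical convergence away from a finite exceptional set; assumption $(\textbf{P})$ will ensure at the end that the resulting smooth limit is properly embedded.

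Fix an $\ve$-regularity threshold $\ve_0 > 0$ and introduce the concentration set
\[
\mathcal{Y} := \left\{ x \in \Nn \ \Big|\ \liminf_{k \to \infty} \int_{M_k \cap B_r(x)} |A_{M_k}|^2 \, d\h^n \geq \ve_0 \text{ for every } r > 0 \right\}.
\]
At every $x \notin \mathcal{Y}$ one finds a ball on which $M_k$ has arbitrarily small total curvature along a subsequence; combined with Schoen--Simon $\ve$-regularity (interior in the bulk, and a boundary version near $\p \Nn$ obtained by reflection in Fermi coordinates), this yields uniform $C^{2,\alpha}$ bounds, and a standard covering and diagonal extraction produces a smooth embedded limit hypersurface $M$ on $\Nn \setminus \mathcal{Y}$ along which $M_k \to M$ graphically and smoothly. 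By monotonicity and connectedness of $M$, the local sheet-multiplicity is a single constant $m \in \N$, and a capacity argument removes the singularities at $\mathcal{Y}$, extending $M$ smoothly across them.

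To bound $|\mathcal{Y}| \leq p - 1$, suppose for contradiction that $\mathcal{Y}$ contains $p$ distinct points $x_1, \dots, x_p$. Choose pairwise disjoint balls $B_{\rho_j}(x_j)$ with $\rho_j$ small; by definition of $\mathcal{Y}$ one has $\int_{M_k \cap B_{\rho_j}(x_j)} |A_{M_k}|^2 \, d\h^n \geq \ve_0/2$ for all large $k$. A standard cut-off construction (plateau type for $n \geq 3$, logarithmic for $n = 2$) produces test functions $\phi_{j,k}$ compactly supported in $B_{\rho_j}(x_j) \cap M_k$ for which the free boundary Jacobi form
\[
Q_{M_k}(\phi,\phi) = \int_{M_k} \!\left(|\n \phi|^2 - (|A_{M_k}|^2 + \mathrm{Ric}_g(\nu,\nu))\phi^2\right) d\h^n - \int_{\p M_k} II^{\p \Nn}(\nu,\nu) \phi^2 \, d\h^{n-1}
\]
has Rayleigh quotient at most $-(\mu + 1)$: the $-|A|^2 \phi^2$ term, integrated on the tiny high-curvature region, dominates both the Dirichlet energy (of order $\rho_j^{n-2}$, or $|\log \rho_j|^{-1}$ if $n=2$) and the lower-order ambient curvature contributions, once divided by $\|\phi_{j,k}\|_{L^2}^2 = O(\rho_j^n)$. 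Since the $\phi_{j,k}$ have pairwise disjoint supports, they span a $p$-dimensional subspace on which $Q_{M_k}$ has Rayleigh quotient strictly below $-\mu$, so by the min-max characterisation $\lambda_p(M_k) < -\mu$, a contradiction.

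It remains to check $M \in \mathfrak{M}_p(\Lambda, \mu)$: properness follows from assumption $(\textbf{P})$; the area bound passes to the limit via the monotonicity formula and the multiplicity factor $m$; and $\lambda_p(M) \geq -\mu$ follows from lower semicontinuity of $\lambda_p$ under smooth convergence, with $\mathcal{Y}$ removable in the quadratic form by the same capacity argument used above. The main technical obstacle is the free boundary analogue of the Schoen--Simon $\ve$-regularity and of the cut-off construction at points of $\mathcal{Y} \cap \p \Nn$: unlike in the closed case one cannot simply double across $\p \Nn$, which is not assumed to be totally geodesic, so a careful analysis in Fermi coordinates is required to incorporate the second fundamental form of $\p \Nn$ into the boundary term of $Q_{M_k}$ and to propagate the regularity estimates up to the free boundary.
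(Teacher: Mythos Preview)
Your approach differs from the paper's in a substantive way. The paper does \emph{not} define the bad set $\mathcal{Y}$ via concentration of $\int |A|^2$; instead it characterises bad points directly through the first eigenvalue, showing
\[
\mathcal{Y} \subset \bigl\{x \in \mathcal{N} : \text{for all } \varepsilon>0,\ \limsup_k \lambda_1(M_k\cap B_\varepsilon(x)) < -\mu\bigr\},
\]
using the Schoen--Simon curvature estimate in the form ``$\lambda_1\geq -\mu$ on a ball $\Rightarrow$ pointwise curvature bound''. The inequality $|\mathcal{Y}|\leq p-1$ then falls out immediately from the domain monotonicity of eigenvalues, with no test functions needed. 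Your route via curvature concentration and hand-built Rayleigh quotients can be made to work, but it is considerably more laborious and obscures the link between the hypothesis $\lambda_p\geq -\mu$ and the structure of $\mathcal{Y}$.

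There are two genuine gaps. First, what you call ``Schoen--Simon $\varepsilon$-regularity'' is not a statement about small $\int |A|^2$; the Schoen--Simon theorem gives pointwise curvature bounds from \emph{stability} (or a lower bound on $\lambda_1$) together with an area bound, and this is precisely how the paper invokes it. For $n\geq 3$ the quantity $\int |A|^2$ is subcritical, and while one can recover uniform curvature bounds away from your $\mathcal{Y}$ by a separate blow-up argument, this has nothing to do with Schoen--Simon and you would need to supply it (including the boundary case).

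Second, and more seriously, the ``capacity argument'' you propose does not remove the point singularities of the limit. Capacity considerations show only that the limit varifold is \emph{stationary} across $\mathcal{Y}$; they say nothing about smoothness, and a stationary varifold with an isolated singular point can perfectly well be a non-flat cone. The paper handles this with a dedicated removable singularity theorem: near each $y\in\mathcal{Y}$ one first proves $\lambda_1(M_0\cap B_\varepsilon(y))\geq -\mu$ for small $\varepsilon$ (via a pigeonhole on disjoint annuli, again using $\lambda_p(M_k)\geq -\mu$ and smooth convergence away from $y$), and then runs a blow-up at $y$ which, by the $\lambda_1$ bound, produces a \emph{stable} minimal cone in $\mathbb{R}^{n+1}$ or $\mathbb{R}^{n+1}_+$; Simons' theorem in dimension $n\leq 6$ forces this cone to be a (half-)plane, whence smoothness. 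This is where the dimension restriction truly enters, and it is entirely absent from your outline. The boundary case $y\in\partial\mathcal{N}$ requires further care (a separate boundary removable singularity theorem and a case analysis depending on whether $\partial M_0$ accumulates at $y$), which the paper carries out in detail.
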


\begin{rmk}\label{rem:massbound}
The multiplicity $m$ satisfies $m\leq \frac{\Lambda}{\eps(\Nn)}$ where $\eps(\Nn) =\inf\{\h^n(M)|M\in \mathfrak{M}\}$. The constant $\eps(\Nn)$ is positive since $(\Nn,g)$ is compact and a monotonicity formula holds, see Subsection \ref{subs:curvest}.  
\end{rmk}

\begin{rmk}\label{rem:extension}
If assumption $(\textbf{P})$ is dropped, it is still possible to show that the sequence $\left\{M_k\right\}$ subconverges to a limit hypersurface, which is smooth, minimal and meets the boundary of the ambient manifold orthogonally along its own boundary, but which may fail in general to be proper, as it may have an interior contact set with $\partial\mathcal{N}$, as discussed above. We refer the reader to Section \ref{sec:thm2} for a broader discussion and for the proof of the more general Theorem \ref{thm_weak_comp_ext}, from which Theorem \ref{thm_weak_comp} follows at once.  	
\end{rmk}

A version of the theorem below appeared in \cite{Sha15}, assuming a uniform upper bound on the Morse index, and later in \cite{ACS15} assuming a lower bound on the $p$-the eigenvalue of the Jacobi operator, for what concerns the case of \textsl{closed} minimal hypersurfaces (see also \cite{A16}, Proposition 4.1). Here we treat the general case, in arbitrary dimension $n\geq 2$, describing what happens when a properly embedded minimal hypersurface is the limit  of free boundary minimal hypersurfaces in the sense stated in Theorem \ref{thm_weak_comp}. In order to state the remaining results concisely we introduce some notation; given two smooth manifolds $M_1, M_2$ (possibly with non-empty boundary), we shall write $M_1\simeq M_2$ if they are diffeomorphic, and for a subset $\mathfrak{C}\subset\mathfrak{M}$ we let $\mathfrak{C}/\simeq $ denote the set of corresponding equivalence classes modulo diffeomorphisms.

\begin{thm}\label{thm_mult_anal}
Let $n\geq 2$ and $(\Nn^{n+1}, g)$ be a compact Riemannian manifold with boundary. Suppose that $\{M_k\}$ is a sequence in $\mathfrak{M}$ such that there exists some $M\in \mathfrak{M}$ and a finite set $\mathcal{Y}\In M$ with $M_k\to M$ locally smoothly and graphically on $M\sm \mathcal{Y}$ with multiplicity $m \in \N$. If $M_k\neq M$ eventually, then either $nullity(M)\geq 1$ or if $M$ is one-sided, $nullity(\widetilde{M})\geq 1$ where $\widetilde{M}$ is the two-sided immersion associated to $M$. Furthermore:
\begin{enumerate}
\item If $M$ is two-sided 
\begin{enumerate}[i)]
\item $m=1$ if and only if $\mathcal{Y}=\emptyset$, and $M_k\simeq M$ eventually
\item $m\geq 2$ if and only if $\mathcal{Y}\neq\emptyset$, and $M$ is stable with $nullity(M) =1$. 
\end{enumerate}
\item If $M$ is one-sided 
 \begin{enumerate}[i)]
  \item $m=1$ implies $\mathcal{Y}=\emptyset$ and $M_k\simeq M$ eventually
  \item $m\geq2$ implies $\widetilde{M}$ is stable, $nullity(\widetilde{M}) =1$ and $\lambda_1(M) >0$. In this case $\mathcal{Y}=\emptyset$ implies $m=2$ and $M_k\simeq \widetilde{M}$ eventually.  
 % \item   {\textbf{\emph{*We might expect that $m=2$ implies $\mathcal{Y} = \emptyset$ and $M_k\simeq \widetilde{M}$ eventually. Thus $l\geq 2$ if and only if $\mathcal{Y}\neq \emptyset$.*}}}
\end{enumerate}
\end{enumerate}
\end{thm}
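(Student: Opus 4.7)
The plan is to analyse the convergence $M_k\to M$ via normal graphs, extract a nonnegative Jacobi field on $M$ (or on $\widetilde M$) from differences of ordered sheets, and then combine the maximum principle with simplicity of the first eigenvalue of the Jacobi operator. First I would fix a tubular neighbourhood $T_\varepsilon(M)$ of $M$ in $\mathcal N$ parametrised by the normal exponential map. Smooth graphical convergence with multiplicity $m$ on a compact $K\subset M\setminus\mathcal Y$ means that for $k$ large, $M_k\cap T_\varepsilon(K)$ is the union of $m$ ordered normal graphs $u_k^1\leq u_k^2\leq\cdots\leq u_k^m$ over $K$, each tending to $0$ smoothly. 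Each $u_k^j$ satisfies the minimal surface equation on $M\setminus\mathcal Y$ together with a nonlinear Robin-type boundary condition along $\partial M\setminus\mathcal Y$ coming from the orthogonal intersection with $\partial\mathcal N$; the linearisation of both is the Jacobi operator $L_M = -\Delta_M - (|A_M|^2+\mathrm{Ric}_{\mathcal N}(\nu,\nu))$ paired with the Robin condition $u\mapsto \partial_\eta u - \mathrm{II}^{\partial\mathcal N}(\nu,\nu)u$. In the one-sided case the same construction is carried out on the two-sided double cover $\widetilde M$.

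For $m\geq 2$ I would set $w_k := u_k^2-u_k^1\geq 0$ and subtract the PDEs for the two graphs to obtain a linear elliptic equation
\[
a_k^{ij}\partial_i\partial_j w_k + b_k^i\partial_i w_k + c_k w_k = 0
\]
together with a linear Robin boundary condition, whose coefficients converge smoothly on compact subsets of $M\setminus\mathcal Y$ to those of $L_M$ and of the above Robin form. Normalising $\widetilde w_k := w_k/\|w_k\|_{C^0(M\setminus U_\delta(\mathcal Y))}$ for a small fixed $\delta>0$ and applying interior and boundary Schauder estimates together with Arzel\`a--Ascoli, I would pass to a subsequential smooth limit $\widetilde w\geq 0$ on $M\setminus\mathcal Y$ satisfying $L_M\widetilde w = 0$ with the Robin condition and $\|\widetilde w\|_{C^0(M\setminus U_\delta(\mathcal Y))}=1$. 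Since $\mathcal Y$ is finite (hence of codimension $n\geq 2$) and $\widetilde w$ is bounded, standard removable singularity for linear elliptic equations extends $\widetilde w$ smoothly across $\mathcal Y$. The strong maximum principle together with the Hopf lemma adapted to the Robin condition then upgrade $\widetilde w\geq 0$ to $\widetilde w>0$. Because a positive eigenfunction of a self-adjoint elliptic operator is necessarily the unique first eigenfunction (up to scalar), this forces $\lambda_1(M)=0$ and $\mathrm{nullity}(M)=1$; in particular $M$ is stable. The case $m=1$ with $M_k\neq M$ is handled by the same argument applied to the single graph $u_k$: no sign is available a priori, but one still extracts a nontrivial Jacobi field and concludes $\mathrm{nullity}(M)\geq 1$.

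The $\mathcal Y$-dichotomy and topology claims follow from the graphical picture. If $\mathcal Y=\emptyset$, convergence is smooth on all of $M$, so $M_k$ consists of $m$ disjoint smooth normal graphs; in the two-sided case connectedness of $M_k$ forces $m=1$ and $M_k\simeq M$ via the graph diffeomorphism. Conversely $m\geq 2$ with connected $M_k$ requires the sheets to be joined by degenerating necks, giving $\mathcal Y\neq\emptyset$. For the one-sided case, applying the extraction step on $\widetilde M$ yields $\widetilde w>0$ and hence $\widetilde M$ stable with $\mathrm{nullity}(\widetilde M)=1$. Uniqueness of positive first eigenfunctions forces $\widetilde w$ to be invariant under the deck transformation $\sigma$, since $\sigma^*\widetilde w$ is also a positive first eigenfunction with the same eigenvalue. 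The spectrum of $L_{\widetilde M}$ splits into $\sigma$-invariant and $\sigma$-anti-invariant parts, and the latter is by definition the spectrum of $L_M$ on the M\"obius-type normal line bundle of $M$; since the unique kernel direction is $\sigma$-invariant, the anti-invariant spectrum has no zero, giving $\lambda_1(M)>0$. Finally, if $\mathcal Y=\emptyset$ in the one-sided case with $m\geq 2$, the $\sigma$-action on the $m$ disjoint sheets over $\widetilde M$ must pair them up (a $\sigma$-invariant sheet would yield a separate one-sided component), and connectedness of $M_k$ forces exactly one pair, so $m=2$ and $M_k\simeq\widetilde M$.

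The main technical obstacle I anticipate is ensuring non-degeneracy of $\widetilde w$: a priori the sup of $w_k$ could concentrate at points of $\mathcal Y$, leaving $\widetilde w\equiv 0$ after removability. This is circumvented by normalising the sup-norm on $M\setminus U_\delta(\mathcal Y)$ rather than on all of $M$, and then applying interior and boundary Schauder theory uniformly on compact subsets of $M\setminus\mathcal Y$. A secondary delicate point is the interaction of the Hopf lemma with the Robin boundary condition near points of $\mathcal Y\cap\partial M$; this is again handled by first working on $M\setminus\mathcal Y$ and then pushing across $\mathcal Y$ using the linearity and uniform boundedness of the coefficients.
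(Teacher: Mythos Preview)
Your overall strategy is correct and matches the paper's, but there is a genuine gap at the removable singularity step. You assert ``since $\mathcal Y$ is finite (hence of codimension $n\geq 2$) and $\widetilde w$ is bounded, standard removable singularity for linear elliptic equations extends $\widetilde w$ smoothly across $\mathcal Y$'' --- but you have not established that $\widetilde w$ is bounded near $\mathcal Y$. Your normalisation on $M\setminus U_\delta(\mathcal Y)$ guarantees non-triviality of the limit, and Harnack propagates bounds to any fixed compact subset of $M\setminus\mathcal Y$, but the Harnack constant degenerates as the compact set exhausts $M\setminus\mathcal Y$. A nonnegative solution of a uniformly elliptic equation on a punctured ball need \emph{not} be bounded: the Green's function $|x|^{2-n}$ for $\Delta$ is the model obstruction, and adding a bounded zeroth-order term does not help. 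So the self-identified obstacle you mention (the limit vanishing identically) is not the real danger; the real danger is the opposite, that $\widetilde w$ blows up at $\mathcal Y$.

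The paper closes this gap with a geometric barrier argument that has no counterpart in your proposal. Near each $y\in\mathcal Y$ it constructs (Section~\ref{app:fol}, Proposition~\ref{prop:foliation}) a local foliation of a neighbourhood by free boundary minimal leaves, parametrised by their boundary values on an outer wall $\Gamma_2$ lying at definite distance from $y$. Feeding in the boundary values of the top and bottom sheets $u_k^m, u_k^1$ on $\Gamma_2$ produces two leaves, and the maximum principle (applied to the leaves against $M_k$) traps $M_k\cap C^r_\theta$ between them. Together with Remark~\ref{rem:comp} this yields a uniform estimate $\sup_{D_k}\tilde h_k \leq C\sup_{\Gamma_2}\tilde h_k$ with $C$ independent of $k$, which is exactly the missing bound. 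Note this argument must be run on the $h_k$ \emph{before} passing to the limit, since it uses that the sheets are genuine minimal hypersurfaces.

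A second, smaller gap: you prove $\mathcal Y=\emptyset\Rightarrow m=1$ (for $M$ two-sided) by connectedness, but you do not argue the converse $m=1\Rightarrow \mathcal Y=\emptyset$. This is not automatic from the graphical picture; the paper obtains it from the smooth version of Allard's theorem (Theorem~\ref{thm_bound_allard}): proper embeddedness of $M$ gives density ratios close to $1$ (respectively $\tfrac12$ at boundary points), measure convergence transfers this to $M_k$, and the resulting uniform curvature bound rules out any bad set. Your remaining points --- the one-sided reduction via the double cover, the deduction $\lambda_1(M)>0$ from simplicity of the first eigenfunction on $\widetilde M$, and the $m=2$ conclusion when $\mathcal Y=\emptyset$ --- are correct and agree with the paper.
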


\begin{rmk} 
Notice that when one allows limit hypersurfaces that are not properly embedded, it is not possible to rule out the case $m=1$ and $\mathcal{Y}\neq \emptyset$. The local picture of what goes wrong is provided by half a catenoid in $\R^{n+1}$, that is free boundary with respect to $\{x^{n+1}=0\}$ (the boundary in this case being an $n-1$-dimensional sphere), which is blown down to converge with multiplicity one to the plane $\{x^{n+1}=0\}$, with the set $\mathcal{Y}$ consisting of one point, namely the origin.
\end{rmk}

A first consequence, of wide geometric applicability, is the following.

\begin{cor}\label{cor_strong_comp1}
Let $2\leq n\leq 6$ and $(\Nn^{n+1},g)$ a compact Riemannian manifold with boundary satisfying the assumption $(\textbf{P})$. Suppose that every  properly embedded free boundary minimal hypersurface is unstable. Then the corresponding class $\mathfrak{M}_p(\Lambda, \mu)$ is sequentially compact in the smooth topology of single-sheeted graphical convergence and thus the quotient $\mathfrak{M}_p(\Lambda, \mu)/\simeq$ only consists of finitely many equivalence classes.
In particular, such conclusion holds when $\Nn$ satisfies either 
\begin{enumerate}[i)]
	\item $Ric_\mathcal{N} \geq 0$ with $\de \Nn$ strictly convex, or
	\item $Ric_\mathcal{N} >0$ with $\de \Nn$ weakly convex and strictly mean convex.  
\end{enumerate}
\end{cor}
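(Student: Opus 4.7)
The strategy is to feed the given sequence $\{M_k\} \subset \mathfrak{M}_p(\Lambda, \mu)$ through the weak compactness Theorem \ref{thm_weak_comp} and then use the instability hypothesis to rule out multi-sheeted convergence via the dichotomy of Theorem \ref{thm_mult_anal}.

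Concretely, Theorem \ref{thm_weak_comp} extracts a subsequence and produces a limit $M \in \mathfrak{M}_p(\Lambda, \mu)$, a multiplicity $m \in \mathbb{N}$, and a finite set $\mathcal{Y} \subset M$ on whose complement the convergence is smooth, graphical, and $m$-sheeted. Unless $M_k = M$ for infinitely many $k$ (in which case we are trivially done), we invoke Theorem \ref{thm_mult_anal}. The key observation is that the standing hypothesis ``every $M \in \mathfrak{M}$ is unstable'' (i.e.\ $\lambda_1(M) < 0$) is directly incompatible with the conclusions of Theorem \ref{thm_mult_anal} in each of the cases $m \geq 2$: if $M$ is two-sided, case $(1)(ii)$ asserts $M$ is stable, hence $\lambda_1(M) \geq 0$; if $M$ is one-sided, case $(2)(ii)$ asserts $\lambda_1(M) > 0$. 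Both are excluded by instability, so we must have $m = 1$, and cases $(1)(i)$ or $(2)(i)$ then force $\mathcal{Y} = \emptyset$ together with $M_k \simeq M$ eventually. The finiteness of the quotient $\mathfrak{M}_p(\Lambda,\mu)/\simeq$ follows by a contradiction argument: an infinite family of pairwise non-diffeomorphic elements would yield, by the previous step, a subsequence eventually diffeomorphic to some fixed $M$, contradicting non-diffeomorphism.

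It remains to check that each of the geometric conditions (i), (ii) implies that every free boundary minimal hypersurface is unstable. The standard device is to insert $\phi \equiv 1$ into the free boundary second variation of area on a two-sided $M$, obtaining
\begin{align*}
Q(1) \;=\; -\int_M \bigl(|A|^2 + Ric_{\mathcal{N}}(\nu,\nu)\bigr)\, dV_M \;-\; \int_{\partial M} II^{\partial\mathcal{N}}(\nu,\nu)\, dV_{\partial M}.
\end{align*}
Under (ii), the interior term is strictly negative by $Ric_{\mathcal{N}} > 0$, giving $Q(1) < 0$; under (i), the boundary term is strictly negative by strict convexity of $\partial\mathcal{N}$ whenever $\partial M \neq \emptyset$, while the degenerate scenario of a closed totally geodesic hypersurface in a Ricci-flat region is incompatible with the strict convexity of $\partial\mathcal{N}$ via a standard maximum principle. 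In either case $\lambda_1(M) < 0$. For one-sided $M$, the same computation is performed on the two-sided cover $\widetilde{M}$, and one uses the decomposition of the Jacobi spectrum into invariant and anti-invariant parts with respect to the deck transformation to transfer the conclusion to $\lambda_1(M)$.

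The principal technical difficulty of the whole argument, namely the careful analysis of how interior and boundary necks can form and how this interacts with the stability of the limit, is already packaged inside Theorem \ref{thm_mult_anal}. Once that rigidity is in hand, the role of the instability hypothesis is exactly to cut off the one unwanted branch of the dichotomy, and the remaining bookkeeping (trivial convergence, diffeomorphism, finiteness of quotient) is routine.
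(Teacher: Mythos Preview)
Your main argument is correct and is exactly the route the paper intends (the corollary is stated without proof as a direct consequence of Theorems \ref{thm_weak_comp} and \ref{thm_mult_anal}).

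One point in the ``in particular'' part deserves correction. For one-sided $M$, the spectral transfer you sketch does not work: the constant function $1$ on $\widetilde{M}$ is \emph{invariant} under the deck transformation, so $Q_{\widetilde{M}}(1,1)<0$ shows only that $\lambda_1(\widetilde{M})<0$, not that the first \emph{anti-invariant} eigenvalue (which is $\lambda_1(M)$) is negative. The decomposition into invariant and anti-invariant parts gives no mechanism for moving negativity from one summand to the other. The clean fix is not to verify the abstract instability hypothesis for one-sided $M$ at all, but to observe that Theorem \ref{thm_mult_anal}(2)(ii) already asserts that $\widetilde{M}$ is stable when $m\geq 2$; the inequality $Q_{\widetilde{M}}(1,1)<0$ under (i) or (ii) contradicts this directly. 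In other words, for the ``in particular'' clause one should argue that conditions (i)/(ii) rule out the specific stability conclusions of Theorem \ref{thm_mult_anal} in the $m\geq 2$ branches, rather than attempting to establish $\lambda_1(M)<0$ for every $M\in\mathfrak{M}$.
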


In particular, when $n=2$ and $(\mathcal{N}^3,g)$ satisfies assumption \textsl{i)} this result (together with Theorem \ref{thm:3dfraserli}) implies the equivalence for a subclass $\mathfrak{M}'$ of $\mathfrak{M}$ of the following two statements:
\begin{enumerate}
\item[a)]{$\mathfrak{M}'/\simeq$ is finite, namely $\mathfrak{M}'$ contains elements belonging to finitely many diffeomorphism classes;}
\item[b)]{for any integer $p\geq 1$ there exist constants $\Lambda_p, \mu_p\in\mathbb{R}_{\geq 0}$ (both depending on $p$) such that $\mathfrak{M}'\subset\mathfrak{M}_{p}(\Lambda_p,\mu_p)$.}
\end{enumerate}	 
It follows that, in manifolds of non-negative Ricci curvature and strictly convex boundary, one can regard Corollary \ref{cor_strong_comp1} as a strong compactness theorem which suitably extends Theorem \ref{thm:3dfraserli} up to ambient dimension seven. \\

\indent Furthermore, we can derive a finiteness result when the ambient manifold is known not to admit degenerate minimal hypersurfaces:

\begin{cor}\label{cor_generic}
Let $2\leq n\leq 6$ and $(\Nn^{n+1},g)$ be a compact Riemannian manifold with boundary satisfying the assumption $(\textbf{P})$.  Suppose that for all $M\in \mathfrak{M}$ and $\widetilde{M}\in \widetilde{\mathfrak{M}}$ there exist no non-trivial Jacobi fields over $M$ or $\widetilde{M}$. Then $\mathfrak{M}_p(\Lambda, \mu)$ contains finitely many elements.  
\end{cor}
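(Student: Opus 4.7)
The plan is to argue by contradiction, leveraging the weak compactness of Theorem \ref{thm_weak_comp} together with the multiplicity analysis of Theorem \ref{thm_mult_anal}. Suppose, toward a contradiction, that $\mathfrak{M}_p(\Lambda, \mu)$ contains a sequence $\{M_k\}_{k \in \mathbb{N}}$ of pairwise distinct elements. Since $2 \leq n \leq 6$ and $(\mathcal{N}, g)$ satisfies assumption $(\textbf{P})$, Theorem \ref{thm_weak_comp} applies: after extracting a subsequence, there exist $M \in \mathfrak{M}_p(\Lambda, \mu)$, a multiplicity $m \in \mathbb{N}$, and a finite set $\mathcal{Y} \subset M$ with $|\mathcal{Y}| \leq p-1$, such that $M_k \to M$ locally smoothly and graphically on $M \setminus \mathcal{Y}$ with multiplicity $m$.

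Because the $M_k$ are pairwise distinct as hypersurfaces in $\mathcal{N}$, at most one of them can coincide with $M$; discarding that single term if necessary, we may assume $M_k \neq M$ for every $k$. Theorem \ref{thm_mult_anal} then forces either $nullity(M) \geq 1$, or $M$ to be one-sided with $nullity(\widetilde{M}) \geq 1$ for its two-sided cover $\widetilde{M} \in \widetilde{\mathfrak{M}}$. In either alternative, some element of $\mathfrak{M} \cup \widetilde{\mathfrak{M}}$ carries a non-trivial Jacobi field, contradicting the non-degeneracy hypothesis. Hence no such infinite sequence exists, and $\mathfrak{M}_p(\Lambda, \mu)$ must be finite.

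With Theorems \ref{thm_weak_comp} and \ref{thm_mult_anal} already in hand, the deduction is essentially mechanical: the only step requiring a brief verification is the passage from \emph{``the $M_k$ are pairwise distinct''} to \emph{``$M_k \neq M$ eventually,''} which is trivial since a fixed subset $M \subset \mathcal{N}$ can equal at most one term of a sequence of pairwise distinct subsets. I therefore do not anticipate any real obstacle beyond the clean invocation of the two preceding theorems; all the genuine analytic difficulty (curvature estimates, the bound $|\mathcal{Y}| \leq p-1$ from the eigenvalue hypothesis, the dichotomy between one-sheeted and multi-sheeted convergence, and the production of a Jacobi field from a non-trivial subsequence) has already been absorbed into those statements.
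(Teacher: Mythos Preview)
Your argument is correct and is exactly the intended deduction: the paper presents this corollary as an immediate consequence of Theorems \ref{thm_weak_comp} and \ref{thm_mult_anal} without spelling out a separate proof, and your contradiction argument is precisely how one extracts the finiteness conclusion from those two statements.
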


Of course, one may wonder whether the condition we need to assume in the statement above, namely the condition 

\begin{center}
\textsl{for all $M\in \mathfrak{M}$ and $\widetilde{M}\in \widetilde{\mathfrak{M}}$ there exist no non-trivial Jacobi fields over $M$ or $\widetilde{M}$}
\end{center}
is generically satisfied with respect to the background metric we endow $\mathcal{N}$ with. This is a question of independent interest, which we answer by proving a suitable `bumpy metric theorem' in the category under consideration. In the statement below, we let $\Gamma^{q}$ be the set of $C^{q}$ metrics on $\mathcal{N}$ endowed with the $C^{q}$ topology.

 \begin{thm}\label{thm:bumpy} 
 	Let $\mathcal{N}^{n+1}$ be a smooth, compact, connected manifold with non-empty boundary, and $q$ denote a positive integer $\geq 3$, or $\infty$. \\
 	\indent Let $\mathcal{B}^{q}$ be the subset of metrics $g$ in $\Gamma^{q}$ defined by the following property: no compact smooth manifolds with boundary that are $C^{q}$ properly embedded as free boundary minimal hypersurfaces in $(\mathcal{N},g)$, and no finite covers of any such hypersurface, admit a non-trivial Jacobi field. Then $\mathcal{B}^q$ is a comeagre subset of $\Gamma^{q}$. 
 \end{thm}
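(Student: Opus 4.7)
The plan is to adapt White's bumpy metric theorem to the free boundary setting, via a Sard-Smale argument applied to the projection from a space of pairs (metric, free boundary minimal hypersurface) onto the space of metrics. The first step is to reduce the comeagre claim to a countable intersection of open and dense subsets. For each diffeomorphism class $S$ of smooth compact $n$-manifolds with boundary, each integer $d \geq 1$, and each pair of integers $K, L \geq 1$, let $\mathcal{B}^q(S,d,K,L) \subset \Gamma^q$ denote the set of metrics $g$ such that no $d$-fold cover of a properly embedded free boundary minimal hypersurface $M \in \mathfrak{M}$ with $M \simeq S$, $\mathcal{H}^n(M) \leq K$, and $\sup_M |A_M| \leq L$ admits a non-trivial Jacobi field. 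Since $n$-manifolds with boundary have countably many diffeomorphism classes, $\mathcal{B}^q = \bigcap \mathcal{B}^q(S,d,K,L)$ over a countable index set, and by Baire it suffices to show each $\mathcal{B}^q(S,d,K,L)$ is open and dense.

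Openness follows from the compactness results of this paper: given $g_j \to g$ in $\Gamma^q$ with $g_j \notin \mathcal{B}^q(S,d,K,L)$, witnessed by degenerate hypersurfaces $M_j$, the uniform area and curvature bounds produce a smooth graphical limit $M$, a free boundary minimal hypersurface in $(\Nn, g)$ of the same topological type, and continuity of the Jacobi spectrum with Robin boundary condition forces $M$ to carry a non-trivial Jacobi field, so $g \notin \mathcal{B}^q(S,d,K,L)$.

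For density, fix $g_0 \in \Gamma^q$ and a model free boundary minimal hypersurface $M_0$ in $(\Nn, g_0)$ realising the prescribed bounds. Following White's scheme, we consider the space
\[
\mathcal{Z} := \{(g, u) \in U \times V : \mathrm{graph}_{M_0}(u) \text{ is free boundary minimal in } (\Nn, g)\},
\]
where $U$ is a $\Gamma^q$-neighbourhood of $g_0$ and $V$ a small $C^{k,\alpha}$-neighbourhood of $0$ in the space of normal sections of $M_0$. The pair (mean curvature of the graph, orthogonality defect along $\partial M_0$) defines a smooth map into a product of Banach spaces whose linearisation at $u = 0$ is the Jacobi operator on $M_0$ with Robin boundary condition, a self-adjoint Fredholm operator of index zero. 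The implicit function theorem then endows $\mathcal{Z}$ with the structure of a smooth Banach manifold on which the projection $\pi : \mathcal{Z} \to \Gamma^q$ is Fredholm of index zero, with cokernel at $(g,M)$ canonically identified with the space of Jacobi fields on $M$. The crux is transversality of $\pi$: given a non-trivial Jacobi field $\phi$ on $M$, one must produce a perturbation $\dot g \in T_g \Gamma^q$ whose induced linearised response pairs non-trivially with $\phi$ in $L^2$. Unique continuation for elliptic equations with Robin boundary condition ensures that $\phi$ does not vanish on any open subset of $M$, so one may select an interior point $p$ with $\phi(p) \neq 0$ and, for a local ambient extension of the unit normal $\nu$ and a small bump $\chi$ centred at $p$, take $\dot g = \chi\,(\nu \otimes \nu)$; a direct computation shows the resulting linear variation of the mean curvature is proportional to $\chi$, producing a non-degenerate pairing against $\phi$. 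Sard-Smale then yields that the regular values of $\pi$ form a comeagre subset of $U$, and these are precisely the metrics in $\mathcal{B}^q(S,d,K,L) \cap U$, establishing density. Finite covers, including the two-sided double cover $\widetilde{M} \in \widetilde{\mathfrak{M}}$ of a one-sided $M$, are handled by running the same argument with $M_0$ replaced by the relevant covering immersion.

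The hard part will be the transversality step, especially when $\phi$ concentrates near $\partial M$: there the free boundary condition brings the second fundamental form of $\partial \Nn$ into play via the Robin coefficient, and the interior bump perturbation may need to be replaced by a boundary-adapted metric variation that respects the admissibility of perturbations while producing the required response in both the bulk PDE and the boundary equation. Establishing a robust unique continuation principle for the coupled interior/boundary linearised problem, and verifying that such a boundary-adapted perturbation realises the pairing non-trivially, is the most technically delicate input needed to complete the scheme.
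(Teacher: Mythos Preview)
Your overall strategy—constructing a Banach manifold of pairs $(g,M)$ and applying Sard--Smale to the projection—is indeed the paper's approach, but your implementation has genuine gaps that prevent it from working as written.

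\textbf{The density argument is too local.} You fix one hypersurface $M_0$ and build a local space $\mathcal{Z}$ of pairs $(g,u)$ near $(g_0,0)$. Sard--Smale on this local chart tells you that for a comeagre set of $g\in U$, every hypersurface \emph{near $M_0$} is non-degenerate. But to land in $\mathcal{B}^q(S,d,K,L)$ you must control \emph{all} free boundary minimal hypersurfaces in $(\mathcal{N},g)$ with the given bounds, not just those close to $M_0$; new ones can appear as $g$ varies. The paper circumvents this by constructing a \emph{global} separable Banach manifold $\mathcal{S}^{q,j,\alpha}$ of all pairs (the Structure Theorem) and exploiting local properness of the Fredholm projection $\pi$ to produce countably many closed nowhere-dense sets whose images cover the bad metrics. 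Your stratification by area and curvature could in principle be salvaged, since with those bounds the relevant slice of the moduli space is compact over a neighbourhood of metrics, but that requires an argument you have not given.

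\textbf{The treatment of covers is incorrect.} You propose to handle a $d$-fold cover $\widetilde{M}\to M$ by ``running the same argument with $M_0$ replaced by the relevant covering immersion.'' But $\widetilde{M}\to\mathcal{N}$ is an immersion, not an embedding, so it does not live in your space $\mathcal{Z}$; and a Jacobi field on $\widetilde{M}$ need not descend to one on $M$, so it is not detected as a critical point of $\pi$. The paper follows White's route instead: it shows that the set $\mathcal{C}^q_p$ of pairs $(\gamma,[w])$ whose $p$-fold covers carry Jacobi fields is closed with $\mathrm{Int}(\mathcal{C}^q_p)\subset\mathcal{C}^q$ (the critical set of $\pi$), via an interior metric deformation. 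Since $\pi$ is a local homeomorphism on the regular set, the image $\pi(\mathcal{C}^q_p\setminus\mathcal{C}^q)$ is then meagre.

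\textbf{The case $q=\infty$ is not addressed.} Sard--Smale requires a Banach manifold, so the argument runs for finite $q$ only. The paper handles $q=\infty$ by arranging the open dense sets $U^q_i\subset\Gamma^q$ to be \emph{nested}, $U^{q+1}_i=U^q_i\cap\Gamma^{q+1}$, and then showing that $U^\infty_i:=\bigcap_q(U^q_i\cap\Gamma^\infty)$ remains open and dense in $\Gamma^\infty$.

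Finally, your ``hard part'' worry is misplaced: unique continuation guarantees that any non-trivial Jacobi field is non-zero on an open interior set, so a metric perturbation supported in the interior of $\mathcal{N}$ (the paper uses White's conformal one) already suffices for transversality. No boundary-adapted perturbation is needed.
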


This is the free boundary counterpart of Theorem 2.2 in \cite{Whi91} (for finite $q$) and Theorem 2.1 in \cite{Whi15} (for $q=\infty$). 
The definition of comeagre set and an appropriate contextualization of this result are provided in the first part of Section \ref{sec:bumpy}, see in particular Remark \ref{rmk:Baire}. One can derive a direct geometric application by simply combining Theorem \ref{thm:bumpy} with Corollary \ref{cor_generic}.
 
 \begin{cor}\label{cor:mcbumpy}
Let $2\leq n\leq 6$ and $\Nn^{n+1}$ be a compact smooth manifold with boundary. For a generic choice $g$ in the class of Riemannian metrics such that the boundary $\partial\mathcal{N}$ is strictly mean convex, the subclass $\mathfrak{M}_p(\Lambda,\mu)$ of free boundary minimal hypersurfaces in $(\mathcal{N}^{n+1},g)$ only contains finitely many elements.
 \end{cor}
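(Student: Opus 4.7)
The plan is to deduce the corollary directly by combining Theorem \ref{thm:bumpy} (the bumpy metric theorem) with Corollary \ref{cor_generic} (finiteness under non-degeneracy and assumption $(\textbf{P})$). The argument should essentially unfold in three short steps.

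First, I would verify that strict mean convexity of $\partial\Nn$ implies assumption $(\textbf{P})$. A strictly mean convex boundary cannot contain a minimal component, so strict mean convexity is a stronger condition than $(\textbf{C})$, and the excerpt already observes that $(\textbf{C})$ implies $(\textbf{P})$ via a standard application of the maximum principle. Hence Corollary \ref{cor_generic} will apply at any strictly mean convex metric for which the non-degeneracy assumption also holds.

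Second, I would observe that the set $\mathcal{G}^q \subset \Gamma^q$ of $C^q$ metrics for which $\partial\Nn$ is strictly mean convex is \emph{open} in $\Gamma^q$ (the mean curvature of the fixed hypersurface $\partial\Nn$ depends continuously on $g$ in the $C^q$ topology, $q\geq 3$). Theorem \ref{thm:bumpy} provides a comeagre subset $\mathcal{B}^q \subseteq \Gamma^q$ of bumpy metrics, namely those metrics for which no element of $\mathfrak{M}$ or $\widetilde{\mathfrak{M}}$ carries a non-trivial Jacobi field. Writing $\mathcal{B}^q \supseteq \bigcap_j U_j$ with each $U_j$ open and dense in $\Gamma^q$, openness of $\mathcal{G}^q$ ensures that $U_j \cap \mathcal{G}^q$ is open and dense in $\mathcal{G}^q$ for every $j$, so $\mathcal{B}^q \cap \mathcal{G}^q$ is comeagre in $\mathcal{G}^q$.

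Third, for every $g \in \mathcal{B}^q \cap \mathcal{G}^q$ both hypotheses of Corollary \ref{cor_generic} are in force: $(\textbf{P})$ from strict mean convexity of $\partial\Nn$, and the non-degeneracy hypothesis from $g \in \mathcal{B}^q$. Corollary \ref{cor_generic} then yields that $\mathfrak{M}_p(\Lambda,\mu)$ has only finitely many elements, which is precisely the statement of the corollary. The only serious content in this deduction lies in the two ingredients being invoked, and the main obstacle among them is Theorem \ref{thm:bumpy} itself; once it is available, the present corollary reduces to the elementary maximum-principle observation above combined with a point-set topology argument about intersecting a comeagre subset with an open subset of $\Gamma^q$.
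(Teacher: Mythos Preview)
Your proposal is correct and matches the paper's approach: the paper simply states that the corollary follows ``by simply combining Theorem \ref{thm:bumpy} with Corollary \ref{cor_generic}'', and your three steps spell out exactly this combination, including the point-set topology needed to pass from comeagre in $\Gamma^q$ to comeagre in the open subset of strictly mean convex metrics.
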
	 

We now discuss some variations of the theorems we have presented above.

\begin{rmk}
Since, for $I\in \N$ we have
$$ \mathfrak{M}(\Lambda, I) := \mathfrak{M}_{I+1}(\Lambda,0)=\{ M\in \mathfrak{M} \ | \ \text{$index(M) \leq I$ and $\h^n(M) \leq \Lambda$} \}$$
then Theorem \ref{thm_weak_comp} (in fact, Theorem \ref{thm_weak_comp_ext}), and Corollaries \ref{cor_strong_comp1}, \ref{cor_generic} and \ref{cor:mcbumpy} all hold for the class $\mathfrak{M}(\Lambda, I)$ (for fixed $\Lambda \in \R_{\geq 0}$, $I\in \N$). 
\end{rmk}

A result of Brian White \cite[Theorem 2.1]{Whi09} states that if $\partial \Nn$ is mean convex then: $\Nn$ contains no closed smooth and embedded minimal hypersurface if and only if any smooth hypersurface in $\Nn$ with boundary satisfies an isoperimetric inequality (with a uniform constant), namely
there exists some $C=C(\Nn)$ such that for all free boundary minimal hypersurfaces in $\Nn$ we have 
\[
\h^n(M)\leq C\h^{n-1}(\de M).
\]
That is certainly the case for compact mean convex subdomains in $\R^{n+1}$ but there are of course many other interesting examples. Motivated by this result, we define 
\[\mathfrak{M}^\de_p(\Lambda, \mu) :=\{M\in \mathfrak{M} \ | \ \text{$\lambda_p(M)\geq -\mu$ and $\h^{n-1}(\de M) \leq \Lambda$} \}.
\] 
and similarly 
\[  \mathfrak{M}^{\de}(\Lambda, I) :=\mathfrak{M}^\de_{I+1}(\Lambda, 0)=\{ M\in \mathfrak{M} \ | \ \text{$index(M) \leq I$ and $\h^{n-1}(\partial M)\leq \Lambda$} \}.
\]

\begin{cor}
Let $2\leq n\leq 6$ and $(\Nn^{n+1},g)$ be a compact Riemannian manifold with boundary and assume it has mean convex boundary and contains no closed minimal hypersurface. Then Theorem \ref{thm_weak_comp} (in fact, Theorem \ref{thm_weak_comp_ext}) and Corollaries \ref{cor_strong_comp1}, \ref{cor_generic} and \ref{cor:mcbumpy} hold both for the class $\mathfrak{M}^\de_p(\Lambda, \mu)$ and $\mathfrak{M}^{\de}(\Lambda, I)$. 
%In particular, all these assertions hold true if  $(\Nn,g)$ has positive Ricci curvature and strictly mean convex boundary.
\end{cor}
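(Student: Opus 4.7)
The plan is to reduce the weak compactness statement and the subsequent corollaries to those already established for the classes $\mathfrak{M}_p(\Lambda,\mu)$ and $\mathfrak{M}(\Lambda, I)$, by using the cited inequality of White to convert a uniform bound on boundary $(n-1)$-volume into a uniform bound on $n$-volume.

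First I would verify that the standing hypotheses imply $(\textbf{P})$. Mean convexity of $\partial\mathcal{N}$ gives weak mean convexity at once. Moreover, if some connected component of $\partial\mathcal{N}$ were itself minimal, then it would be a closed embedded minimal hypersurface in $\mathcal{N}$, contradicting the no-closed-minimal-hypersurface assumption; hence $(\textbf{C})$ holds, and consequently so does $(\textbf{P})$.

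By White's theorem \cite{Whi09} there exists a constant $C = C(\mathcal{N},g) > 0$ such that every $M \in \mathfrak{M}$ satisfies $\h^n(M) \leq C\, \h^{n-1}(\partial M)$. This yields the inclusions
\[
\mathfrak{M}^{\de}_p(\Lambda,\mu) \subset \mathfrak{M}_p(C\Lambda,\mu), \qquad \mathfrak{M}^{\de}(\Lambda,I) \subset \mathfrak{M}(C\Lambda,I),
\]
so Theorem \ref{thm_weak_comp_ext}, applied with the enlarged area bound, produces for any sequence in $\mathfrak{M}^{\de}_p(\Lambda,\mu)$ a limit $M \in \mathfrak{M}$ together with multiplicity-$m$ graphical convergence away from a finite set of cardinality at most $p-1$. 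To identify $M$ as an element of $\mathfrak{M}^{\de}_p(\Lambda,\mu)$ rather than merely of the larger class, I would exploit semicontinuity of $(n-1)$-volume at the boundary: the locally smooth multi-sheeted convergence restricts to $\partial \mathcal{N}$ and yields
\[
m \cdot \h^{n-1}(\partial M) \leq \liminf_{k\to\infty} \h^{n-1}(\partial M_k) \leq \Lambda,
\]
so $\h^{n-1}(\partial M) \leq \Lambda$ and hence $M \in \mathfrak{M}^{\de}_p(\Lambda,\mu)$, as required.

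With this in place, Corollaries \ref{cor_strong_comp1}, \ref{cor_generic} and \ref{cor:mcbumpy} transfer verbatim from the area class to the boundary-area class, since their proofs depend only on the weak compactness input and on the validity of $(\textbf{P})$, both of which are now at our disposal. The main subtlety I foresee lies in the semicontinuity step: one must verify that the locally smooth multiplicity-$m$ convergence granted by Theorem \ref{thm_weak_comp_ext} induces the analogous convergence for the boundary traces $\partial M_k$ on $\partial\mathcal{N}$, including near the finitely many concentration points of $\mathcal{Y}$ that happen to sit on the boundary. This is not a substantial difficulty, since boundary regularity is part of the free-boundary compactness machinery, but it deserves to be spelled out to justify passing to the limit in the boundary-area estimate.
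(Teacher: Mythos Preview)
Your proposal is correct and follows exactly the route the paper intends: the corollary is stated without proof precisely because White's isoperimetric inequality immediately gives the inclusion $\mathfrak{M}^{\de}_p(\Lambda,\mu)\subset\mathfrak{M}_p(C\Lambda,\mu)$, after which all the cited results apply verbatim. Your added verifications (that the hypotheses force $(\textbf{C})$ and hence $(\textbf{P})$, and that the limit inherits the boundary-area bound via lower semicontinuity) are not spelled out in the paper but are the natural details one would supply; the semicontinuity step is indeed handled by the locally smooth graphical convergence up to the boundary together with the fact that the finite set $\mathcal{Y}$ has zero $\mathscr{H}^{n-1}$-measure.
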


In certain positively curved ambient three-manifolds, both the area bound and the bound on the length of the boundary can be dropped and a compactness result can be achieved by only assuming an upper bound on the Morse index. For instance, that is certainly the case when $(\mathcal{N},g)$ is a strictly convex domain in the Euclidean space, as a result of the combined application of the effective estimate proven in \cite{ACS16} and the aforementioned theorem by Fraser and Li, see Corollary E in \cite{ACS16} for a precise statement.\

\

\indent Let us conclude this introduction by describing the structure of the paper. After presenting our setup, we recollect in Section \ref{sec:setup} a number of basic facts concerning free boundary minimal hypersurfaces, with special emphasis on monotonicity formulae and curvature estimates. Section \ref{app:fol} and \ref{sec:remov_sing} are instead devoted to ancillary results of independent interest and applicability, namely the construction of a local minimal foliation around a boundary point of a free boundary minimal hypersurface and a boundary removable singularity theorem. The proofs of the two main theorems are respectively given in Section \ref{sec:thm2} for what concerns Theorem \ref{thm_weak_comp} and in Section \ref{sec:thm4} for Theorem \ref{thm_mult_anal}. Lastly, a thorough discussion of the bumpy metric theorem is presented in Section \ref{sec:bumpy}. The main body of the paper is complemented by two appendices that are devoted to certain technical aspects emerging in the proofs of some of our results. In particular, in Appendix \ref{app:2ndvar} we present a detailed derivation of the second variation formula for smooth hypersurfaces with boundary (without assuming either minimality of the hypersurface or orthogonal intersection with the ambient manifold). To the best of our knowledge, this computation is not easily found in the literature and believe it might be useful to present it here.

\

\textsl{Acknowledgments}. The authors wish to express their gratitude to Andr\'e Neves for his interest in this work and for his constant support, and to the anonymous referee for carefully reading the manuscript and providing detailed feedback.
A. C. also would like to thank Connor Mooney for several discussions and Francesco Lin for pointing out some relevant references. 
L. A. was visiting the University of Chicago while this article was written, and he would like to thank the Department of Mathematics for its hospitality.  He is supported by the EPSRC on a Programme Grant entitled `Singularities of Geometric Partial Differential Equations' reference number EP/K00865X/1.

\section{Setup and preliminaries}\label{sec:setup}

Let $(\mathcal{N}^{n+1},g)$ be a compact Riemannian manifold with boundary and $n\geq 2$. We let $\nabla$ denote the corresponding Levi-Civita connection, and $\nabla^{M}$ the induced connection on a submanifold $M^k$ for $k\leq n$ (in fact we shall only consider $k=n$ in our discussion). In the next three subsections we review the first and second variation formulae for free boundary minimal hypersurfaces and recall the basic definitions concerning the Jacobi operator and its spectrum.

\subsection{Free boundary minimal hypersurfaces}\label{subs:statvar}

Given the ambient manifold $(\mathcal{N},g)$,  we use the symbol $\mathfrak{X}_\de=\mathfrak{X}_\de(\mathcal{N})$ to denote the linear space of smooth ambient vector fields $X$ such that $a)$ $X(x)\in T_x\mathcal{N}$ for all $x\in\mathcal{N}$ and $b)$ $X(x)\in T_{x}\partial\mathcal{N}$ for all $x\in\partial\mathcal{N}$. To any such vector field  we can associate a one-parameter family of diffeomorphisms $\psi_t:\Nn\to\Nn$ for say $t\in(-\delta,\delta)$, where $\delta>0$. Notice that condition $b)$ allows points in $\partial\mathcal{N}$ to move under the flow $\psi$, but ensures $\psi_t(\partial\mathcal{N})\subset\partial\mathcal{N}$ for all $t$.
The first variation formula for varifolds (Cf. \cite{Sim83}, Section 39.2), specified to the case when $V$ is the varifold associated to a smooth, properly embedded, hypersurface $M\subset\mathcal{N}$ (by which we mean that $M\cap\partial\mathcal{N}=\partial M$), takes the form
\begin{equation}
\left.\frac{d}{dt}\h^n(M_t) \right\vert_{t=0}=\int_M \Dv_M (X)\id \h^n = -\int_M g(H,X) \id\h^n + \int_{\partial M} g(X,\nu) \id \h^{n-1}.
\end{equation}
Here we have set $M_t=\psi_t(M)$ and have denoted by $\nu$ the outward unit co-normal to $\partial M$.

Thus, it follows that $M$ is a stationary point for the area functional if and only if it has vanishing mean curvature ($H=0$ identically on $M$) and meets the ambient boundary $\partial\mathcal{N}$ orthogonally ($\nu\perp T\partial\mathcal{N}$ at all points of $\partial M$). In this case we shall say that $M$ is a \textsl{free boundary minimal hypersurface} in $(\mathcal{N},g)$.

\begin{rmk}
Along the course of our proofs, we will also have to deal with hypersurfaces $M$ whose boundary consists of two parts $\Sigma_1, \Sigma_2$ such that i) $\Sigma_1\cap \Sigma_2$ is a smooth, possibly disconnected, $(n-2)$-dimensional manifold ii) $\Sigma_1\subset \partial\mathcal{N}$ and iii) $\Sigma_2\setminus\Sigma_1\subset \mathcal{N}\setminus\partial\mathcal{N}$. With slight abuse of language, we say that $M$ is a minimal hypersurface with free boundary on $\partial\mathcal{N}$ if $M$ is minimal (namely has zero mean curvature at all points) and meets $\partial\mathcal{N}$ orthogonally along $\Sigma_1$.	
\end{rmk}	

\subsection{Second variation.}\label{subs:secvar}

Given a properly embedded, free boundary minimal hypersurface, we can then consider the second variation of the area functional. In the smooth setting under consideration, one has

\begin{eqnarray}
\left.\frac{d^2}{dt^2} \h^n (M_t) \right\vert_{t=0} 
&=& \int_M (|\D^\bot X^\bot|^2 - (Ric_\Nn(X^\bot,X^\bot)+|A|^2|X^\bot|^2)) \id \h^n \nonumber\\
&&+ \int_{\partial M} {\rm{II}}(X^\bot,X^\bot) \id \h^{n-1}
\end{eqnarray}
where:
\begin{itemize}
\item{for given $X\in\mathfrak{X}_\de$, the symbol $X^{\perp}$ denotes the normal component of the vector field in question, with respect to the tangent space of $M\subset\mathcal{N}$;}
\item{$\nabla^{\perp}$ is the induced connection on the normal bundle of $M\subset\Nn$;}
\item{$A$ is the second fundamental form of $M\subset\Nn$;}
\item{${\rm{II}}(W,Z)=-g(W,D_Z \hat{\nu})$ is the second fundamental form of $\partial\mathcal{N}\subset\mathcal{N}$ with respect to the outward unit normal  $\hat{\nu}$. Notice that with this convention one has that ${\rm{II}}(X^\bot,X^\bot) < 0$ whenever $\de \Nn$ is strictly convex and $X^\bot\neq 0$.} 
\end{itemize}	

\subsection{The Jacobi operator and its spectrum}\label{subs:spectrum}

Motivated by the explicit expression of the second variation formula, we consider for $M$ a properly embedded free boundary hypersurface and $v\in\Gamma(NM)$ (i. e. a section of the corresponding normal bundle) the quadratic form
 \begin{equation}\label{eq:Jacform}
 \check{Q}^M(v,v) := \int_{M} \left(|\nabla^{\perp}v|^2 - (Ric_{\Nn}(v,v) + |A|^2|v|^2)\right)\id\h^{n} + \int_{\partial M}{\rm{II}}(v,v)\id \h^{n-1}
 \end{equation}
that is often called index form of the free boundary minimal hypersurface in question. The index of $M$ is defined as the index of $\check{Q}^{M}$, that is, the maximal dimension of a linear subspace $E$ in $\Gamma(NM)$ such that $\check{Q}^M(v,v) < 0$ for all $v$ in $E\setminus\{0\}$. The index can be computed analytically in terms of the spectrum of a second order differential operator with oblique boundary conditions. Indeed, integration by parts gives
\begin{equation*}
\check{Q}_M(v,v) = - \int_{M} g(v,\check{\mathcal{L}}_{M}(v)) \id\h^n + \int_{\partial M} \left( g(v,\Db_{\nu}v) + {\rm{II}}(v,v)\right) \id\h^{n-1}
\end{equation*}
 where $\check{\mathcal{L}}_{M}v := \Delta^{\perp}_{M}v + Ric^{\perp}_{\mathcal{N}}(v,\cdot) + |A|^2 v$ is the Jacobi operator of $M$ (and we regard $Ric_{\mathcal{N}}$ as a $(1,1)$-tensor). The boundary condition
 \begin{equation*}
 g(\Db_{\nu}v,\cdot) = -{\rm{II}}(v,\cdot) 
 \end{equation*}
 is an elliptic boundary condition for $\check{\mathcal{L}}_{M}$, therefore there exists a non-decreasing and diverging sequence $\lambda_{1}\leq \lambda_{2}\leq\ldots \leq \lambda_{p} \nearrow \infty$ of eigenvalues  associated to a $L^{2}$-orthonormal basis $\{v_p\}_{p=1}^{\infty}$ of solutions to the eigenvalue problem
 \begin{equation*}
 \begin{cases}
 \check{\mathcal{L}}_{M}(v) + \lambda v  = 0 \quad & \text{on} \quad M, \\
  \Db_{\nu}v = -({\rm{II}}(v,\cdot))^{\sharp} \quad & \text{on} \quad \partial M.
 \end{cases} 
 \ \ \ (*)
 \end{equation*}
 In the formula above and throughout the paper we let the symbols $\sharp$ and $\flat$ denote the standard musical isomorphisms with respect to the background metric under consideration. \\ 
 \indent The index of the free boundary minimal hypersurface $M$ is then equal to the number of negative eigenvalues of the system $(*)$ above.
 The solutions of $(*)$ have a standard variational characterisation: If $E_{p}$ denotes a $p$-dimensional subspace of $\Gamma(NM)$ then
  \begin{equation*}
 \lambda_{p}(\check{\mathcal{L}}_M) = \inf_{E_p}\max_{v \in E_{p}\sm\{0\}} \frac{\check{Q}^{M}(v,v)}{\int_{M}|v|^2\id \h^{n}}.
 \end{equation*}
 \indent The min-max value is attained precisely by eigenfunctions of $\check{\mathcal{L}}_{M}$ associated to $\lambda_{p}$ and satisfying the boundary conditions in $(*)$. \\
 \indent In the case that $M$ is two-sided we can (by picking a global unit normal $N$ over $M$) identify sections $v\in \Gamma(NM)$ with smooth functions $f\in C^{\infty}(M)$, so that one has in fact $\check{Q}_M(v,v)=Q_M(f,f)$ for $v=fN$ and
 \begin{equation*}
Q_{M}(f,f) = - \int_{M} f\mathcal{L}_{M}(f) \id\h^n + \int_{\partial M}  \left(f\frac{\partial f}{\partial\nu} + f^2{\rm{II}}(N,N)\right) \id\h^{n-1}
\end{equation*}
 having set $\mathcal{L}_{M} := \Delta_{M} + Ric_{\mathcal{N}}(N,N) + |A|^2 $, the scalar Jacobi operator of $M$. Standard arguments allow to conclude that the first eigenvalue for $(*)$ is simple (i. e. $\lambda_1 < \lambda_2$) and a corresponding first eigenfunction $f_1$ can be chosen to be strictly positive on $M$. As a result, if there exists a strictly positive Jacobi function, namely a non-trivial positive solution to $\mathcal{L}_M f = 0$ with $\frac{\partial f}{\partial \nu}+II(N,N)f=0$ on $\partial M$ , then $\lambda_1 =0$ ($M$ is stable) and $nullity(M)=1$. 
 
 In the sequel we will often deal with the local eigenvalues (i.e. local free boundary variations): given an open set $U\In \Nn$ we write $\lambda_p (M\cap U)$ to mean the $p^{th}$ eigenvalue with respect to zero boundary conditions on $\de (M\cap U) \sm \de M$. In other words we find $\lambda_{p}$ as above through sections $v$ that have relatively compact support in $M\cap U$, so that the associated solution to the eigenvalue problem is 
 \begin{equation*}
 \begin{cases}
 \check{\mathcal{L}}_{M}(v) + \lambda v  = 0 \quad & \text{on} \quad M\cap U, \\
  \Db_{\nu}v = -({\rm{II}}(v,\cdot))^{\sharp} \quad & \text{on} \quad \partial M\cap U \\
  v = 0 \quad &\text{on} \quad \de (M\cap U)\sm \de M.
 \end{cases} 
 \end{equation*}
 Clearly the usual monotonicity property holds: if $U_1\subset U_2\subset\mathcal{N}$ are open sets then the variational characterization above implies that $\lambda_p(M\cap U_1)\geq \lambda_p(M\cap U_2)$ for any positive integer $p$, with strict inequality whenever the inclusion is proper.

 \subsection{Monotonicity and curvature estimates}\label{subs:curvest}

In this subsection, we first present a suitable monotonicity formula, which is known to hold at boundary points of a free boundary minimal hypersurface. 
\begin{thm}[Cf. \cite{LZ16A} Theorem 3.5, see also \cite{GJ86B}]\label{thm_mon}
Let $M$ be a free boundary minimal hypersurface in $(\mathcal{N},g)$. For all $p\in \de \Nn$ there exist some $\Gamma = \Gamma(\Nn) >0$ and $0<r_0 = r_0(\Nn)$ such that for all $0<\sigma<\rho \leq r_0$
\begin{eqnarray*}
e^{\Gamma\sigma}\frac{\h^n(M\cap B_\sigma(p))}{\sigma^n}\leq e^{\Gamma\rho}\frac{\h^n(M\cap B_\rho(p))}{\rho^n} - F(M,\rho,\sigma,\Gamma)
\end{eqnarray*}
for a non-negative function $F(M,\rho,\sigma,\Gamma)$. Here $\Gamma$ depends on the second fundamental form of the embedding $\Nn\emb \R^d$ and on the second fundamental form of $\de \Nn \emb \Nn$. 
\end{thm}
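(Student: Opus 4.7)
The plan is to pass to Euclidean space via an isometric embedding of $(\mathcal{N},g)$ and then apply a reflection–doubling argument that exploits the orthogonal intersection of $M$ with $\partial\mathcal{N}$. By Nash's embedding theorem, fix a smooth isometric embedding $\mathcal{N} \hookrightarrow \mathbb{R}^d$; under it $M$ becomes an $n$-dimensional submanifold of $\mathbb{R}^d$ whose mean curvature vector (as a submanifold of $\mathbb{R}^d$) coincides with the trace over $TM$ of the second fundamental form of $\mathcal{N} \hookrightarrow \mathbb{R}^d$, hence has norm pointwise bounded by a constant $H_0$ depending only on that second fundamental form.

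Fix $p \in \partial\mathcal{N}$, let $r_0 > 0$ be strictly smaller than the reach of $\partial\mathcal{N}$ in $\mathbb{R}^d$, and define the Euclidean reflection $\tau : U \to U$ across $\partial\mathcal{N}$ in the tubular neighbourhood $U = \{x \in \mathbb{R}^d : \mathrm{dist}(x,\partial\mathcal{N}) < r_0\}$ by $\tau(x) = 2\pi(x) - x$, with $\pi$ the nearest-point projection. The orthogonality of $M$ with $\partial\mathcal{N}$ ensures that the doubled set $\widehat{M} := M \cup \tau(M)$ is a $C^{1,1}$ hypersurface in $U$: at each $q \in \partial M$ the differential $d\tau_q$ is the reflection through $T_q\partial\mathcal{N}$, which fixes $T_q M \cap T_q\partial\mathcal{N}$ and negates the outward conormal $\nu$, giving $d\tau_q(T_q M) = T_q M$. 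Moreover, since $\tau(M)$ is minimal for the pulled-back metric $\tau^{\ast} g_{\mathrm{eucl}}$ and the difference $\tau^{\ast} g_{\mathrm{eucl}} - g_{\mathrm{eucl}}$ vanishes on $\partial\mathcal{N}$ with first-order Taylor expansion controlled by the second fundamental form of $\partial\mathcal{N}$ inside $\mathbb{R}^d$, the generalised mean curvature of $\widehat{M}$ in $\mathbb{R}^d$ is uniformly bounded by a constant $\Gamma$ depending only on the second fundamental forms of $\mathcal{N} \hookrightarrow \mathbb{R}^d$ and of $\partial\mathcal{N} \hookrightarrow \mathcal{N}$.

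Apply now the classical Euclidean monotonicity formula for rectifiable $n$-varifolds with bounded generalised mean curvature (obtained by inserting the radial cut-off $X(x) = \phi(|x - p|/r)(x - p)$ into the first variation identity and integrating in $r$) to $\widehat{M}$ at $p$. Since $p \in \partial\mathcal{N}$ lies on the reflection locus one has $\h^n(\widehat{M} \cap B_\rho(p)) = 2\,\h^n(M \cap B_\rho(p))$ for every $\rho \leq r_0$, so dividing the resulting inequality by two yields
\begin{equation*}
e^{\Gamma \sigma}\frac{\h^n(M \cap B_\sigma(p))}{\sigma^n} \leq e^{\Gamma \rho}\frac{\h^n(M \cap B_\rho(p))}{\rho^n} - F(M,\rho,\sigma,\Gamma)
\end{equation*}
for all $0 < \sigma < \rho \leq r_0$, with $F \geq 0$ the non-negative tangential-error term of the Euclidean monotonicity formula, essentially $\int_\sigma^\rho r^{-n-1} \int_{M \cap B_r(p)} |(x-p)^\perp|^2 \id \h^n \id r$.

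The main technical step is to verify that $\widehat{M}$ really has bounded generalised mean curvature across $\partial\mathcal{N}$, with no singular contribution concentrated there. This amounts to checking the identity $\int_{\widehat{M}} \Dv_{\widehat{M}} X \id \h^n = -\int_{\widehat{M}} g_{\mathrm{eucl}}(H,X) \id \h^n$ for every compactly supported $C^1$ test field $X$ on $U$: the tangent-plane matching at $\partial M$ guarantees that the conormal boundary terms from the two halves cancel exactly, while the pointwise mean-curvature bound on each half follows from the isometric embedding and the bound on $\tau^{\ast} g_{\mathrm{eucl}} - g_{\mathrm{eucl}}$. Once this is in place, the remainder is a standard Simon-style monotonicity argument in which the exponential factor $e^{\Gamma r}$ absorbs the bounded mean curvature contribution and the tangential-error term is collected into $F$.
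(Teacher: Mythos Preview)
The paper does not supply its own proof of this monotonicity formula; it is quoted from Li--Zhou and Gr\"uter--Jost, so there is no in-paper argument to compare against. Your strategy---reflecting $M$ across $\partial\mathcal{N}$ inside a Nash embedding and then invoking the Euclidean monotonicity formula for varifolds with bounded generalised mean curvature---is precisely the Gr\"uter--Jost mechanism, and the overall architecture is sound.

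One step, however, is incorrect as written. You assert that $\mathscr{H}^n(\widehat{M}\cap B_\rho(p)) = 2\,\mathscr{H}^n(M\cap B_\rho(p))$ exactly. This would require the reflection $\tau(x)=2\pi(x)-x$ to be a Euclidean isometry preserving $B_\rho(p)$, but $\tau$ is an isometry only to first order along $\partial\mathcal{N}$: the tangential Jacobian $J\tau$ on an $n$-plane differs from $1$ by $O(\mathrm{dist}(x,\partial\mathcal{N}))$, and $\tau(B_\rho(p))$ agrees with $B_\rho(p)$ only up to a shell of width $O(\rho^2)$. What actually holds is $\mathscr{H}^n(\widehat{M}\cap B_\rho(p)) = (2+O(\rho))\,\mathscr{H}^n(M\cap B_\rho(p))$, with the implied constant controlled by the second fundamental form of $\partial\mathcal{N}$ in $\mathbb{R}^d$. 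This multiplicative error can be absorbed into the exponential weight by enlarging $\Gamma$, but that absorption must be argued, not bypassed by a false identity. Alternatively---and this is closer to what Gr\"uter--Jost in fact do---one avoids the issue by running the first-variation computation directly on $M$ with radial test fields modified to be tangent to $\partial\mathcal{N}$, so that the conormal boundary term never appears and no post-hoc halving is needed.
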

\begin{rmk}\label{rmk_mon}
For fixed $\sigma$, $F$ is non-decreasing in $\rho$ and furthermore $F\equiv 0$ for $r\in [\sigma,\rho]$ if and only if $M$ is orthogonal to $\de B_r(p)$ for all $r$ in this range - i.e. when $M$ coincides with a cone in $B_\rho (p)\sm B_\sigma (p)$.   

In the case of the upper half space $\Nn= \R^{n+1}_+$ with $\de \Nn = \{x_{n+1} = 0\}$ we have $\Gamma \equiv 0$. In particular, and still in this case, if $\frac{\h^n(M\cap B_r(p))}{\om_n r^n} = \fr12$ for some $p\in \de M$ and $r>0$ then $M$ is a half plane, orthogonal to $\{x_{n+1}=0\}$ and passing through $p\in \de \R^{n+1}_+$. This follows easily from the fact that $\lim_{r\to 0}\frac{\h^n(M\cap B_r(p))}{\om_n r^n} = \fr12$ and therefore by monotonicity we must have that $M$ is a cone - but the only smooth free boundary minimal cone is a half plane in this case.
\end{rmk}

Coupled with the (usual) interior monotonicity formula, we state a simple corollary of the above when we restrict to $M\in \mathfrak{M}$. 
\begin{cor}\label{cor_mon}
Suppose $M\in \mathfrak{M}$ with $\h^n(M)\leq \Lambda$. There exists some $0<r_0=r_0(\Nn)$ and $C=C(\Nn, \Lambda)$ such that for all $p\in M$ and $0< r<r_0$, we have 
\begin{equation*}
\frac{1}{4} \leq \frac{\h^n(M\cap B_r(p))}{\om_n r^n} \leq C.
\end{equation*}
\end{cor}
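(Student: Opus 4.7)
The plan is to treat the two inequalities separately by applying the appropriate monotonicity formula (boundary monotonicity Theorem \ref{thm_mon}, interior monotonicity for minimal hypersurfaces in a Riemannian manifold) and carrying out a case analysis based on the location of the centre $p$.

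\textbf{Upper bound.} I would fix an auxiliary radius $r_1\in(0,r_0]$ and exploit the global area bound $\h^n(M\cap B_{r_1}(\cdot))\leq\Lambda$. If $p\in\partial\mathcal{N}$, Theorem \ref{thm_mon} applied between scales $r$ and $r_1$ gives
\[
e^{\Gamma r}\frac{\h^n(M\cap B_r(p))}{r^n}\leq e^{\Gamma r_1}\frac{\h^n(M\cap B_{r_1}(p))}{r_1^n}\leq \frac{e^{\Gamma r_1}\Lambda}{r_1^n},
\]
so one may set $C=e^{\Gamma r_0}\Lambda/(\omega_n r_1^n)$. For a general $p\in M$ one either uses the standard interior Riemannian monotonicity (if $B_{r_1}(p)\cap\partial\mathcal{N}=\emptyset$), or reduces to the boundary case by picking $q\in\partial\mathcal{N}$ with $d(p,q)=\mathrm{dist}(p,\partial\mathcal{N})$ and using the inclusion $B_r(p)\subset B_{r+d(p,q)}(q)$ together with the bound just obtained at $q$.

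\textbf{Lower bound.} I would separate cases according to whether $p\in\partial M$. If $p\in\partial M\subset\partial\mathcal{N}$, the orthogonal meeting condition forces the tangent varifold of $M$ at $p$ to be a half hyperplane (as recorded in Remark \ref{rmk_mon}), so $\lim_{\sigma\to 0}\h^n(M\cap B_\sigma(p))/\sigma^n=\omega_n/2$. Letting $\sigma\to 0$ in Theorem \ref{thm_mon} then yields
\[
\frac{\h^n(M\cap B_r(p))}{\omega_n r^n}\geq \tfrac{1}{2}e^{-\Gamma r}.
\]
If $p\in M\setminus\partial M$, the density at the smooth interior point equals $1$, and the standard Riemannian interior monotonicity yields the analogous lower bound $e^{-\Gamma r}$. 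Choosing $r_0$ small enough so that $e^{-\Gamma r_0}\geq 1/2$ then delivers the universal constant $1/4$ in both subcases.

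\textbf{Main obstacle.} The subtle case is an interior point $p\in M\setminus\partial M$ whose ball $B_r(p)$ meets $\partial M$, since the derivation of interior monotonicity uses a radial test vector field which is in general not tangent to $\partial\mathcal{N}$ along $\partial M$, so that the free boundary first variation identity produces an uncontrolled boundary term. I would resolve this by a reflection/doubling argument: in Fermi coordinates around $\partial\mathcal{N}$, the orthogonal meeting condition ensures that the reflection of $M$ across $\partial\mathcal{N}$ yields a $C^{1}$ (hence, by minimality and elliptic regularity, smooth) extension $\tilde M$ of $M$ to a doubled neighbourhood, where $p$ is a genuine interior point and the standard interior monotonicity applies to $\tilde M$; the contribution to $\h^n(\tilde M\cap B_r(p))$ coming from the reflected side is of the same order as that from the original side, so after possibly enlarging $\Gamma$ to absorb the geometry of $\partial\mathcal{N}$ one recovers the same density lower bound as in the clean interior case, completing the proof.
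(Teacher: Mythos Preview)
The paper itself gives no proof of this corollary; it is simply stated as a direct consequence of Theorem~\ref{thm_mon} (the Gr\"uter--Jost boundary monotonicity) together with the interior monotonicity formula. Your overall plan---case analysis by position of $p$ and, for the delicate case, reflection across $\partial\mathcal{N}$---is the standard route and produces the constant $\tfrac14$ (from the factor $\tfrac12$ in the doubling, times $e^{-\Gamma r_0}\geq\tfrac12$ for $r_0$ small).

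Two points deserve tightening. First, in your upper bound, the inclusion $B_r(p)\subset B_{r+d}(q)$ with $d=\mathrm{dist}(p,\partial\mathcal{N})$ yields a factor $((r+d)/r)^n$, which is unbounded when $r\ll d$. You must split further: for $r\leq d$ use interior monotonicity from scale $r$ to scale $d$ (legitimate since $B_d(p)\cap\partial\mathcal{N}=\emptyset$) and then compare $B_d(p)\subset B_{2d}(q)$; for $r>d$ your inclusion gives the bounded factor $((r+d)/r)^n\leq 2^n$. Second, your parenthetical ``hence, by minimality and elliptic regularity, smooth'' is not correct: the reflected metric is only Lipschitz across $\partial\mathcal{N}$ (unless $\partial\mathcal{N}$ is totally geodesic), so $\tilde M$ is a priori only $C^{1,\alpha}$. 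This is harmless---monotonicity holds for stationary varifolds in Lipschitz metrics, which is exactly how the Gr\"uter--Jost argument cited in Theorem~\ref{thm_mon} proceeds---but the smoothness claim should be dropped.
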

In particular, this easily tells us that there exists $\eps = \eps(\Nn)>0$ so that, as was stated in Remark \ref{rem:massbound}, $\inf\{\h^n(M)|M\in \mathfrak{M}\} \geq \eps.$

\

We now recall a suitable version of Allard's regularity theorem in the smooth setting, which is inspired by a result due to Brian White \cite[Theorem 1.1]{Whi05} in the case of interior points. The general statement below follows easily via an analogous scaling argument, but relying on Remark \ref{rmk_mon} as well.
\begin{thm}[Cf. \cite{Whi05} Theorem 1.1]\label{thm_bound_allard}
Suppose that $M\in \mathfrak{M}$. For any $r_1\leq r_0$ (with $r_0$ as in the statement of Theorem \ref{thm_mon}) there exist $\eps=\eps(\Nn, r_1)$ and $C=C(\Nn, r_1)$ so that if  
$$\text{$\frac{\h^n(M\cap B_r(p))}{\om_n r^n} \leq (1+\eps)$ for all $p\in M\sm \de M$, $r<\min\{r_1,dist(p,\de M)\}$}$$ and 
$$\text{$\frac{\h^n(M\cap B_r(p))}{\om_n r^n} \leq \frac12 (1+\eps)$ for all $p\in \de M$, $r<r_1$}$$
then $\sup_M |A| \leq C$. 
\end{thm}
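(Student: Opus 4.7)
The plan is to argue by contradiction via a point-picking and blow-up procedure, in the spirit of White's interior statement \cite{Whi05}. Were the conclusion to fail there would exist sequences $\varepsilon_k\searrow 0$ and $\{M_k\}\subset\mathfrak{M}$ satisfying the density hypotheses with $\varepsilon=\varepsilon_k$, but $\sup_{M_k}|A|\to\infty$. A standard Simon-type point-picking argument then produces points $q_k\in M_k$ with $|A(q_k)|=\lambda_k\to\infty$ and $|A|\leq 2\lambda_k$ on $M_k\cap B_{\rho_k}(q_k)$ for some radii $\rho_k$ with $\rho_k\lambda_k\to\infty$. Rescaling the ambient metric by $\lambda_k^2$ centered at $q_k$ (so that the curvatures and the second fundamental form of $\partial\mathcal{N}$ get divided by $\lambda_k$), the pointed spaces $(\mathcal{N},\lambda_k^2 g,q_k)$ converge smoothly on compact subsets either to Euclidean $\R^{n+1}$, when $\lambda_k\operatorname{dist}(q_k,\partial\mathcal{N})\to\infty$, or to the flat half-space $\R^{n+1}_+$ with boundary $\{x_{n+1}=0\}$, when $\lambda_k\operatorname{dist}(q_k,\partial\mathcal{N})$ remains bounded. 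Denoting by $\Sigma_k$ the rescaled hypersurfaces, one has $|A_{\Sigma_k}(q_k)|=1$, $|A_{\Sigma_k}|\leq 2$ on balls of diverging radius, and uniform area ratio bounds from Corollary \ref{cor_mon}; by standard compactness for minimal hypersurfaces with bounded second fundamental form --- supplemented in the boundary case by elliptic regularity for the minimal surface equation with the oblique boundary condition encoding free boundary orthogonality --- up to subsequence $\Sigma_k$ converges smoothly on compact subsets to a smooth minimal limit $\Sigma_\infty$ of bounded curvature with $|A_{\Sigma_\infty}(q_\infty)|=1$.

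Since density ratios are invariant under rescaling and $\varepsilon_k\to 0$, the density hypotheses persist in the limit as $\h^n(\Sigma_\infty\cap B_r(x))/(\om_n r^n)\leq 1$ at every interior point and $\leq 1/2$ at every boundary point, for every $r>0$. In the purely interior case, monotonicity together with $\Theta(\Sigma_\infty,x)\geq 1$ at smooth interior points forces the density ratio to be constantly equal to $1$ in $r$; by the rigidity of Euclidean monotonicity, $\Sigma_\infty$ is a minimal cone from every one of its points and hence, being smooth and connected, an affine hyperplane, contradicting $|A_{\Sigma_\infty}(q_\infty)|=1$. In the boundary case, if $q_\infty$ lies in the interior of $\Sigma_\infty$ the same interior argument applies verbatim; otherwise $q_\infty\in\partial\Sigma_\infty\subset\{x_{n+1}=0\}$, the density at $q_\infty$ is sandwiched between $1/2$ (by the half-disk lower bound) and $1/2$ (by hypothesis), and invoking Remark \ref{rmk_mon} --- for which $\Gamma\equiv 0$ in the flat half-space, and density exactly $1/2$ at a free boundary point forces the hypersurface to coincide with a half-hyperplane orthogonal to $\{x_{n+1}=0\}$ --- we conclude $|A_{\Sigma_\infty}|\equiv 0$, again a contradiction.

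The step I expect to be delicate is ensuring smooth convergence of the blow-ups all the way up to the free boundary. From the uniform bound $|A_{\Sigma_k}|\leq 2$ one must produce uniform $C^{1,\alpha}$ graphical representations of $\Sigma_k$ over half-hyperplanes and then pass to smooth limits via Schauder theory for the quasilinear minimal surface equation with the oblique Neumann-type boundary condition dictated by orthogonal intersection with the smoothly converging rescaled $\partial\mathcal{N}$. Once this regularity step is in place, the remainder of the argument reduces to invoking the interior and boundary monotonicity rigidity statements already quoted, so the real content lies in the free boundary regularity theory rather than in the blow-up scheme itself.
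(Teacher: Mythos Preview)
Your proposal is correct and follows essentially the same approach the paper indicates: a White-style scaling/blow-up argument, supplemented at boundary points by the rigidity statement of Remark~\ref{rmk_mon}. The paper does not spell out a proof but says the result ``follows easily via an analogous scaling argument, but relying on Remark~\ref{rmk_mon} as well,'' which is precisely what you do; your identification of the boundary regularity step as the only nontrivial ingredient is accurate, and in fact this step is already packaged in the paper as Theorem~\ref{thm_local_comp}, which you could cite directly rather than re-deriving via Schauder theory.
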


We recall that $L^\infty$ control on the second fundamental form $|A|$ and the area of a minimal hypersurface is enough to yield a strong compactness result. This is well-known and extends easily to the case of free boundary minimal hypersurfaces. 

\begin{thm}[Cf. \cite{LZ16A} Theorem 6.1]\label{thm_local_comp}
Let $\{M_k^n\}\In \mathfrak{M}$ and assume that
$$\h^n(M_k\cap B_{\rho}(p))+\sup_{M_k\cap B_{\rho}(p)} |A_k|\leq \Lambda<\infty$$ and there exists a sequence $p_k\in M_k$ with $p_k\to p$. Then there exists some $M\emb B_\rho(p)$ a smooth, embedded, minimal hypersurface, free boundary with respect to $\de \Nn$ and such that (up to subsequence) $M_k\to M$ uniformly smoothly and graphically on $B_{\frac{\rho}{2}}(p)$. 
\end{thm}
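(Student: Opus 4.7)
The plan is to convert the hypotheses into uniform elliptic estimates for the graphing functions of $M_k$ and then extract a smoothly convergent subsequence. Combined with the monotonicity formula from Corollary \ref{cor_mon}, the uniform bound $|A_k|\leq \Lambda$ on $M_k\cap B_\rho(p)$ produces a uniform radius $r_\star = r_\star(\Lambda,\Nn)>0$ such that, in suitable charts centered at any $q\in M_k\cap B_{\rho/2}(p)$, the hypersurface $M_k$ is representable over the corresponding tangent $n$-plane as a smooth graph $u_k$ of controlled $C^{1,1}$ norm. For interior points $q\in M_k\setminus\de\Nn$ one uses geodesic normal coordinates on $\Nn$ centered at $q$; for boundary points $q\in \de M_k\cap \de\Nn$ one uses Fermi coordinates that flatten $\de\Nn$ to a coordinate hyperplane $\{x^{n+1}=0\}$.

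In the interior case, $H=0$ reduces to a quasilinear uniformly elliptic PDE for $u_k$ with smooth coefficients depending on $g$; the $C^{1,1}$ control on $u_k$ gives uniform ellipticity, and interior Schauder estimates yield uniform $C^{k,\alpha}$ bounds for every $k$. In the boundary case, the minimal surface equation again takes the form of a quasilinear uniformly elliptic PDE in Fermi coordinates, while the free boundary orthogonality $\nu\perp T\de\Nn$ of $M_k$ translates into an oblique-derivative boundary condition on $u_k$ whose coefficients depend smoothly on $g$ and whose obliqueness is uniformly bounded below. Boundary Schauder theory for such mixed elliptic problems then provides $C^{k,\alpha}$ control of $u_k$ up to the flattened boundary, with constants depending only on $\Lambda$ and the geometry of $(\Nn,g)$.

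A finite covering of $\overline{B_{\rho/2}(p)}$ by charts of these two types, followed by a standard diagonal subsequence extraction via Arzel\`a--Ascoli, produces a subsequence of $\{M_k\}$ converging smoothly as graphs on every chart to a smooth hypersurface $M\subset B_\rho(p)$; the uniform graphical convergence guarantees that $M$ is embedded, while $H=0$ and the orthogonality $\nu\perp T\de\Nn$ pass to the limit as pointwise conditions preserved under $C^1$ convergence, so $M$ is a minimal hypersurface with free boundary in $\de\Nn$. The hypothesis $p_k\to p$ with $p_k\in M_k$ ensures $p\in M$, so $M$ is nonempty. The principal technical obstacle is the boundary case: one must verify that, in Fermi coordinates, the orthogonality condition genuinely gives an oblique elliptic boundary condition whose obliqueness constant and ellipticity bounds are uniform across the sequence, and that the graph's domain of definition contains a uniform half-ball. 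Once this framework is in place, the remainder of the argument is the standard compactness procedure via Schauder estimates and Arzel\`a--Ascoli.
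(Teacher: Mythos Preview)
The paper does not actually prove this theorem; it is stated with a citation to \cite{LZ16A}, Theorem 6.1, and introduced as a well-known fact that ``extends easily to the case of free boundary minimal hypersurfaces.'' Your proposal supplies exactly the standard argument underlying such results: uniform curvature bounds give a uniform graphing radius, the minimal surface equation becomes a uniformly elliptic quasilinear PDE, the free boundary condition becomes an oblique boundary condition in Fermi coordinates, Schauder theory upgrades the $C^{1,1}$ control to $C^{k,\alpha}$ for all $k$, and Arzel\`a--Ascoli plus a diagonal extraction yields the smooth limit. This is the correct approach and is consistent with how such compactness theorems are established in the cited reference.

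One small point worth tightening: your claim that ``uniform graphical convergence guarantees that $M$ is embedded'' is not quite automatic from local graph convergence alone, since distinct local sheets could in principle accumulate onto one another. Embeddedness of the limit uses the embeddedness of each $M_k$ together with the area bound (via Corollary \ref{cor_mon}) to control the number of sheets over any given graphing chart, and then the maximum principle to ensure distinct limit sheets cannot touch. You gesture at this by invoking the monotonicity formula, but it would be cleaner to say so explicitly.
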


Finally, the curvature estimates of Schoen and Simon \cite{SS81} (or Schoen-Simon-Yau when $n\leq 5$, see \cite{SSY75}) hold up to the boundary for stable and free boundary minimal hypersurfaces. 
\begin{thm}[Cf. \cite{LZ16A} Theorem 1.1]\label{thm_A_bound}
Let $M\in \mathfrak{M}$ for $2\leq n\leq 6$, $p\in M$ and suppose that $\lambda_1(M\cap B_{\rho}(p))\geq -\mu$ for some $0\leq \mu <\infty$. If $\rho^{-n} \h^n (M\cap B_{\rho}(p)) \leq \Lambda$ then for all $x\in B_{\frac{\rho}{2}} (p)$
$$ |A|(x)\leq \fr{C}{dist_\Nn(x,\partial B_{\rho}(p))}$$
for some $C=C(n,\mu,\Nn,\Lambda)<\infty.$ 
\end{thm}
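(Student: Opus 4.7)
The plan is to argue by contradiction using a blow-up scheme in the spirit of Schoen-Simon \cite{SS81}, adapted to the free boundary setting as in Li-Zhou \cite{LZ16A}. Assume the estimate fails: there exist $\mu, \Lambda \geq 0$, a sequence $\{M_k\}\subset \mathfrak{M}$, and points $x_k \in B_{\rho/2}(p)$ such that $\lambda_1(M_k\cap B_{\rho}(p))\geq -\mu$ and $\rho^{-n}\h^n(M_k\cap B_{\rho}(p))\leq \Lambda$, yet $|A_k|(x_k)\cdot d_{\mathcal{N}}(x_k, \partial B_\rho(p))\to \infty$. A standard point-selection argument (\`a la Choi-Schoen) promotes $\{x_k\}$ to points $y_k\in M_k$ with scale $\sigma_k = |A_k|(y_k)^{-1} \to 0$ such that $|A_k|\leq 2\sigma_k^{-1}$ on $M_k\cap B_{\sigma_k^{1/2}}(y_k)$.

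Next, I would rescale by $\sigma_k^{-1}$ about $y_k$. On fixed Euclidean balls the rescaled ambients converge either to all of $\mathbb{R}^{n+1}$ (if $d_{\mathcal{N}}(y_k,\partial\mathcal{N})/\sigma_k\to \infty$) or to a half-space $\mathbb{R}^{n+1}_+$ (otherwise), with $\partial\mathcal{N}$ flattening to a hyperplane since $|\mathrm{II}_{\partial\mathcal{N}}|$ scales by $\sigma_k$. The rescaled hypersurfaces $\tilde M_k$ then have uniformly bounded $|\tilde A_k|$ and area density bounded via Corollary \ref{cor_mon}, while orthogonal intersection at the boundary is preserved under dilation (Remark \ref{rmk_mon}). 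By Theorem \ref{thm_local_comp} I extract a smooth, complete, embedded minimal limit $M_\infty$, free boundary on $\partial\mathbb{R}^{n+1}_+$ in the boundary case, with $|A_\infty|(0)=1$. Since eigenvalues of the Jacobi operator scale as $\sigma_k^{2}$, the assumption $\lambda_1\geq -\mu$ passes to $\lambda_1(M_\infty\cap U)\geq 0$ for every precompact open $U$, that is, $M_\infty$ is stable.

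Finally I would invoke rigidity: in dimensions $2\leq n\leq 6$, a complete two-sided stable minimal hypersurface in $\mathbb{R}^{n+1}$ of Euclidean volume growth must be a hyperplane (Schoen-Simon-Yau for $n\leq 5$ \cite{SSY75}, Schoen-Simon for $n=6$ \cite{SS81}). In the boundary case, orthogonality allows me to double $M_\infty$ across $\{x_{n+1}=0\}$ by reflection, producing a smooth, complete, embedded, two-sided stable minimal hypersurface in $\mathbb{R}^{n+1}$ of Euclidean volume growth, to which the same rigidity applies; hence $M_\infty$ is a half-hyperplane. Either way $|A_\infty|\equiv 0$, contradicting $|A_\infty|(0) = 1$. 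The main obstacle is handling the boundary case: one must verify that the free boundaries of $\tilde M_k$ converge to $\partial\mathbb{R}^{n+1}_+$ with limiting orthogonality, that smooth convergence extends up to the boundary via the boundary form of Theorem \ref{thm_local_comp}, and that the doubled hypersurface is genuinely $C^2$ and stable — the last point rests on orthogonal intersection, which makes the two reflected sheets join smoothly across the doubling locus and ensures the stability inequality doubles accordingly.
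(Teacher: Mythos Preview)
Your proposal is correct and follows the same blow-up/contradiction scheme the paper alludes to in Remark~\ref{rmk_A_bound}: the paper does not give a self-contained proof of Theorem~\ref{thm_A_bound} but cites \cite{LZ16A} and records that the key input is the rigidity of complete, properly embedded, stable free boundary minimal hypersurfaces in $\mathbb{R}^{n+1}_+$ with Euclidean volume growth, which in turn is obtained (exactly as you describe) by reflecting across the boundary hyperplane and invoking Schoen--Simon \cite{SS81} for $n\leq 6$ (or Schoen--Simon--Yau \cite{SSY75} for $n\leq 5$). Your point-selection, rescaling, and passage of stability to the limit via the $\sigma_k^2$-scaling of eigenvalues are the standard steps, and your identification of the boundary case---smooth convergence up to the boundary via Theorem~\ref{thm_local_comp}, orthogonality surviving the blow-up, and $C^2$ smoothness and stability of the doubled hypersurface---isolates precisely the technical points one must check.
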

\begin{rmk}\label{rmk_A_bound}
The above estimate follows from the curvature estimates of Schoen-Simon \cite{SS81} which imply that the only complete, properly embedded, free boundary and stable minimal hypersurface in the upper half space $\R^{n+1}_+$ which has Euclidean volume growth is a half plane. 

This theorem was proved under slightly different hypotheses in \cite{LZ16A}, in particular they consider immersed and stable $M$, at which point the conclusion is only known to hold for $n\leq 5$ using the estimates of Schoen-Simon-Yau \cite{SSY75}. However, under the assumption of embeddedness of $M$, and a lower bound on the first eigenvalue, the conclusion holds also for $2\leq n\leq 6$ by the curvature estimates of Schoen-Simon (see page 13 of \cite{LZ16A}). Of course if $p\in M\sm \de M$ and $\rho<dist(p,\de \Nn)$ then this is just the (usual) interior curvature estimate of Schoen-Simon \cite[Corollary 1]{SS81}.

\end{rmk}

\section{Local minimal foliations around a boundary point of a free boundary minimal hypersurface}\label{app:fol}
   
   At various points in the proofs of our main results we will require the existence of a local foliation by minimal half-discs that are free boundary with respect to $\de\Nn$. In particular this construction will be crucial in proving the singularity removal theorem in Section \ref{sec:remov_sing}. The interior version, i. e. a local foliation by minimal discs, can be found in the Appendix of \cite{Whi87B}.          	
           
             Given $n\geq 2$ and an angle $\theta\in \left(0,\pi/4\right)$ let $p$ be the point on the negative $x^1$-axis in $\R^n$ such that the coordinate unit sphere $\partial B_1(p)$ meets the hyperplane $\left\{x^1=0\right\}$ at an angle $\theta$.  Then, let us consider the domain
             \[
             S_{\theta}:=\left\{x\in B_1(p) \ | \ x^1>0 \right\}
             \]	
             and the associated cylinder
             \[
             C_{\theta}=S_{\theta}\times\R=\left\{(x,x^{n+1}) \ | \ x\in S_{\theta}, \ x^{n+1}\in\R\right\}.
             \]	
             Set
             \[
             \Gamma_1=\overline{B_1(p)}\cap\left\{x^1=0\right\}, \ \ \Gamma_2=\partial \overline{S_{\theta}}\cap\left\{x^1\geq 0\right\}.
             \]	
             
             Given $\alpha_0\in (0,1)$ to be specified later (see Appendix \ref{app_fol}), so that suitable regularity results for elliptic problems with mixed boundary conditions are applicable, let $\alpha\in (0,\alpha_0)$ and consider the following H\"older functional spaces:
             \begin{equation*}
                X:= \left\{\textrm{Riemannian metrics of class} \ C^{2,\alpha} \ \textrm{on} \ \overline{C_{\theta}} \right\}, \ \	
             Y:= C^{2,\alpha}(\overline{S_{\theta}}) 
             \end{equation*}
             \begin{equation*}
             Z_1:=C^{0,\alpha}(\overline{S_{\theta}}), \ \ 
             Z_2:=C^{1,\alpha}(\Gamma_1), \ \
             Z_3:=C^{2,\alpha}(\Gamma_2). 
               \end{equation*}	
              
             We define the map
             \[
             \Phi: \R\times X\times Y\times Y \to \ Z_1\times Z_2\times Z_3
                \]
                by
                \[
                \Phi(t,g,w,u)= (H_g(t+w+u), g(N_g(t+w+u), \nu_g(t+w+u)), u|_{\Gamma_2})
                \]  
                where
                \begin{eqnarray*}
                H^N_g(t+w+u)&=& \textrm{mean curvature, in metric $g$, of the graph of the function} \\
                &&\ t+w+u: \overline{S_{\theta}}\to\R, \\
                N_g(t+w+u)&=& \textrm{upward unit normal, in metric $g$, of the graph of the function} \\
                &&\ t+w+u: \overline{S_{\theta}}\to\R, \\
                \nu_g(t+w+u)&=&\textrm{outward-pointing unit normal, in metric $g$, to} \ \Gamma_1\times\R, \ \textrm{evaluated at} \\
                &&\ t+w(x)+u(x).
                \end{eqnarray*}
                
                In order to avoid ambiguities, let us specify that in the first row we mean $H^N_g=g(H,N_g)$, the mean curvature function computed with respect to the upward unit normal to the graph in question.

                For a proof of the following two Propositions see Appendix \ref{app_fol}. Here and below, $\delta$ denotes the Euclidean metric in $\overline{C_{\theta}}$.

            		\begin{prop}\label{pro:EuclFol}
            	For every $t\in\R$ there exists a neighbourhood $U_t$ of $\delta\in X$, together with $\varepsilon',\varepsilon''>0$ such that
            	\[
            	\forall \ t'\in (t-\varepsilon',t+\varepsilon'), \ \forall g\in U_t , \ \forall \ w'\in Y \ \|w'\|_Y<\varepsilon''
            	\]
            	there exists a unique $u=u(t',g,w')$ such that $\Phi(t',g,w',u)=(0,0,0)\in Z_1\times Z_2\times Z_3$, i. e. such that the graph of the function $x\mapsto t'+w'(x)+u(x)$ describes a $g$-minimal $C^{2,\alpha}$ surface, meeting $\Gamma_1\times\R$ $g$-orthogonally, and whose values at $\Gamma_2$ are given by $t'+w'(x)$.
           	\end{prop}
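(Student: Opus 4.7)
The plan is to apply the Implicit Function Theorem in Banach spaces to the map $\Phi$, treating $u$ as the unknown and $(t',g,w')$ as parameters near the base point $(t,\delta,0)$. First I would verify that $(t,\delta,0,0)$ is a solution: with $u\equiv 0$, the graph in question is the horizontal plane at height $t$ in Euclidean $C_\theta$, which has zero mean curvature, meets the vertical cylinder $\Gamma_1\times\R$ orthogonally, and has constant value $t$ on $\Gamma_2$ (so that $u|_{\Gamma_2}=0$). Hence $\Phi(t,\delta,0,0)=(0,0,0)$. Next, the fact that $\Phi$ is $C^1$ (indeed smooth) as a map between the indicated H\"older spaces is routine: the mean curvature is a quasilinear second-order operator with smooth dependence on the metric and on $(u,\nabla u,\nabla^2 u)$, the orthogonality functional involves only $g$ and $\nabla u$ at the boundary, and the third component is a trace map.

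The heart of the matter is the computation and invertibility of the partial derivative $L := D_u\Phi|_{(t,\delta,0,0)} \colon Y \to Z_1\times Z_2\times Z_3$. A direct computation, using that the linearization of the mean curvature operator at a flat plane in Euclidean space is the Laplacian, that the outward $\delta$-unit conormal to $S_\theta$ along $\Gamma_1$ is $-e_1$, and that the upward unit normal to the graph of $v$ linearizes to $(-\nabla v,1)$, gives
\[
L(v) \;=\; \bigl(\Delta v,\; \partial_{x^1} v \big|_{\Gamma_1},\; v\big|_{\Gamma_2}\bigr).
\]
Invertibility of $L$ is thus a statement about the mixed Neumann--Dirichlet problem on $S_\theta$: given $(f,\phi,\psi)\in Z_1\times Z_2\times Z_3$ (with the compatibility $\phi|_{\Gamma_1\cap\Gamma_2}=\partial_{x^1}\psi|_{\Gamma_1\cap\Gamma_2}$ built into the functional setting), find a unique $v\in C^{2,\alpha}(\overline{S_\theta})$ with $\Delta v=f$ in $S_\theta$, $\partial_{x^1}v=\phi$ on $\Gamma_1$, $v=\psi$ on $\Gamma_2$. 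Injectivity is easy: if $f,\phi,\psi$ all vanish then even reflection across $\{x^1=0\}$ produces a harmonic function on the full ball $B_1(p)$ vanishing on $\partial B_1(p)\cap\{x^1\geq 0\}$ (and by reflection on its image), so $v\equiv 0$ by the maximum principle.

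Surjectivity, together with the quantitative $C^{2,\alpha}$ estimate, is the one genuinely delicate point, because $\Gamma_1$ and $\Gamma_2$ meet at a corner of opening angle $\theta\in(0,\pi/4)$ and mixed Neumann--Dirichlet problems on corner domains only admit $C^{2,\alpha}$-up-to-the-corner solutions when $\alpha$ is restricted in terms of the opening angle (this is the reason the threshold $\alpha_0=\alpha_0(\theta)\in(0,1)$ is imposed). I would invoke the standard mixed elliptic boundary theory for planar (and by separation-of-variables reduction, higher-dimensional) corner domains to obtain the solution with the required regularity, which is precisely what is worked out in Appendix \ref{app_fol}. Once $L$ is shown to be a Banach space isomorphism, the Implicit Function Theorem applied to $\Phi$ at $(t,\delta,0,0)$ yields, for some $\varepsilon',\varepsilon''>0$ and some neighborhood $U_t$ of $\delta$, a unique $C^{2,\alpha}$ solution $u=u(t',g,w')$ of $\Phi(t',g,w',u)=(0,0,0)$ for all $(t',g,w')$ in the prescribed ranges, which is exactly the claim. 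The main obstacle is the corner regularity in Step 4; everything else is a bookkeeping exercise in applying the inverse function theorem.
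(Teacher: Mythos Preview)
Your approach is essentially the same as the paper's: apply the implicit function theorem to $\Phi$ at the base point $(t,\delta,0,0)$, compute the partial linearization in $u$ as the mixed Dirichlet--Neumann Laplace problem on $S_\theta$, and verify this is a Banach isomorphism. Two small corrections are worth noting. First, in your injectivity argument, even reflection of $S_\theta$ across $\{x^1=0\}$ does \emph{not} produce the full ball $B_1(p)$ (since $p$ lies on the negative $x^1$-axis) but rather a lens-shaped domain bounded by $\Gamma_2$ and its mirror image; the maximum principle still applies on this lens, so the conclusion survives. The paper instead cites Miranda's uniqueness theorem for the mixed problem directly. Second, your parenthetical about a compatibility condition ``built into the functional setting'' is not accurate: the target $Z_1\times Z_2\times Z_3$ is the full product of H\"older spaces with no corner compatibility imposed, and the point of restricting $\alpha<\alpha_0(\theta)$ is precisely that, for such $\alpha$, the mixed problem has $C^{2,\alpha}$ solutions up to the corner for \emph{arbitrary} data. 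The paper makes this explicit by invoking Azzam--Kreyszig for the corner regularity (giving the Schauder estimate with the $C^0$ term) and then upgrades to the estimate without the $C^0$ term via a standard Arzel\`a--Ascoli contradiction argument, rather than deferring to the appendix as you do.
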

           	
           	Notice that by means of a standard argument we can always assume that the conclusion of such proposition holds for $t'\in [-1/2,1/2]$ and $g\in U\subset U_t$.
           	Possibly further shrinking the neighborhood in the domain the construction above provides in fact a local foliation. Indeed, this follows directly from Proposition \ref{pro:EuclFol} once we observe that for $g=\delta, w=0$ the family in question is just given by $u(t',\delta,0)=0$ so that the speed of the variation $t'\mapsto t'+u(t',\delta, 0)$ is the same at all points and equals one and by continuity, the speed of the variation $t'\mapsto t'+w'+u(t',g,w')$ is bounded from below by a positive constant for all $g \in U'\subset U\subset X$ and $\|w'\|_Y<\sigma\varepsilon''$ for some $\sigma\in (0,1)$.

             	\begin{prop}\label{pro:EuclFol2}
           		With the setup as above, the map 
           		\[
           		t\in \left[-1/2,1/2\right] \mapsto \textrm{graph} \ (t+w'+u(t,g,w'))\in \overline{C_\theta}
           		\]
           		determines a foliation of an open region in $\overline{C}_{\theta}$ by free boundary minimal hypersurfaces, with leaves having prescribed height $t+w'$ on the boundary component $\Gamma_2\times\R$, provided $g$ is close enough to the Euclidean metric and $w'$ has small enough norm in $Y$. 
           	\end{prop}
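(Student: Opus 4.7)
The plan is to deduce the foliation property from a uniform lower bound on the vertical speed
$$\partial_t\bigl(t+w'(x)+u(t,g,w')(x)\bigr)=1+\partial_t u(t,g,w')(x)$$
of the one-parameter family of graphs produced by Proposition \ref{pro:EuclFol}, and to obtain this bound by perturbing off the Euclidean base point, where the family is literally the horizontal slicing of $\overline{C_\theta}$.

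First I would upgrade the implicit function theorem step used to prove Proposition \ref{pro:EuclFol} to joint $C^1$ dependence of the solution $u$ on $(t',g,w')$: this is automatic from the standard form of the IFT, since $\Phi$ is $C^1$ in all its arguments and the linearised operator at the reference configuration is the isomorphism already exploited. Combined with the covering/compactness argument that extends the existence interval to $t\in[-1/2,1/2]$, one obtains $u=u(t,g,w')$ jointly $C^1$ on $[-1/2,1/2]\times U\times\{\|w'\|_Y<\varepsilon''\}$. Evaluation at the Euclidean base point $(g,w')=(\delta,0)$ then yields $u(t,\delta,0)\equiv 0$, because the constant function $u\equiv 0$ solves $\Phi(t,\delta,0,u)=(0,0,0)$ (the graph of $x\mapsto t$ is a piece of an affine horizontal hyperplane, which is Euclidean-minimal, meets $\Gamma_1\times\R$ orthogonally, and has the prescribed value $t$ on $\Gamma_2$), and Proposition \ref{pro:EuclFol} supplies uniqueness. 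In particular $\partial_t u(t,\delta,0)\equiv 0$, so the vertical speed at the base point equals $1$ identically.

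Since $Y=C^{2,\alpha}(\overline{S_\theta})$ embeds continuously in $C^0(\overline{S_\theta})$, continuity of $\partial_t u$ in $(g,w')$ at the base point, uniform in $t\in[-1/2,1/2]$, then yields a subneighbourhood $U'\subset U$ and some $\sigma\in(0,1)$ such that
$$1+\partial_t u(t,g,w')(x)\geq \tfrac12\qquad\text{for all }t\in[-1/2,1/2],\ x\in\overline{S_\theta},\ g\in U',\ \|w'\|_Y<\sigma\varepsilon''.$$
Fix any such $(g,w')$. For every fixed $x$ the height $t\mapsto t+w'(x)+u(t,g,w')(x)$ is then strictly increasing, so graphs corresponding to distinct parameter values are disjoint, and the map
$$F(t,x)=\bigl(x,\,t+w'(x)+u(t,g,w')(x)\bigr)$$
has a triangular Jacobian whose diagonal is bounded below by $\tfrac12$, making $F$ a $C^1$ diffeomorphism onto an open region of $\overline{C_\theta}$. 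Since each slice $\{t\}\times\overline{S_\theta}$ maps to a free boundary minimal hypersurface by Proposition \ref{pro:EuclFol}, $F$ realises the desired foliation with prescribed boundary height $t+w'$ on $\Gamma_2\times\R$. The only genuinely non-routine piece of the argument is ensuring that the $C^1$ dependence of $u$ on $t$ is uniform in $(g,w')$ on the closed interval $[-1/2,1/2]$; this is handled by the same covering argument that was used to extend the existence interval in the first place, so no new analytic input is needed.
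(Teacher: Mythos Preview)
Your proposal is correct and follows essentially the same approach as the paper: the paper's argument (given in the paragraph just before the statement of Proposition \ref{pro:EuclFol2}) observes that $u(t',\delta,0)=0$ so the vertical speed equals $1$ at the Euclidean base point, and then invokes continuity to get a positive lower bound on the speed for nearby $(g,w')$. You have simply fleshed out the details of this continuity argument and the resulting diffeomorphism more carefully than the paper does.
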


             \begin{rmk}\label{rem:reg}
           	A posteriori, if the background metric $g$ and the function $w$ are actually smooth (i. e. $C^{\infty}$) then so will be the leaves of the foliation in question, by virtue of linear interior and boundary Schauder estimates. Indeed, each minimal graph can locally be described via a defining function that solves a uniformly elliptic linear equation (see e. g. the discussion presented in the proof of statement (2) of Proposition \ref{prop:firstvar}).
           	\end{rmk}

            	Let us now apply such construction to obtain a local foliation by free boundary minimal leaves near a boundary point of our ambient manifold. More precisely, as throughout the article let $(\mathcal{N},g)$ be a compact Riemannian manifold with boundary, let $M\subset \mathcal{N}$ be a properly embedded, free boundary minimal hypersurface and consider a point $p\in\partial M$: we wish to apply the above discussion to foliate a suitably small neighborhood of $p$. 
           	
            	In the setting above, let us choose a local coordinate system near $p$, namely a local diffeomorphism (onto its image) $\Psi:\overline{S_{\theta}}\times [-a,a]\to (\mathcal{N},g)$ and endow the domain $\overline{S_{\theta}}\times [-a,a]$ with the pull-back metric $\Psi^{\ast}g$. Then, given a positive real number $0<r<1$ let
            	\[
            	S^r_{\theta}:=\left\{rx\in\R^{n+1} \ | \ x\in S_{\theta} \right\}, \ \ C^r_{\theta}=S^r_{\theta}\times [-ar,ar]
            	\]
            	and consider the rescaling maps $\Pi_r:\overline{S_{\theta}}\times [-a,a]\to S^r_{\theta}\times [-ar,ar]$. If we let
            	\[
            	g^r:=\Pi_r^{\ast}\left(\Psi^{\ast}g\right)
            	\]
            	then $g^r\to (\Psi^*g)(0)$ as we let $r\to 0$, the convergence being true in any given $C^{k,\alpha}$ norm. Thus, modulo possibly renaming $\Psi$ we can, without loss of generality, assume that in fact $g^r\to \delta$ as we let $r\to 0$ in the sense above. In particular, one can find $r_0>0$ such that for any $0<r<r_0$ the metric $g^r$ belongs to the neighborhood $U'$ for which the conclusions of Proposition \ref{pro:EuclFol2} hold true.
           	Furthermore, we can always assume that the diffeomorphism $\Psi$ above is chosen so that $M\cap \Psi(\overline{S_{\theta}}\times [-a,a])$ can be written as a graph of a smooth function $w'_M$ over the domain $\overline{S_{\theta}}$ with $\|w'_M\|_{Y}$ so small that Proposition \ref{pro:EuclFol2} is applicable. Therefore, in those local coordinates, the graphs of $t+w'_M+u(t,g^r,w'_M)$ when $t\in [-a,a]$ give a foliation near $M$ such that the leaf corresponding to $t=0$ is precisely $M\cap \Psi(\overline{S_{\theta}}\times [-a,a])$.
           	
           	\begin{rmk}\label{c1_reg}
           		We will require the following regularity statement later on: suppose that $v\in C^1(\overline{S_\theta})\cap C^{2,\al}(\overline{S_\theta}\sm \{0\})$ is a free boundary minimal graph, then in fact $v \in C^{2,\al}(\overline{S_\theta})$ and actually it is as smooth as the data allows. One can see this directly as a consequence of the free boundary Allard regularity statement (see Theorem 4.13 in \cite{GJ86B} by Gr\"uter and Jost), but we outline a more direct argument here when we are allowed to assume more about $v$. 
           		
           		Let us suppose (as will be the case along the proof of Theorem \ref{thm_remov_sing}) we are given a free boundary minimal graph $v\in C^1(\overline{S_\theta})\cap C^{2,\al}(\overline{S_\theta}\sm \{0\})$ for which we may assume $\|v\|_{Z_3} + \|v\|_{C^1(\overline{S_\theta})}$ is as small as we like. Letting $w^\prime$ be so that $v|_{\Gamma_2} = w^\prime_{\Gamma_2}$ and $\|w^\prime\|_{Y}\leq \|v\|_{Z_3}$ then we claim that $v\equiv v^\prime:= w^\prime + u(0,g,w^\prime)$ in $S_\theta$. In particular $v\in C^{2,\al}(\overline{S_\theta})$. This follows by considering the difference $v^\prime - v$ which satisfies zero Dirichlet conditions on $\Gamma_2$ and zero Neumann conditions on $\Gamma_1$ (as soon as one chooses appropriate Fermi coordinates, with respect to $\partial\mathcal{N}$, for the given metric $g$ to do this), and finally it solves some uniformly elliptic linear PDE on $S_\theta$ whose first eigenvalue (with respect to free boundary perturbations) can be made positive when $\|v^\prime\|_{C^1(\overline{S_\theta})} + \|v\|_{C^1(\overline{S_\theta})}$ is sufficiently small (which it is). Therefore we conclude $v\equiv v^\prime$. The argument here is just the free boundary analogue of that in \cite[p 255]{Whi87B}, and corresponds to the final statement of the Proposition in the Appendix of the same paper. 
           	\end{rmk}

           	\begin{rmk}\label{rem:comp}
           		Given any $C>1$ one can find $U_C\subset U', \varepsilon'_C\in (0,\varepsilon'), \varepsilon''_C\in (0,\varepsilon'')$ so that
           		\begin{equation}\label{eq:control}
           		\sup_{x\in S_{\theta}}|v_1-v_2|\leq C \sup_{x\in\Gamma_2}|w_1-w_2|
           		\end{equation}
           		provided $g_r\in U_C, |t|<\varepsilon'_C$ and $\|w_i\|_Y<\varepsilon_C''$,
           		where we have set $v_i :=t+ w_i+u(t,g^r,w_i)$ for $i=1,2$.
           		Indeed, such conclusion is true when $g^r=\delta$ (obviously with $C=1$), so the conclusion follows by continuity.
           	\end{rmk}

               Thereby, Proposition \ref{pro:EuclFol2} together with the above discussion implies the following assertion:
               
               \begin{prop}\label{prop:foliation}
               	Let $(\mathcal{N}^{n+1},g)$ be a compact Riemannian manifold with boundary, and let $M\subset \mathcal{N}$ be a properly embedded, free boundary minimal hypersurface. Given a point $p\in\partial M$ there exists $\rho>0$ such that the ambient geodesic ball $B_{\rho}(p)$ is foliated by free boundary minimal leaves $\left\{S_t\right\}, t\in [-1/2,1/2]$ with $S_0=M\cap B_{\rho}(p)$. Furthermore, one can choose a local chart around $p$ so that each slice is described by a coordinate graph as above, and the estimates \eqref{eq:control} are satisfied.
              	\end{prop}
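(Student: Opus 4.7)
The plan is to put together the pieces developed in the preceding discussion, the heart of which are Propositions \ref{pro:EuclFol} and \ref{pro:EuclFol2}. The proposition is essentially a geometric corollary of these two functional-analytic results, combined with a blow-up (rescaling) argument that reduces the curved ambient geometry to a small perturbation of the Euclidean model.

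First, I would construct a suitable local chart around $p\in\partial M$. Since $M$ meets $\partial\mathcal{N}$ orthogonally along $\partial M$, one can pick Fermi-type coordinates on $(\mathcal{N},g)$ adapted to $\partial\mathcal{N}$ such that, in these coordinates, $\partial\mathcal{N}$ corresponds to the hyperplane $\{x^1=0\}$ and $M$ near $p$ is the graph of a smooth function $w'_M$ over a domain of the form $\overline{S_\theta}$, for some small angle $\theta\in(0,\pi/4)$. The orthogonality condition ensures that $w'_M$ can be taken with vanishing first-order contact with $M$ at $p$, so $\|w'_M\|_Y$ is controllably small on a small neighbourhood. This defines the diffeomorphism $\Psi:\overline{S_\theta}\times[-a,a]\to(\mathcal{N},g)$, and we endow the domain with the pull-back metric $\Psi^*g$.

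Next, I would carry out the blow-up: define $\Pi_r$ as above and set $g^r:=\Pi_r^*(\Psi^*g)$. Since the metric is smooth and coincides to first order with the Euclidean metric in our chosen coordinates, one has $g^r\to\delta$ in any $C^{k,\alpha}$ norm on $\overline{C_\theta}$ as $r\to 0$. In particular, for $r$ sufficiently small (say $r<r_0$), the metric $g^r$ lies in the neighbourhood $U'\subset X$ of $\delta$ for which Proposition \ref{pro:EuclFol2} applies, and the rescaled defining function $w'_M\circ\Pi_r^{-1}$ has norm in $Y$ smaller than $\sigma\varepsilon''$. Choosing $r$ small enough to realise both conditions simultaneously, Proposition \ref{pro:EuclFol2} produces a one-parameter family of graphs of $t+(w'_M\circ\Pi_r^{-1})+u(t,g^r,w'_M\circ\Pi_r^{-1})$ for $t\in[-1/2,1/2]$ which foliate an open region of $\overline{C_\theta^r}$ by $g^r$-free-boundary minimal hypersurfaces.

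I would then identify the distinguished leaf: by the uniqueness clause in Proposition \ref{pro:EuclFol}, the leaf corresponding to $t=0$ is the \emph{unique} free boundary minimal graph over $\overline{S_\theta}$ (for the given metric and given boundary values on $\Gamma_2$) whose trace on $\Gamma_2$ is $w'_M\circ\Pi_r^{-1}$. Since $M$ itself is, after applying $\Pi_r$, a free boundary minimal graph with precisely those boundary values, uniqueness forces $u(0,g^r,w'_M\circ\Pi_r^{-1})\equiv 0$ and so the $t=0$ slice coincides with $\Pi_r(M\cap\Psi(\overline{S_\theta}\times[-a,a]))$. Finally, pulling the foliation back via $\Pi_r^{-1}$ and $\Psi$ yields a foliation of a small ambient geodesic ball $B_\rho(p)\subset\mathcal{N}$ by free boundary minimal leaves $\{S_t\}_{t\in[-1/2,1/2]}$ with $S_0=M\cap B_\rho(p)$. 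The comparison estimate \eqref{eq:control} in Remark \ref{rem:comp} transfers directly, since $g^r\in U_C$ for $r$ small, giving the last assertion. The main technical point to verify is simply the compatibility of the smallness hypotheses (on $g^r-\delta$ and on the defining function) with the geometric construction; once the chart is chosen correctly this amounts to the elementary continuity/scaling observation made above.
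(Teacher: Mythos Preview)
Your proposal is correct and follows essentially the same approach as the paper: the paper does not give a separate proof of this proposition but rather presents it as an immediate consequence of the preceding blow-up discussion together with Proposition~\ref{pro:EuclFol2} and Remark~\ref{rem:comp}, and your write-up is a faithful (slightly more detailed) rendering of that same argument. The only minor point is that you explicitly appeal to the uniqueness clause of Proposition~\ref{pro:EuclFol} to identify the $t=0$ leaf with $M$, which the paper leaves implicit; this is a clarification rather than a difference in method.
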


\section{The boundary removable singularity theorem}\label{sec:remov_sing}

\indent In the course of the proof of Theorem \ref{thm_weak_comp} we will need a removal of point singularities result for properly embedded free boundary minimal hypersurfaces with bounded volume and bounded first Jacobi eigenvalue. Such a result relies on the work of Schoen and Simon \cite{SS81}. The argument we present here to tackle point singularities lying on the boundary is modelled on the proof of Theorem 2 in \cite{Whi87B}. \\
\indent This result is patently local, so we will work in subdomains of the unit ball in $\R^{n+1}$ endowed with a Riemannian metric $g$ and suitable rescalings thereof. Considered  $B_r(0)$ the open ball of radius $r>0$ centered at the origin in $\mathbb{R}^{n+1}$, we let $B^{+}_r(0)$ denote the half-ball $\{x \in B_r(0) \ | \  x^1\geq 0\}$.

\begin{thm}\label{thm_remov_sing}
  Let $M_0$ be a smooth, embedded hypersurface in $\overline{B^{+}_1(0)}\setminus\{0\}$, with $0$ in the closure\footnote{To avoid ambiguities, we stress that $\partial M_0$ stands for the boundary of the manifold $M_0$ (in the standard sense of Differential Geometry) and the closure is meant in $\overline{B_1(0)}.$ Similar remarks apply to the statement of Theorem \ref{thm_remov_sing_int}, with straightforward modifications.} of $\partial M_0$ and $M_0\cap  \partial \overline{B^{+}_1(0)}=\partial M_{0}$, that is a free boundary minimal hypersurface at $\de B^{+}_1(0)\cap\{x^1=0\}\setminus \{0\}$ with respect to some Riemannian metric $g$ defined on $\overline{B^+_1(0)}$. \\
\indent Assume that $2\leq n \leq 6$. If $\lambda_1(M_0) \geq -\mu$ for some constant $\mu\geq 0$ and $\h^{n}(M_0) \leq \Lambda$ for some constant $\Lambda>0$, then $M=M_0\cup\{0\}$ is a smooth, embedded minimal hypersurface.  
\end{thm}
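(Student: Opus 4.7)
The plan is to follow the blow-up strategy of Schoen--Simon and White, suitably adapted to the free boundary setting using the tools already assembled in Section \ref{sec:setup} and Section \ref{app:fol}.

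First, applying the boundary monotonicity formula (Theorem \ref{thm_mon}) at the free boundary point $0\in\{x^1=0\}$, and combining with the global bound $\h^n(M_0)\leq\Lambda$, I obtain the uniform volume growth $\h^n(M_0\cap B^+_\rho(0))/\rho^n\leq C(\Lambda,g)$ for all sufficiently small $\rho$, and infer that the density $\Theta:=\lim_{\rho\searrow 0}\h^n(M_0\cap B^+_\rho(0))/(\om_n \rho^n)$ exists and is finite. Because $\lambda_1(M_0)\geq -\mu$ and $2\leq n\leq 6$, the free boundary Schoen--Simon estimate (Theorem \ref{thm_A_bound}), applied at each $x\in M_0$ close to $0$ at scale $\rho\sim |x|/2$, delivers the punctured curvature bound $|A_{M_0}|(x)\,|x|\leq C$.

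Next I would analyse the blow-up sequence $M_{r_k}:=r_k^{-1}M_0$ in the rescaled metrics $g_{r_k}(\cdot):=g(r_k\,\cdot)\to\delta$ as $r_k\searrow 0$. Thanks to the uniform area and curvature bounds above, Theorem \ref{thm_local_comp} guarantees that, after passing to a subsequence and a standard diagonal argument, $M_{r_k}$ converges locally smoothly and graphically on $\overline{\R^{n+1}_+}\sm\{0\}$ to a complete, properly embedded minimal hypersurface $C\subset\overline{\R^{n+1}_+}$ that is free boundary with respect to $\{x^1=0\}$. Since $\h^n(M_{r_k}\cap B^+_\sigma(0))/\sigma^n=\h^n(M_0\cap B^+_{r_k\sigma}(0))/(r_k\sigma)^n\to\Theta$ in the limit, constancy of the rescaled density in $\sigma$ together with the equality case of the flat monotonicity formula (Remark \ref{rmk_mon}, where $\Gamma\equiv 0$) forces $C$ to be a cone with vertex at $0$. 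Stability is preserved in the limit by the quadratic scaling of Jacobi eigenvalues ($\lambda_1(M_{r_k})\geq -\mu r_k^2\to 0$), so $C$ is a stable free boundary minimal cone in $\R^{n+1}_+$ with Euclidean volume growth and singular set at most $\{0\}$. By the free boundary Schoen--Simon rigidity recorded in Remark \ref{rmk_A_bound}, $C$ must then be a half-hyperplane through $0$ meeting $\{x^1=0\}$ orthogonally; in particular $\Theta=1/2$.

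The removal is then effected through the boundary Allard-type theorem (Theorem \ref{thm_bound_allard}): for $r$ sufficiently small the boundary density at $0$ is arbitrarily close to $1/2$, while at any point $p\in M_0\cap B^+_r(0)\sm\de M_0$ close to $0$, interior monotonicity combined with the first-step curvature bound forces the interior density to be arbitrarily close to $1$. This yields $\sup_{M_0\cap B^+_{r/2}(0)}|A|<\infty$, and the smooth graphical subconvergence of the rescalings to a half-hyperplane then forces $M_0\cap B^+_r(0)$, for $r$ small enough, to be a smooth graph over a single half-hyperplane through $0$. The $C^{2,\al}$ extension across the puncture follows from the mixed Dirichlet/oblique-derivative elliptic regularity theory reviewed in Remark \ref{c1_reg}, and bootstrapping yields the full smoothness of $M=M_0\cup\{0\}$.

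The main obstacle lies in the second step, namely classifying the tangent cone as a half-hyperplane: this confines the whole argument to the regime $2\leq n\leq 6$ and depends crucially on the free boundary version of the Schoen--Simon rigidity for stable minimal cones recorded in Remark \ref{rmk_A_bound}. Should one wish to avoid invoking it, a natural geometric substitute would be to pair the local foliation by free boundary minimal half-discs of Proposition \ref{prop:foliation} with the strong maximum principle to rule out any non-flat tangent cone through $0$.
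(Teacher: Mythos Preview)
Your overall blow-up strategy is correct and mirrors the paper's, but there is a genuine gap at the step where you conclude $\Theta=1/2$ and invoke Theorem \ref{thm_bound_allard}. The smooth graphical convergence of $M_{r_k}$ to the tangent cone $C$ provided by Theorem \ref{thm_local_comp} may occur with multiplicity $m\geq 1$; the varifold limit is then $m[C]$ and the density at the origin is $\Theta=m/2$, not $1/2$. Nothing in your argument rules out $m\geq 2$: embeddedness of $M_0$ away from $0$ is perfectly compatible with several sheets coming together at the puncture. If $m\geq 2$ the density hypothesis of the Allard-type Theorem \ref{thm_bound_allard} fails outright at boundary points near $0$, and the remainder of your argument collapses. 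The paper does not go through Allard at all; instead it first upgrades the curvature bound to $|A|(x)|x|\to 0$ (since every blow-up is totally geodesic), then uses a Morse-theoretic argument on $|x|^2$ to decompose $M_0$ near $0$ into finitely many disjoint punctured half-discs, treats each one separately, and only \emph{a posteriori} invokes the Hopf boundary point lemma to show that two smooth free boundary minimal half-discs cannot meet only at $0$.

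A second, related gap is the passage from \emph{subsequential} convergence of the rescalings to a half-hyperplane to the assertion that $M_0\cap B^+_r(0)$ is a graph over a \emph{single} half-hyperplane. This is precisely the uniqueness of the tangent cone, and it is not automatic: different subsequences could a priori yield different half-hyperplanes through $0$. The paper proves uniqueness by an explicit barrier argument, building the free boundary minimal foliation of Section \ref{app:fol} with boundary data on $\Gamma_2$ matching $M_{r_k}$, and using the maximum principle to trap $M_{r_k}$ on one side of the leaf through $0$; hence any two subsequential tangent half-hyperplanes lie on one side of each other and must coincide. You mention this foliation/maximum principle device only as an optional alternative in your last paragraph, but in fact it (or something equivalent) is indispensable, and replaces rather than supplements the Allard step.
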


\begin{proof}
 \indent Given any sequence $r_i > 0$ converging to zero, we first argue that any blow-up sequence $\{r_i^{-1}M_0\}$ in $(B^{+}_{r_i^{-1}}(0)),g(r_i x))$ has a subsequence converging to some half-hyperplane $H_0$ in the (Euclidean) half-space $(\mathbb{R}^{n+1}_{+},g(0))$. Indeed, under our hypotheses, we can apply the curvature estimate of Theorem \ref{thm_A_bound} and the basic compactness Theorem \ref{thm_local_comp} to conclude that there is a subsequence converging locally smoothly and graphically on compact sets of $\mathbb{R}^{n+1}_+$ away from $0$, with finitely many sheets, to some free boundary minimal hypersurface $M_{\infty}$ in $(\mathbb{R}^{n+1}_{+}\setminus\{0\},g(0))$. Notice that the monotonicity formula, Corollary \ref{cor_mon} was used to guarantee uniform volume bounds on compact subsets of $\mathbb{R}^{n+1}\setminus\{0\}$, and that it implies that the limit $M_{\infty}$ is a half-cone (i.e., it is invariant by dilations with respect to the origin). Also, $M_{\infty}$ must be stable on compact sets of $\mathbb{R}^{n+1}\setminus\{0\}$, by upper semi-continuity of the first Jacobi eigenvalue. Hence, the reflection of $M_{\infty}$ across the hyperplane $\partial \mathbb{R}^{n+1}_{+}$ is a stable minimal cone in $\mathbb{R}^{n+1}$, smooth except possibly at the origin. The claim now follows immediately since stable minimal cones in dimensions $2\leq n \leq 6$ are hyperplanes \cite{S68}. \\
 \indent The fact that the blow-up limits $M_{\infty}$ are totally geodesic implies that the curvature estimates can be improved to say that $|A|(x)|x|\rightarrow 0$ as $x\rightarrow 0$, uniformly on compact subsets of $B^{+}_1(0)\sm \{0\}$. By a well-known Morse-theoretic argument, it follows that $M_0$, in a small neighbourhood of $0$, must be a (finite and disjoint) union of $n$-dimensional discs and half-discs with exactly one critical point, and half-discs of dimension $n$, punctured at $0$, with no critical points for the square of the distance function from the origin. Indeed, the sequence is graphically converging to a half-hyperplane and the curvature estimate eventually implies that the Euclidean norm squared $|x|^2$, restricted to $M_0$, has strictly positive Hessian. Once each punctured free boundary minimal half-disk is proven to be smooth up to the point $0$, the boundary maximum principle (the Hopf boundary point lemma) leads to the conclusion that two of them cannot intersect only at $0$. Without loss of generality, we will therefore assume that $M_0$ is a punctured half-disk in $B^{+}_1(0)\setminus\{0\}$ such $x\in M_0 \mapsto |x|^2 \in \mathbb{R}$ is a function with no critical points. \\
 \indent Notice that, in the above argument, the free boundary half-hyperplane $M_{\infty}$ could possibly depend on the blow-up sequence $r_i^{-1}M$ we have chosen. We will now show that this is not the case. To that scope, let us fix some limiting free boundary half-hyperplane, say $H_0=\R^{n+1}_{+}\cap\{x^{n+1}=0\}$, set $M_{i}=r_{i}^{-1}M$ for each $i\geq 1$ and assume $M_i$ converges to $H_0$ in the sense explained above. In the notation of Section \ref{app:fol}, using the locally graphical convergence to free boundary half-hyperplanes established above and the absence of critical points of $|x|^2$, up to a subsequence (which we shall not rename) we can eventually write $M_{i} \cap (\Gamma_2 \times \mathbb{R})$ as the graph of a function $w_{i}$ on $\Gamma_2$ such that $||w_{i}||_{C^{2,\alpha}}$ converges to zero. Moreover, for all $\varepsilon>0$, $M_{i}\cap (S_{\theta}\times \mathbb{R}) \subset S_{\theta}\times [-\varepsilon,\varepsilon]$ eventually. Comparing $M_{i}\cap C_{\theta}$ with the leaf $L_{i}$ through $0$ of the free boundary minimal foliation whose boundary values, over $\Gamma_2$, are $w_{i}+t$, $t\in [-\varepsilon',\varepsilon']$ (see Proposition \ref{pro:EuclFol} in Section \ref{app:fol}), it follows from the maximum principle that each $M_{i}\cap C_{\theta}$ must eventually be either above or below the leaf $L_{i}$. Thus, any other free boundary half-hyperplane $H_0'$ that arises as the limit of a blow-up sequence $M_{i^{'}}:=r^{-1}_{i'}M$ will be then contained in either of the two closed halfspaces determined by $H_0$ in $\R^{n+1}_{+}$. On the other hand, since $H'_0$ and $H_0$ both contain the point $0$, they must be equal, as we wanted to prove. \\ 
 \indent In conclusion, as $r$ goes to zero, the dilated hypersurfaces $r^{-1}M_0$ converge locally graphically and smoothly away from zero, to a free boundary half-hyperplane $H_0$. In particular, $M_0\cup\{0\}$ can be written, in a neighbourhood of $0$, as a $C^1$ graph over $H_0$ that is smooth except possibly at $0$. Using Remark \ref{c1_reg} (i.e. arguing as in \cite[Appendix]{Whi87B}) we then conclude that $M_{0}\cup \{0\}$, in a small neighbourhood of $0$, is described by the graph of a $C^{2,\alpha}$ function on $S_{\theta}\subset H_0$. At that stage, smoothness follows by standard regularity theory.   
\end{proof}

An \textsl{interior} version of the removable singularity theorem stated above was already employed in \cite{ACS15}, but we state it here for the sake of completeness and for later use. The proof also relies on the work by Schoen-Simon, and follows along the very same lines with straightforward changes.

\begin{thm}\label{thm_remov_sing_int}
	Let $M_0$ be a smooth, embedded hypersurface in $\overline{B_1(0)}\setminus\left\{0\right\}$, with $0$ in the closure of $M_0$ and $M_0\cap\partial \overline{B_1(0)}=\partial M_0$, that is minimal with respect to some Riemannian metric $g$ defined on $\overline{B_1(0)}$. \\
	\indent Assume that $2\leq n \leq 6$. If $\lambda_1(M_0) \geq -\mu$ for some constant $\mu\geq 0$ and $\h^{n}(M_0) \leq \Lambda$ for some constant $\Lambda>0$, then $M=M_0\cup\{0\}$ is a smooth, embedded minimal hypersurface.  
\end{thm}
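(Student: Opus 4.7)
The plan is to mirror the argument given for Theorem \ref{thm_remov_sing}, replacing half-balls with interior geodesic balls and using the classical interior monotonicity formula in place of the boundary version from Theorem \ref{thm_mon}. First I would perform a blow-up analysis: for an arbitrary sequence $r_i \to 0^+$, consider the rescaled hypersurfaces $M_i := r_i^{-1} M_0$ in $(B_{r_i^{-1}}(0), g_i)$ with $g_i(x) = g(r_i x)$. The area bound together with the monotonicity formula yields uniform volume bounds on compact subsets of $\R^{n+1}\setminus\{0\}$; combining the Schoen--Simon interior curvature estimate (the interior version of Theorem \ref{thm_A_bound}, see Remark \ref{rmk_A_bound}) with the compactness Theorem \ref{thm_local_comp}, one extracts a subsequence converging locally smoothly and graphically on compact subsets of $\R^{n+1}\setminus\{0\}$, with finitely many sheets, to a minimal hypersurface $M_\infty$ in $(\R^{n+1}, g(0))$. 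Interior monotonicity then forces $M_\infty$ to be a minimal cone, and upper semicontinuity of the first Jacobi eigenvalue shows that $M_\infty$ is stable on compact subsets of $\R^{n+1}\setminus\{0\}$. Simons' theorem (in the form sharpened by Schoen--Simon) on stable minimal cones in ambient dimension at most seven then forces $M_\infty$ to be a hyperplane through the origin, exactly as in the proof of Theorem \ref{thm_remov_sing}.

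Next I would upgrade the curvature bound to $|A|(x)|x| \to 0$ as $x \to 0$, which is a standard consequence of the fact that every blow-up limit is totally geodesic: a sequence along which $|A|(x_k)|x_k|$ stayed bounded away from zero would, after rescaling around points at comparable distance to the origin, produce a nontrivial smooth minimal tangent object that is not a hyperplane, contradicting the previous step. As in the boundary case, this improved estimate implies that $x \mapsto |x|^2$ has strictly positive Hessian on $M_0$ in a small punctured neighbourhood of the origin, so near $0$ the hypersurface $M_0$ is a finite disjoint union of components, each either a topological disk with a single critical point or a punctured disk with none. The interior strong maximum principle rules out two such components meeting only at $0$, so after relabelling one may assume $M_0$ is a single punctured disk on which $|x|^2$ has no critical points.

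The main step, and the one I expect to be the technical heart of the argument, is uniqueness of the tangent plane. Here I would invoke the interior minimal foliation construction from the Appendix of \cite{Whi87B} (the interior analogue of Proposition \ref{prop:foliation}): after fixing a blow-up limit $H_0$ with coordinates so that $H_0 = \{x^{n+1} = 0\}$, for each large $i$ the surface $M_i$ can be written as a graph over an annulus in $H_0$ with small $C^{2,\alpha}$ boundary trace $w_i$, and one compares $M_i$ with the leaf $L_i$ through the origin of the $g_i$-minimal foliation whose boundary data on the annulus is $w_i + t$, $t \in [-\varepsilon',\varepsilon']$. The maximum principle forces $M_i$ to lie entirely on one side of $L_i$, so any other tangent hyperplane $H_0'$ arising from a distinct blow-up sequence must be contained in one of the two closed halfspaces determined by $H_0$; since both hyperplanes pass through $0$, they coincide. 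Once uniqueness of the tangent plane is established, $M_0 \cup \{0\}$ is locally a $C^1$ graph over $H_0$ which is smooth away from $0$, and the $C^{2,\alpha}$ regularity argument recalled in Remark \ref{c1_reg} (equivalently the final step in the Appendix of \cite{Whi87B}) promotes this to a smooth minimal graph across $0$; standard interior elliptic regularity then gives full smoothness, completing the proof.
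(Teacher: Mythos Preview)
Your proposal is correct and follows exactly the route the paper indicates: the paper does not give a separate proof of this interior statement but simply remarks that it ``follows along the very same lines [as Theorem \ref{thm_remov_sing}] with straightforward changes,'' relying on Schoen--Simon and the interior foliation from the Appendix of \cite{Whi87B}. You have correctly identified all the interior substitutes (interior monotonicity, the interior Schoen--Simon estimate of Remark \ref{rmk_A_bound}, White's interior foliation in place of Proposition \ref{prop:foliation}), so your outline is precisely what the authors have in mind.
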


\section{A convergence result for free boundary minimal hypersurfaces} \label{sec:thm2}

In this section, we prove the following compactness result, which implies Theorem \ref{thm_weak_comp}.

\begin{thm}\label{thm_weak_comp_ext}
	Let $2\leq n\leq 6$ and $(\Nn^{n+1},g)$ be a compact Riemannian manifold with boundary. For fixed $\Lambda, \mu\in \R_{\geq 0}$ and $p\in \N$, suppose that $\{M_k\}$ is a sequence in $\mathfrak{M}_p(\Lambda, \mu)$. Then there exist a smooth, connected, compact embedded minimal hypersurface $M\subset\mathcal{N}$ meeting $\partial\mathcal{N}$ orthogonally along $\partial M$, $m\in \mathbb{N}$ and a finite set $\mathcal{Y}\In M$ with cardinality $|\mathcal{Y}|\leq p-1$  such that, up to subsequence, $M_k \to M$ locally smoothly and graphically on $M\sm \mathcal{Y}$ with multiplicity $m$.  Furthermore, if $M\in\mathfrak{M}$ then $M\in\mathfrak{M}_{p}(\Lambda,\mu)$. 
\end{thm}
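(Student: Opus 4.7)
I would identify a finite set $\mathcal{Y}\subset\mathcal{N}$ where the second fundamental forms $|A_{M_k}|$ concentrate, bound $|\mathcal{Y}|\leq p-1$ by means of the eigenvalue hypothesis, extract smooth graphical subconvergence of constant multiplicity $m$ on $\mathcal{N}\setminus\mathcal{Y}$ via the curvature estimates of Theorem~\ref{thm_A_bound} together with the local compactness result Theorem~\ref{thm_local_comp}, and finally remove the singularities by appealing to Theorems~\ref{thm_remov_sing_int} and~\ref{thm_remov_sing}.

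\textbf{Step 1: The singular set.} Set
\[
\mathcal{Y}:=\Bigl\{y\in\mathcal{N}\ :\ \limsup_{k\to\infty}\,\sup_{M_k\cap B_r(y)}|A_{M_k}|=+\infty\ \text{for every } r>0\Bigr\}.
\]
I would prove $|\mathcal{Y}|\leq p-1$ by contradiction: if $y_1,\dots,y_p\in\mathcal{Y}$ are distinct, pick $\rho>0$ smaller than $r_0(\mathcal{N})$ of Theorem~\ref{thm_mon} and than half the pairwise distances, so that the balls $B_\rho(y_j)$ are pairwise disjoint; Corollary~\ref{cor_mon} furnishes uniform local mass bounds on each $M_k\cap B_\rho(y_j)$. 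The failure of a uniform $|A_{M_k}|$ bound on $B_\rho(y_j)$, together with the contrapositive of Theorem~\ref{thm_A_bound}, forces $\lambda_1(M_k\cap B_\rho(y_j))<-\mu$ eventually for every $j$. Choosing negative eigenfunctions $\varphi_{j,k}\in\Gamma(NM_k)$ supported in $B_\rho(y_j)$, their pairwise disjoint supports span a $p$-dimensional test space on $M_k$ with $\check Q^{M_k}<-\mu\,\|\cdot\|_{L^2}^2$, contradicting $\lambda_p(M_k)\geq-\mu$.

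\textbf{Step 2: Convergence and singularity removal.} On $\mathcal{N}\setminus\mathcal{Y}$, uniform curvature bounds hold in small balls by the very definition of $\mathcal{Y}$, so Theorem~\ref{thm_local_comp} together with a diagonal argument produces, after passing to a subsequence, a smooth embedded minimal hypersurface $M'\subset\mathcal{N}\setminus\mathcal{Y}$ meeting $(\partial\mathcal{N})\setminus\mathcal{Y}$ orthogonally along its boundary, with $M_k\to M'$ locally smoothly and graphically of an integer multiplicity that is locally constant and hence constant on each component of $M'$; lower semi-continuity gives $\h^n(M')\leq\Lambda$. For each $y\in\mathcal{Y}$ I would pick $r>0$ so small that $B_r(y)\cap\mathcal{Y}=\{y\}$ and apply Theorem~\ref{thm_remov_sing_int}, or Theorem~\ref{thm_remov_sing} when $y\in\partial\mathcal{N}$, to extend $M'\cap(B_r(y)\setminus\{y\})$ smoothly across $y$. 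The required hypothesis $\lambda_1(M'\cap(B_r(y)\setminus\{y\}))\geq-\mu$ follows from a refined disjoint-support argument: if it failed, a compactly supported section $v$ on $M'\cap(B_r(y)\setminus\{y\})$ whose support is bounded away from $y$ by some $\delta>0$, and satisfies $\check Q^{M'}(v,v)<-\mu\|v\|_{L^2}^2$, could be lifted through the graphical convergence to $v_k$ on $M_k$; combining $v_k$ with $|\mathcal{Y}|$ disjoint-support negative eigenfunctions produced as in Step~1 (the one near $y$ taken in a ball contained in $B_{\delta/2}(y)$, disjoint from $\mathrm{supp}(v_k)$) yields $|\mathcal{Y}|+1\leq p$ disjoint negative sections, violating $\lambda_p(M_k)\geq-\mu$. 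Pasting the local extensions gives a smooth compact embedded minimal hypersurface $\widetilde M\subset\mathcal{N}$ meeting $\partial\mathcal{N}$ orthogonally along $\partial\widetilde M$; Hausdorff convergence combined with connectedness of each $M_k$ forces $\{M_k\}$ to lie eventually in a single connected component of $\widetilde M$, which I take to be the desired $M$ (with multiplicity $m$).

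\textbf{Step 3: Transfer of the bounds, and the main obstacle.} If the limit $M$ is properly embedded, so $M\in\mathfrak{M}$, then $m\,\h^n(M)\leq\liminf_k\h^n(M_k)\leq\Lambda$ yields the area bound (since $m\geq 1$), while $\lambda_p(M)\geq-\mu$ follows from upper semi-continuity of $\lambda_p$ under smooth multiplicity-$m$ convergence: any $p$-dimensional test subspace on $M$, cut off near $\mathcal{Y}$ (with negligible energy loss, since points have vanishing $H^1$-capacity in dimension $n\geq 2$) and lifted to the $m$ sheets of $M_k$ via the local graphing, passes to the limit in the variational characterisation of $\lambda_p$. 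The principal technical obstacle is the singularity removal in Step~2, where the dimensional restriction $2\leq n\leq 6$ enters through Simons' classification of stable minimal cones and where the boundary case relies crucially on the local free boundary minimal foliation of Proposition~\ref{prop:foliation}.
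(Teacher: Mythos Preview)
Your overall architecture matches the paper's proof closely: define a finite concentration set, bound its cardinality via the eigenvalue hypothesis, obtain smooth graphical convergence away from it, and then invoke the interior/boundary removable singularity theorems. The definition of $\mathcal{Y}$ via curvature blow-up (rather than the paper's ``bad convergence points'') is an equivalent formulation, and Steps~1 and~3 are essentially correct.

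There is, however, a genuine gap in Step~2 where you verify the hypothesis $\lambda_1\bigl(M'\cap(B_r(y)\setminus\{y\})\bigr)\geq -\mu$ needed to apply Theorems~\ref{thm_remov_sing_int} and~\ref{thm_remov_sing}. Your ``refined disjoint-support argument'' produces $|\mathcal{Y}|+1$ pairwise disjoint negative test sections on $M_k$, but this only contradicts $\lambda_p(M_k)\geq -\mu$ when $|\mathcal{Y}|+1\geq p$, i.e.\ when $|\mathcal{Y}|=p-1$ exactly. Nothing rules out $|\mathcal{Y}|<p-1$, in which case $|\mathcal{Y}|+1<p$ and your collection of sections is too small to force a contradiction. (Indeed, you yourself write ``$|\mathcal{Y}|+1\leq p$ disjoint negative sections'', which is the wrong inequality for the argument to close.)

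The paper resolves this by working entirely at the single point $y$ and \emph{iterating in scale}. Assume for contradiction that $\lambda_1\bigl(M_0\cap B_\varepsilon(y)\bigr)<-\mu$ for \emph{every} small $\varepsilon>0$. Since $y\notin M_0$, any test section realising this inequality on $M_0\cap B_{\varepsilon_1}(y)$ has compact support contained in some annulus $B_{\varepsilon_1}(y)\setminus B_{\varepsilon_2}(y)$; one then applies the hypothesis again at scale $\varepsilon_2$ to find a second negative section supported in a smaller annulus, and so on. After $p$ iterations one obtains $p$ pairwise disjoint annular regions $U_i$ around $y$ with $\lambda_1(M_0\cap U_i)<-\mu$; these lift to $M_k$ via the smooth convergence (which is valid on each $U_i$, being bounded away from $y$) and contradict $\lambda_p(M_k)\geq -\mu$ directly, with no reference to the other points of $\mathcal{Y}$. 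Replacing your counting argument by this annular iteration closes the gap.
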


\begin{proof}
Let $\mu_k$ be the measure defined on open sets $U$ of $\mathcal{N}$ by
\begin{equation*}
   \mu_k(U) = \h^{n}(M_k\cap U)
\end{equation*}
\noindent for each integer $k$. Since the volume of the hypersurfaces $M_k$ is uniformly bounded, these are Radon measures and we can assume, passing to a subsequence if necessary (but without renaming), that $\mu_k$ converges weakly to a Radon measure $\mu_{\infty}$. \\
\indent A point $x$ in the support of the measure $\mu_{\infty}$ will be called a point of good convergence if, possibly extracting a further subsequence, the hypersurfaces $M_k$ are converging locally graphically and smoothly with finite multiplicity to a free boundary minimal hypersurface in a neighbourhood of $x$, and a point of bad convergence otherwise. \\
\indent As a trivial consequence of the definitions above, each point of good convergence belongs to a free boundary embedded minimal hypersurface contained in the support of $\mu_{\infty}$. \\
\indent Let $\mathcal{Y}$ denote the set of points of bad convergence. In view of the curvature estimates given in Theorem \ref{thm_A_bound} and the convergence result provided by Theorem \ref{thm_local_comp}, it is possible to check that 
\begin{equation*}
  \mathcal{Y} \subset \{x \in \mathcal{N} \ | \ \text{ for all $\varepsilon > 0$, } \limsup \lambda_{1}(M_k\cap B_{\varepsilon}(x)) < -\mu \}.
\end{equation*} 
\indent This implies that $\mathcal{Y}$ has at most $p-1$ points. Indeed, if $\mathcal{Y}$ contained $p$ distinct points $x_1,\ldots,x_p$, one would find $p$ disjoint balls $B_1,\ldots,B_p$ where, eventually (and up to a subsequence), $\lambda_{1}(M_k\cap B_i) < - \mu$ for all integers $k$ and any given $i$, which contradicts the hypothesis that $\lambda_{p}(M_k) \geq - \mu$ (see the discussion presented in Subsection \ref{subs:spectrum}).  \\
\indent Therefore, the support of the measure $\mu_{\infty}$ is the disjoint union of the finite set $\mathcal{Y}$ and a smooth embedded free boundary minimal hypersurface, which we will denote by $M_{0}$. Notice that no point of $\mathcal{Y}$ is isolated (because $\mu_{\infty}$ is the limit of the Radon measures associated to the free boundary minimal hypersurfaces $M_k$, each of them satisfying the monotonicity formula). We will now show that the closure of $M_0$ (which coincides with the support of $\mu_{\infty}$) is actually a smooth, free boundary minimal hypersurface. \\ 
\indent To that scope, let us fix a point $y\in\mathcal{Y}$ and distinguish these three cases:
\begin{enumerate}
\item{$\liminf_{k\to\infty} \textrm{dist}_{(\mathcal{N},g)}(y,\partial M_k)\geq d_0>0$;}	
\item{there exists a sequence $\left\{y_k\right\}\subset\mathcal{N}, \ y_k\in\partial M_k, \ y_k\to y$ and $(\partial M_0\setminus\left\{y\right\})\cap B_{\rho}(y)\neq\emptyset$  for all $\rho>0$;}
\item{there exists a sequence $\left\{y_k\right\}\subset\mathcal{N}, \ y_k\in\partial M_k, \ y_k\to y$ and $(\partial M_0\setminus\left\{y\right\})\cap B_{\rho_0}(y)=\emptyset$ for some $\rho_0>0$.}	
\end{enumerate}	Also, let us choose once and for all a constant $\varepsilon_0>0$ such that $B_{\varepsilon_0}(y)\cap\mathcal{Y}=\left\{y\right\}$.\\

\indent As far as case 1) is concerned, we can proceed as follows. If $y\notin\partial\mathcal{N}$, a potential \textsl{interior} singularity of $M_0$, we can just invoke Theorem \ref{thm_remov_sing_int} to conclude smoothness at the point in question. Indeed, we claim that there is some $0<\eps< \min\left\{d_0/3,\varepsilon_0\right\}$ with $\lambda_1(M_0\cap B_\eps(y)) \geq - \mu$: if this were not the case, one would find $p$ disjoint open sets $U_i$ of the form $B_{2\rho}(y)\setminus B_{\rho}(y)$, $\rho>0$, where $\lambda_{1}(M_0\cap U_i) < - \mu$, and of course the smooth convergence of $M_k$ together with the upper semi-continuity of the first Jacobi eigenvalue would then imply $\lambda_{1}(M_k\cap U_i) < - \mu$ for every $i\in I, |I|=p$ and large enough $k$, hence $\lambda_p(M_k)<-\mu$, a contradiction. 
If instead $y\in\partial\mathcal{N}$, let us consider an enlargement of $(\mathcal{N},g)$ to a compact Riemannian manifold $(\mathcal{N}',g')$ without boundary (that is to say: $(\mathcal{N},g)$) isometrically embeds in $(\mathcal{N}',g')$ as a regular subdomain).
Then, by virtue of the same argument we have just presented, there must be a small enough $0<\varepsilon< \min\{d_0/3,\varepsilon_0\}$ such that, regarding $M_0\cap B_{\varepsilon}(y)$ as a minimal hypersurface in $(\mathcal{N}',g')$, we have $\lambda'_1(M_0\cap B'_{\varepsilon}(y))\geq -\mu$ (here it is understood, given our notation, that we view the hypersurfaces as lying in $(\mathcal{N}',g')$ and consider unrestricted variations in the enlarged ambient manifold when discussing their local Jacobi spectrum; notice that the eigenvalues $\lambda'_1(M_k\cap B'_{\varepsilon}(y))$ and $\lambda_1(M_k\cap B_{\varepsilon}(y))$ coincide, because each $M_k$ is properly embedded in $\mathcal{N}$ by hypothesis). Thus, also in this case we can directly invoke Theorem \ref{thm_remov_sing_int} again to gain smoothness at $y$. \\

\indent For case 2), we need a preliminary remark. 
%Since the set $\mathcal{Y}$ is finite, we can certainly assume to work in a neighborhood $U$ of $y\in\mathcal{Y}$ in $(\mathcal{N},g)$ where convergence is smooth and graphical at all points except (at least a priori) $y$ itself. Hence $M_0\cap U$ consists of a finite number of connected components, whose closures may only intersect at the point $\left\{y\right\}$.
Choosing $\varepsilon\in (0,\varepsilon_0)$ small enough, we can assume that $M_0\cap B_{\varepsilon}(y)$ consists of a finite number of connected components, whose closures may only intersect at the point $\left\{y\right\}$ (the finiteness follows by the monotonicity formula). 
 Now, let $M^{\ast}_0\cap B_{\varepsilon}(y)$ be one such component. Notice that it must be a smooth locally graphical limit (away from the single point $y$) of components of $M_k\cap B_{\varepsilon}(y)$. If $y$ does not belong to the closure of $\partial M^{\ast}_0\cap B_{\varepsilon}(y)$ then one can argue as for case 1) to see that $y$ is a removable singularity in the interior of $M^{\ast}_0$. Thereby we are left with the case when $y$ is in the closure of $\partial M^{\ast}_0\cap B_{\varepsilon}(y)$. For $k$ large enough we exploit the smooth graphical convergence up to and including the boundary (away from the sole point $\left\{y\right\}$) to gain the very same lower bound on the first eigenvalue, $\lambda_1(M^{\ast}_0\cap B_\eps(y)) \geq - \mu$. Theorem \ref{thm_remov_sing} then implies that such $y$ is a removable singularity of the boundary of $M^{\ast}_0$. A posteriori, we conclude there is only one such component $M^{*}_{0}$ (by embeddedness of $M_0$, in the situation we are analysing it is impossible to find at the same time two components corresponding to the two different subcases described above). Thus, in case (2), we conclude that $y$ is a boundary removable singularity of $M_0$. \\

\indent Lastly, for case 3) we claim that $\lambda'_1(M_0\cap B'_\eps(y)) \geq - \mu$ for $\varepsilon\in (0,\varepsilon_0)$ sufficiently small. Assume this were false, then the usual argument would lead to finding positive numbers $\varepsilon_1>\varepsilon_2>\ldots>\varepsilon_{p+1}$ (with $\varepsilon_1<\rho_0$) so that, set $U'_i=B'_{\varepsilon_i}(y)\setminus B'_{\varepsilon_{i+1}}(y)$ one would have $\lambda'_1(M_0\cap U'_i)<-\mu$. By assumption, $(\partial M_0\setminus\left\{y\right\})\cap B_{\rho_0}(y)=\emptyset$. Thus, for $k$ sufficiently large $\partial M_k\cap B_{\rho_0}(y)\subset B_{\varepsilon_{p+1}}(y)$ and necessarily $\partial M_k\cap U_i=\emptyset$ for every choice of $i=1,\ldots, p$, where we denote $U_i=B_{\varepsilon_i}(y)\setminus B_{\varepsilon_{i+1}}(y)$. But then, possibly taking $k$ even larger we would have that $\lambda_1(M_k\cap U_i)<-\mu$ for every $i=1,\ldots, p$, which is impossible since $\left\{M_k\right\}\subset\mathfrak{M}_p(\Lambda,\mu)$. Hence, Theorem \ref{thm_remov_sing_int} applies.\\

\indent Thus, it follows from the above analysis that the closure of $M_{0}$, denoted henceforth by $M$, is a smooth compact embedded minimal hypersurface. $M$ must be connected as $\mu_{\infty}$ is the weak limit of connected embedded free boundary minimal hypersurfaces (by monotonicity). It follows  that the continuous and locally constant function that assigns to each point $x$ in $M_0$ the number of graphical sheets converging to it, is a constant integer $m\geq 1$ (actually, this shows that $\mu_{\infty}$ is the measure defined by $m \h^{n}(M\cap U)$ on open sets $U$ of $\mathcal{N}$). \\
\indent If all hypersurfaces $M_k$ have volume bounded by a constant $\Lambda$, then $m\h^{n}(M)$ will also be bounded by $\Lambda$. If it happens that $M$ is properly embedded, a standard cutoff argument ensures that $\lambda_p(M)\geq \lambda_{p}(M_0)\geq -\mu$, and thereby the proof that in this case $M\in\mathfrak{M}_{p}(\Lambda,\mu)$ is completed.\\
%\indent 
\end{proof}

\section{Stability and degeneration analysis for limit free boundary minimal hypersurfaces}\label{sec:thm4}

Here we shall present the proof of Theorem \ref{thm_mult_anal}.

\begin{proof}

\

\

\noindent\textbf{The case $m=1$.}\\
\indent First of all, we need to see that $\mathcal{Y}$ must be empty. Since we are assuming that $M$ is properly embedded, $\de M = M\cap \de \Nn$, then for any $\eps >0$ there is some uniform $r_0>0$ so that  
\[
\text{$\frac{\h^n(M\cap B_r(p))}{\om_n r^n} \leq \frac12 (1+\eps)$ for all $p\in M\cap \de \Nn$, $r<r_0$}
\]
and analogously for $p\in M\setminus\partial M$ with a constant $1+\varepsilon$ on the right-hand side. Hence, the same inequalities must hold (with a marginally worse constant) for $M_k$ (provided $k$ is large enough) by standard facts about convergence of measures.
The conclusion is now a consequence of the smooth version of Allard's regularity, Theorem \ref{thm_bound_allard}, for free boundary minimal hypersurfaces.

In the case of two-sided limits and $\mathcal{Y}=\emptyset$ then $m=1$  follows by connectedness of $M_k$ (if $\mathcal{Y}=\emptyset$ then the approaching $M_k$ can be written globally as graphs over $M$ - but if the multiplicity is then greater than one we have a contradiction).
 
When we have multiplicity one convergence, the construction of a non-trivial Jacobi field on the limit follows along similar lines to the argument below for higher multiplicities, so we omit the proof. \\

\noindent\textbf{The case $m\geq 2$ and $M$ is one-sided.}\\ 
\indent Suppose that the limit $M$ is one-sided and let $f:\widetilde{M}\to \Nn$
 be the two-sided minimal immersion associated to $M$. 
%First off we show that $m=2l$ for some $l\in \N$. Letting $\tilde{N}$ be the global unit normal over $\widetilde{M}$, by considering any $\Om\cemb \widetilde{M}\sm \mathcal{Y}$ we let $u_1$ be the maximal graph (i.e. the top sheet) of $M_k$ with respect to $\tilde{N}$, this accounts for two sheets in the convergence over $M$. We continue this process, letting $u_2$ be the second highest sheet with respect to $\tilde{N}$, and again $u_2$ will account for two sheets in the convergence over $M$. Eventually we cannot find any more graphs and the process stops after $l$ iterations for $l\in \N$. This easily yields that $m=2l$.  

We change the picture slightly in order to reduce the discussion to analysing two-sided limits. In order to visualise the construction it is best to picture the pulled-back bundle, $f^*NM$, which is trivial (by definition of $\widetilde{M}$). The zero section of this bundle describes $\widetilde{M}$, and a sufficiently small neighbourhood of the zero section, denoted $\widetilde{U}$, is in a two-to-one correspondence with a small tubular neighbourhood $U$ of the one-sided hypersurface $M$ in $\mathcal{N}$. Therefore, we may pull back the metric on $U$ and see $\widetilde{M}\emb\widetilde{U}$ as a two-sided properly embedded free boundary minimal hypersurface. For large enough $k$ (so that eventually $M_k$ lies inside this tubular neighbourhood of $\widetilde{M}$) we can equally consider the pull back of $M_k$, denoted $\widetilde{M}_k\emb \widetilde{U}$ which is again an embedded minimal hypersurface (possibly disconnected, but with at most two components). Nevertheless we still have $\widetilde{M}_k \to m\widetilde{M}$ locally smoothly and graphically on $\widetilde{M}\sm\widetilde{\mathcal{Y}}$ with $|\widetilde{\mathcal{Y}}|=2|\mathcal{Y}|$.

By the analysis of the previous case, it is easy to check that $\mathcal{Y}=\emptyset$ (together with $m\geq 2$) gives that $\widetilde{M}_k$ is disconnected (having two components) which implies that $m=2$ and $M_k \simeq \widetilde{M}$ eventually. 

Lastly, if we produce a (positive) Jacobi field on (the two-sided) $\widetilde{M}\emb \widetilde{U}$ then this corresponds to a (positive) Jacobi field on the immersion $\widetilde{M}\In \Nn$. In particular $\widetilde{M}$ must be stable, and since the first eigenfunction appears with multiplicity one, then $nullity(\widetilde{M})=1$. This in turn gives that $M$ must be strictly stable, that is to say $\lambda_1(M)>0$. Indeed, if that were not the case one could find an odd eigenfunction for the Jacobi operator of $\tilde{M}$ of eigenvalue $\lambda_1(M)$ so that $\lambda_1(M)\geq 0$ but on the other hand it cannot be $\lambda_1(M)=0$ for that would imply such function would in fact be a first eigenfunction for $\tilde{M}$, which is impossible given that it is sign-changing.\\

\noindent\textbf{The case $m\geq 2$ and $M$ is two-sided.}\\
\indent We will construct a positive Jacobi function over the limit in this case, which gives, by the same argument presented in the previous paragraph, that $nullity(M) =1$ and $M$ stable.  

Let $N$ be a choice of global unit normal and $X\in \mathfrak{X}_\de$ be an arbitrary extension of $N$ to $\Nn$ (so that $X(p)\in T_p \de \Nn$ for all $p \in \de \Nn$). Now let $\Phi(x,t)$ be the one-parameter family of diffeomorphisms associated with $X$, so that $\pl{\Phi}{h}(x,h) = X(\Phi(x,h))$. For a set $V\subset M$ we let $V_\dl$ be the $\dl$-thickening of $V$ with respect to $\Phi$ so that 
$$\text{$V_\dl := \{\Phi(x,h) :$ $x\in V$ and $|h|<\dl$\}}.$$ 
By assumption, on each $\Om \cemb M\sm \mathcal{Y}$ there must exist a family of $m$ functions which we order by height $\{u_k^1 < \dots < u_k^m\} \in C^{\infty}(\Om, \R)$ so that for $x\in \Om$,
$$M_k\cap \Om_\dl = \{\Phi(x,u_k^1(x)),\dots, \Phi(x,u_k^m(x)), x\in\Omega\}.$$
Setting $v_k(x,t) = tu^m_k(x) + (1-t)u^1_k(x)$ for $t\in [0,1]$ we denote $\Phi^k (x,t) = \Phi^k_t(x)= \Phi(x,v_k(x,t))$ and 
$$\Sigma_k(t) = (\Phi^k_t)_{\sharp}(\Om) = \{\Phi(x,v_k(x,t)), x\in \Om\}$$
with 
$$X_k(\Phi^k_t(x)):= \pl{\Phi^k_t}{t}(x) = (u^m_k(x)-u^1_k(x))X(\Phi^k_t(x)).$$
Note that $\Sigma_k(1)$ is the top leaf of $M_k$ over $\Om$ and $\Sigma_k(0)$ is the bottom leaf of $M_k$ over $\Om$ (and therefore are both free boundary and minimal) and $\Sigma_k(t)$ is a smooth one parameter family of hypersurfaces with boundary connecting the two.

Now consider any compactly supported ambient vector field $Z\in \mathfrak{X}_\de$. This gives rise to variations of $\Sigma_k(t)$, denoted 
\begin{eqnarray*}
\Sigma_k(t,s)
=(\Psi_s)_\sharp(\Sigma_k(t)) 
\end{eqnarray*}
where $\Psi_s$ is a family of diffeomorphisms induced by $Z$ i.e. $Z(x)=\pl{\Psi_s(x)}{s}\vlinesub{s=0}$. We have 
$$\left.\pl{}{s}\h^n(\Sigma_k(t,s))\right|_{s=0} = \int_{\Sigma_k(t)} div_{\Sigma_k(t)}(Z)\id\h^n$$
and this is a smooth function of $t$ by the definition of $\Sigma_k(t)$. We also know that this quantity is null when $t=0,1$ for all $k$, thus 
$$\int_0^1 \pl{}{t}\left.\pl{}{s} \h^n(\Sigma_k(t,s))\right|_{s=0}\id t = 0.$$ 

Therefore we have, by Appendix \ref{app:2ndvar}

\begin{align*}
0=&\int_0^1 \pl{}{t}\left.\pl{}{s}\h^n(\Sigma_k(t,s))\right|_{s=0} \id t\nonumber\\
=& \int_{0}^{1} \left( \int_{\Sigma_k(t)} \la\Db (X^\bot) , \Db (Z^\bot) \ra - Ric_\mathcal{N} (X^\bot, Z^\bot) - |A|^2\la X^\bot , Z^\bot \ra \id \h^n \right. \\
 & \quad \quad \left. + \int_{\de \Sigma_k(t)} \la \nabla_{X^{\perp}} Z^{\perp}, \nu_e \ra \id \h^{n-1}+ \int_{\Sigma_k(t)}\Xi_1(X,Z,H) \id \h^n + \int_{\partial \Sigma_k(t)} \Xi_2(X,Z,H,\nu,\nu_e) \id \h^{n-1}\right)  dt.\label{2ndv}
\end{align*}

By pulling everything back to $\Om$ and assuming that $Z|_{\Om} = \eta N$ where $N$ is as above and $\eta$ is smooth we have (setting $\ti{h}_k = u^m_k - u^1_k$)
 \begin{eqnarray*}
0 &=&\int_0^1\left[\int_{\Om} \D \ti{h}_k \cdot \D \eta - \ti{h}_k\eta(|A|^2 + Ric_\mathcal{N} (N, N))  + \ti{W}_k(t)(\ti{h}_k,\eta)\id \h^n\right. \nonumber\\
&& +\left.\int_{\de \Nn \cap \Om} {\rm{II}}(N,N)\ti{h}_k\eta + \ti{w}_k(t)(\ti{h}_k, \eta) \id \h^{n-1} \right]\id t
\end{eqnarray*}
where $\ti{W}_k$ is linear and at most first order in its arguments and whose coefficients go to zero smoothly in $k$, equally $\ti{w}_k$ is first order with coefficients going to zero smoothly. 
Letting $W_k = \int_0^1 \ti{W}_k (t) \id t$ and similarly $w_k = \int_0^1 \ti{w}_k(t)\id t$ by Fubini's theorem we are left with 
 
\begin{eqnarray}\label{eq_approx}
0 &=&\int_{\Om} \D \ti{h}_k \cdot \D \eta - \ti{h}_k\eta(|A|^2 + Ric_{\mathcal{N}} (N, N))  + W_k(\ti{h}_k,\eta)\id \h^n \nonumber\\
&& +\int_{\de \Nn \cap \Om} {\rm{II}}(N,N)\ti{h}_k\eta + w_k(\ti{h}_k, \eta) \id \h^{n-1}.
\end{eqnarray}

\textbf{Claim 1:} Fixing $z\in \Om\setminus\partial M$ and $h_k(x):= \ti{h}_k(z)^{-1} \ti{h}_k(x)$ we have that $h_k$ is bounded in $C^l$ for all $l$ on any compact subset of $\Om$. \\

Assuming the claim we can take a subsequence so that $h_k \to h$ smoothly and using \eqref{eq_approx} we end up with $h$ being a solution to 
$$\twoparteq{-\Dl_M h - (|A|^2 + Ric_{\Nn}(N,N)) h = 0}{\text{in $\Om$}}{\pl{h}{\nu} = -{\rm{II}}(N,N)h}{\text{on $\de M\cap \Om$.}}$$
Since $h(z) =1$ and $h\geq 0$ we have in fact that $h>0$ on the interior of $\Om$ by maximum principle. Thus, by taking an exhaustion of $M$ by sets $\Om\cemb M\sm \mathcal{Y}$ we end up with a solution $h:M\sm\mathcal{Y}\to \R$ to the Jacobi equation and $h>0$ on $M\sm \mathcal{Y}$.\\

\textbf{Claim 2:} $h$ is uniformly bounded and therefore extends to a smooth solution of 
$$\twoparteq{-\Dl_M h - (|A|^2 + Ric_{\Nn}(N,N)) h = 0}{\text{in $M$}}{\pl{h}{\nu} = -{\rm{II}}(N,N)h}{\text{on $\de M$.}}$$
By the interior and boundary versions of the maximum principle, the function $h$ must be positive everywhere.

\

It remains to prove the two claims.\\

\textbf{Proof of Claim 1:} We will first show that $h_k$ is uniformly bounded on compact subsets of $\Om$. Once we know this then standard elliptic regularity theory (and bootstrapping) will allow us to upgrade a $C^0$ estimate to $C^l$ control for any $l$ - i.e. we can apply elliptic regularity theory with (oblique) boundary conditions (see e.g. \cite[Lemma 6.29]{gt}) to obtain smooth control on $\ti{h}_k$ for any $V \cemb \Om$.

To see that $h_k$ is bounded we first note that for any $V\cemb \Om\sm \de \Nn$, $h_k$ is a positive solution to an equation with uniform control on the ellipticity - thus the usual Harnack estimate yields the existence of some $C=C(V)$ so that 
$$\sup_V h_k \leq C h_k(z) = C.$$

For a contradiction, suppose that such an estimate fails for $V\cemb \Om$ with $z\in V$. In this case let $x_k$ be so that $\sup_V h_k = h_k(x_k)\to \infty$, and therefore by the interior Harnack estimate we must have $x_k\to x_\infty \in \de M$. By setting
$$f_k (x) = h_k(x_k)^{-1} h_k(x)$$ we now have that $f_k$ is uniformly bounded, thus as above $f_k$ is smoothly controlled and converges to a solution $f$ of $$\twoparteq{-\Dl_M f - (|A|^2 + Ric_{\Nn}(N,N)) f = 0}{\text{in $V$}}{\pl{f}{\nu} = -{\rm{II}}(N,N)f}{\text{on $\de M\cap V$.}}$$
But now $f(z) = 0$ and $f\geq 0$ so we must have $f\equiv 0$ by the maximum principle, which contradicts $f(x_\infty) =1$ and we have proved Claim 1. \\

\textbf{Proof of Claim 2:}
If $y\in\mathcal{Y}$ is in the interior of $M$ then the proof is exactly as in \cite[Claim 6]{Sha15} (or, one could adapt the argument below using an interior foliation trick, for a shorter version). We deal with the case that $y\in \de M$;  with the notation as in Section \ref{app:fol}, take coordinates about $y$, i.e. $S^r_{\theta}(y)\In M$ and extend these to $C^r_{\theta}\In \Nn$ via the flow $\Phi$ with respect to $X$. We will assume that $r$ and $\theta$ are sufficiently small so that we can apply Proposition \ref{pro:EuclFol2}. On $\Gamma_2$ and for $k$ sufficiently large we can make the functions $u_k^1, u_k^m$ as small as we like in any smooth norm, and we take two local foliations with respect to $w_k^1, w_k^m$ which are two functions on $S^r_\theta$ so that $\|w_k^i\|_{Y}\leq \|u_k^i\|_{C^{2,\al}(\Gamma_2)}$ and $w_k^i|_{\Gamma_2} = u^i_k$. We claim that $M_k\cap C^r_{\theta}$ lies between the graphs $w_k^i + u_k(0,g,w_k^i)$ - and once we prove this we will then have, wherever $\ti{h}_k$ is defined $D_k\In S^r_{\theta}$, some uniform $C>1$ (by Remark \ref{rem:comp}) so that
$$ \sup_{D_k} \ti{h}_k \leq C \sup_{\Gamma_2} \ti{h}_k$$
and we can conclude that each $h_k$ is uniformly bounded by a constant independent of $k$, wherever it is defined; thus $h$ is bounded and we are done.  

We will check that $M_k\cap C^r_{\theta}$ lies beneath the graph $w_k^m + u_k(0,g,w_k^m)$. Let $v_{k,t}=t+w_k^m + u_k(t,g,w_k^m)$ be the local foliation. Notice that by taking $r$ sufficiently small and $k$ sufficiently large (so that the Hausdorff distance from $M_k$ to $M$ is uniformly small), Proposition \ref{pro:EuclFol2} guarantees that we can foliate for $t\in [-\fr{r}{2},\fr{r}{2}]$. Now suppose to the contrary that for $t>0$ the graph of $v_{k,t}$ intersects $M_k$. Letting $T$ be the largest $t$ such that this intersection is non-empty yields that the $v_{k,T}$ touches $M_k$ tangentially at a boundary or interior point (\emph{not} along $\Gamma_2$) and lies completely on one side. By the interior and boundary maximum principle we have that $v_{k,T}$ coincides with  $M_k$ which contradicts that $y$ is a point of bad convergence. This completes the proof. 

\end{proof}

\section{A bumpy metric theorem for free boundary minimal hypersurfaces}\label{sec:bumpy}

  In this section, we prove that free boundary minimal hypersurfaces are \textsl{generically} non-degenerate, meaning that they do not have Jacobi fields for a generic choice of the Riemannian metric on the ambient manifold. The precise statement of our result is given in the introduction, see Theorem \ref{thm:bumpy}. In that respect we shall add a few remarks and clarifications.

  \begin{rmk} \label{rmk:cqtop}
  	There we have denoted by $\Gamma^{q}$ the set of $C^{q}$ metrics on $\mathcal{N}$ endowed with the $C^{q}$ topology.
  	$\Gamma^{q}$ has the structure of an open cone inside a complete metric space, in fact a smooth Banach space. Also, recall that the $C^{\infty}$ topology on $\Gamma^{\infty}$ is the smallest topology that makes the inclusions $\Gamma^{\infty} \subset\Gamma^{q}$ continuous for all finite $q$.
  \end{rmk}
  
  \begin{rmk}\label{rmk:Baire}
  	A subset of a complete metric space is called \textit{comeagre} when it contains a countable intersection of open dense subsets (or, equivalently, if its complement is \textit{meagre} or a \textit{first category set} in the sense that it is contained in the union of countably many closed sets with empty interior). By Baire theorem, comeagre subsets are dense.
  \end{rmk}
  
  An important tool in proving Theorem \ref{thm:bumpy} is the following infinite-dimensional version of Sard's lemma due to S. Smale:
  
  \begin{thm}[Cf. Theorem 1.2 in \cite{Sma65}]\label{thm:Sma}
  	Let $X, Y$ be connected Banach manifolds, with $X$ having a countable base, and let $F:X\to Y$ be a Fredholm map of class $C^r$ with $r>\max\left\{\textrm{Ind}(F), 0\right\}$. Then the set of regular values for $F$ is comeagre in $Y$.
  \end{thm}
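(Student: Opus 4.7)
The plan is to reduce the infinite-dimensional statement to the classical finite-dimensional Sard theorem via a local normal form for Fredholm maps, and then globalize using the second-countability hypothesis on $X$.

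First, I would reduce to a local statement. Because $X$ has a countable base, it can be covered by countably many open sets $\{U_i\}$ each contained in a coordinate chart. Since the countable intersection of comeagre sets is comeagre, it suffices to prove that for each $x_0 \in X$ there is a neighborhood $U$ of $x_0$ and an open neighborhood $V$ of $F(x_0)$ in $Y$ such that the regular values of $F|_U$ inside $V$ form a comeagre subset of $V$. (Points outside $\overline{F(U)}$ are vacuously regular, so the subset in $Y$ obtained from $U$ is comeagre in the appropriate open piece; one then intersects over a countable cover.)

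Next, I would set up the local normal form. Let $K := \ker dF_{x_0}$ and pick a closed complement $V \subset T_{x_0}X$, so that $T_{x_0}X = K \oplus V$ with $\dim K < \infty$. Since $dF_{x_0}$ has closed range of finite codimension, pick a finite-dimensional complement $C$ to $\operatorname{Im}(dF_{x_0})$ in $T_{F(x_0)}Y$, and a closed subspace $W \cong \operatorname{Im}(dF_{x_0})$. Using charts adapted to these splittings, $F$ is represented near $x_0$ as a map $K \times V \to C \times W$, $(k,v) \mapsto (f_1(k,v), f_2(k,v))$, with $\partial_v f_2(0,0): V \to W$ a Banach-space isomorphism. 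The implicit function theorem gives a $C^r$ change of coordinates in the source converting $F$ into the form
\begin{equation*}
\widetilde F(k,w) = (g(k,w),\,w), \qquad g: K \times W \to C,
\end{equation*}
where $K$ and $C$ are finite-dimensional of dimensions $\dim K$ and $\dim C$ with $\dim K - \dim C = \operatorname{Ind}(F)$. In these coordinates $d\widetilde F_{(k,w)}$ is surjective if and only if $\partial_k g(k,w): K \to C$ is surjective, so the critical points of $F$ near $x_0$ coincide with the critical points of the family $\{g(\cdot,w)\}_{w \in W}$, and likewise for critical values in the $C$-direction above each $w$.

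Now I would apply the classical Sard theorem to each finite-dimensional slice. Since $g(\cdot,w): K \to C$ is $C^r$ with $r > \max\{\dim K - \dim C, 0\} = \max\{\operatorname{Ind}(F), 0\}$, the classical Sard theorem implies that the critical values $\Sigma_w \subset C$ of $g(\cdot,w)$ have Lebesgue measure zero in $C$ for every $w$. The local critical-value set of $\widetilde F$ is then $\Sigma := \{(c,w) \in C\times W : c \in \Sigma_w\}$. I would exhaust the chart by countably many compact subsets $Q_n \subset K \times W$ and observe that $\Sigma_n := \widetilde F(\operatorname{Crit}(\widetilde F)\cap Q_n)$ is compact, hence closed in $C \times W$. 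Each $\Sigma_n$ has empty interior: otherwise it would contain an open box $B_C \times B_W$, forcing $B_C \subset \Sigma_w$ for all $w \in B_W$, contradicting $\mathrm{meas}(\Sigma_w) = 0$. Therefore $\Sigma = \bigcup_n \Sigma_n$ is meagre, its complement (the local set of regular values) is comeagre, and gluing over the countable atlas yields the conclusion on $Y$.

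The main obstacle is carrying out the local normal form cleanly enough that the critical-point analysis reduces exactly to the finite-dimensional $g(\cdot,w)$, together with pinning down the sharp smoothness requirement $r > \max\{\operatorname{Ind}(F), 0\}$: this is precisely the hypothesis under which classical Sard applies to $g(\cdot,w): K \to C$, and any weaker assumption on $r$ would fail already in finite dimensions by the Whitney examples.
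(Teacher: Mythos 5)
First, a contextual note: the paper does not prove this statement at all --- it is quoted verbatim from Smale's 1965 paper (Theorem 1.2 there) and used as a black box in Section \ref{sec:bumpy}. So the only meaningful comparison is with Smale's original argument, which your proposal essentially reconstructs: localize via second countability, put $F$ into the normal form $\widetilde F(k,w)=(g(k,w),w)$ with $K,C$ finite-dimensional via the implicit function theorem, observe that criticality of $\widetilde F$ at $(k,w)$ is equivalent to non-surjectivity of $\partial_k g(k,w):K\to C$, and apply classical Sard to each slice $g(\cdot,w)$ (the hypothesis $r>\max\{\mathrm{Ind}(F),0\}=\max\{\dim K-\dim C,0\}$ is exactly what classical Sard needs). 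All of that is correct, including the slicing argument showing that a closed set of critical values must have empty interior.

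There is, however, one genuine gap: the step ``I would exhaust the chart by countably many compact subsets $Q_n\subset K\times W$.'' This is impossible when $W$ is infinite-dimensional. No open subset of an infinite-dimensional Banach space is $\sigma$-compact: compact subsets have empty interior there, so a countable union of them is meagre and cannot contain a nonempty open set. Consequently you cannot obtain closedness of the local critical-value set by covering the domain with compacta. The correct replacement --- and the point of Smale's ``Fredholm maps are locally proper'' lemma, which the paper itself invokes later in the proof of Theorem \ref{thm:bumpy} --- is to shrink the chart to a product $\overline{B}_K\times \overline{B}_W$ with $\overline{B}_K$ a \emph{compact} ball in the finite-dimensional factor $K$. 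On such a set $\widetilde F$ is proper: if $\widetilde F(k_j,w_j)=(g(k_j,w_j),w_j)$ converges, then $w_j$ converges because the second coordinate is the identity in $w$, and $k_j$ subconverges by compactness of $\overline{B}_K$. Hence $\widetilde F\bigl(\mathrm{Crit}(\widetilde F)\cap(\overline{B}_K\times\overline{B}_W)\bigr)$ is closed, and your slicing argument then shows it has empty interior. One such neighborhood per point, together with the Lindel\"of property of $X$, yields the countable union of closed nowhere dense sets covering $F(\mathrm{Crit}(F))$. With that single repair the proof is complete and coincides with Smale's.
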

  
  \begin{rmk}
  	A map $F:X\to Y$ is said Fredholm if for each $x\in X$ the linear map $dF:T_{x}X\to T_{F(x)}Y$ is a Fredholm operator. In such case, the index of $F$, denote by $\textrm{Ind}{(F)}$ is the index of $dF(x)$ for some $x$ (when $X$ is connected, this value shall not depend on the point $x$).	
  \end{rmk}
  
  We have explicitly stated this well-known result in order to stress the role of separability of the domain, for indeed if $X$ is metrizable then the assumption that its topology has a countable basis is equivalent to being separable. On the other hand, we will work with spaces of maps with finite regularity (in fact with H\"older maps, to gain suitable Schauder estimates up to the boundary) to benefit from the Banach manifold structure of these spaces.  These facts motivate the setup we are about to present.\\
  
  \indent For $3\leq j+1\leq q, \ \alpha\in (0,1)$ and a fixed compact, connected, smooth manifold $M^n$ with non-empty boundary, we consider the space of maps $C^{j,\alpha}(M,\mathcal{N})$. For $w\in C^{j,\alpha}(M,\mathcal{N})$ we let 
  \[
  [w]:=\left\{w\circ\varphi \ | \ \varphi\in \textrm{Diff}(M) \right\}
  \]
  namely the set of all reparametrizations of the same map modulo smooth diffeomorphisms of $M$. At this stage, we shall consider the \textsl{set} $\mathcal{PE}^{j,\alpha}=\mathcal{PE}^{j,\alpha}(M,\mathcal{N})$ consisting of the equivalence classes $[w]$ for $w:M\to \mathcal{N}$ a \textsl{proper embedding}, and hence define
  \[
  \mathcal{S}^{q,j,\alpha}:= \left\{(\gamma,[w])\in\Gamma^q\times \mathcal{PE}^{j,\alpha} \ | \ w \ \textrm{is} \ \gamma-\textrm{stationary} \ \textrm{and free boundary} \right\}.
  \]
  Observe that, if we denote by $d_{j,\alpha}:C^{j,\alpha}(M,\mathcal{N})\times C^{j,\alpha}(M,\mathcal{N})\to\R$ a metric inducing the topology on $C^{j,\alpha}(M,\mathcal{N})$ we can set 
  \[
  d([w_1],[w_2]):=\inf_{\tilde{w}_1\in [w_1],\tilde{w}_2\in [w_2]} d_{j,\alpha}(\tilde{w}_1,\tilde{w}_2)
  \]
  and this is easily seen to be a distance on the quotient $\mathcal{PE}^{j,\alpha}$, compatible with its topology (which is Hausdorff). Hence, $\mathcal{S}^{q,j,\alpha}$ inherits a topology as a subset of the product $\Gamma^q\times \mathcal{PE}^{j,\alpha}.$
  
   \begin{rmk}\label{rmk:regul}
   	By elliptic regularity theory, a free boundary minimal immersion $w\in C^{j,\alpha}(M,\mathcal{N})$  into $(\mathcal{N},\gamma)$, where $\gamma\in \Gamma^{q}$, is of class $C^{q,\beta}$ for any $\beta\in (0,1)$. This simple remark implies that the \textsl{set} $\mathcal{S}^{q,j,\alpha}$ does not depend on $j,\alpha$, even though its topology does. Furthermore, it follows that if $(\gamma,[w])\in\mathcal{S}^{q,j,\alpha}$ then the Jacobi fields on $M$ will be of class $C^{k,\beta}$ for all $k< q$ and $\beta\in (0,1)$. 
   \end{rmk}	
   
  \indent At this stage, we let $\pi^{q,j,\alpha}_{\mathcal{S}}:\mathcal{S}^{q,j,\alpha}\to\Gamma^q$ be the projector onto the first factor, i. e. $\pi^{q,j,\alpha}_{\mathcal{S}}(\gamma,[w])=\gamma$. As was first observed by White \cite{Whi87A}, this map encodes the relevant information about degenerate minimal hypersurfaces. The precise content of this assertion is provided by the following `structure theorem', of independent interest.

  \begin{thm}\label{thm:struct}
  	Let $M^n, \ \mathcal{N}^{n+1}$ be $C^{\infty}$ compact manifolds with boundary and, correspondingly, let $\Gamma^q, \mathcal{S}^{q,j,\alpha}$ and $\pi^{q,j,\alpha}_{\mathcal{S}}:\mathcal{S}^{q,j,\alpha}\to\Gamma^q$ be defined as above. Then $\mathcal{S}^{q,j,\alpha}$ is a separable Banach manifold of class $C^{q-j}$ and $\pi^{q,j,\alpha}_{\mathcal{S}}$ is a $C^{q-j}$ Fredholm map of Fredholm index 0. Furthermore, given any $(\gamma,[w])\in\mathcal{S}^{q,j,\alpha}$ the nullity of $w$ equals the dimension of the kernel of the linear map $D \pi^{q,j,\alpha}_{\mathcal{S}}(\gamma,[w])$ (so that, in particular, $w(M)$ admits a non-trivial Jacobi field if and only if 
  	the point $(\gamma,[w])$ is critical for $\pi^{q,j,\alpha}_{\mathcal{S}}$). 	
  \end{thm}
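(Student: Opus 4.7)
Following the approach of White \cite{Whi87A,Whi91} for closed ambient manifolds, the plan is to encode both the stationarity and the orthogonality condition into the zero set of a single Banach-space map $\mathcal{F}$, apply the implicit function theorem to produce the manifold structure on $\mathcal{S}^{q,j,\alpha}$, and then read off the Fredholm and nullity statements from a short analysis of the kernel and cokernel of $D\mathcal{F}$ and its partial derivatives. The new ingredient compared to \cite{Whi87A,Whi91} is the need to incorporate the constraint $\partial M \subset \partial\mathcal{N}$ into both the parametrization and the target of $\mathcal{F}$, which is done exploiting a vector field in the class $\mathfrak{X}_\de$.

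Fix $(\gamma_0,[w_0]) \in \mathcal{S}^{q,j,\alpha}$; by Remark \ref{rmk:regul} we may take $w_0$ of regularity $C^{q,\beta}$, so $M_0 := w_0(M)$ is a smooth enough, properly embedded free boundary minimal hypersurface in $(\mathcal{N},\gamma_0)$. Fix a $\gamma_0$-unit normal field $N_0$ along $M_0$ (if $M_0$ is one-sided, work upstairs on its two-sided cover and descend by a free $\Z/2$-action) and extend it to a global vector field $X \in \mathfrak{X}_\de(\mathcal{N})$ with flow $\Phi_t$; the condition $X \in \mathfrak{X}_\de$ guarantees that for small $v \in C^{j,\alpha}(M_0)$ the hypersurface $M_v := \{\Phi_{v(x)}(x) : x \in M_0\}$ is properly embedded with $\partial M_v \subset \partial\mathcal{N}$. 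I set
\begin{equation*}
\mathcal{F}: \Gamma^q \times C^{j,\alpha}(M_0) \longrightarrow C^{j-2,\alpha}(M_0) \times C^{j-1,\alpha}(\partial M_0), \quad \mathcal{F}(\gamma,v) = \bigl(H_\gamma(M_v),\ \theta_\gamma(M_v)\bigr),
\end{equation*}
where $H_\gamma$ is the scalar mean curvature of $M_v$ in $\gamma$ and $\theta_\gamma$ records the deviation of $\partial M_v$ from meeting $\partial\mathcal{N}$ $\gamma$-orthogonally (for instance, the inner product of the $\gamma$-conormal with the outward $\gamma$-normal to $\partial\mathcal{N}$, pulled back via the parametrization). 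The map $\mathcal{F}$ is of class $C^{q-j}$ in the prescribed Banach structures and $\mathcal{F}(\gamma_0,0) = 0$.

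A direct computation identifies $D_v\mathcal{F}(\gamma_0,0)$ with the scalar Jacobi operator $\mathcal{L}_{M_0}$ of Subsection \ref{subs:spectrum} together with the oblique boundary condition appearing in the eigenvalue problem $(*)$: this system is elliptic and admits a self-adjoint $L^2$-realization, so on the chosen H\"older spaces it is Fredholm of index $0$, with kernel and cokernel both canonically isomorphic to the space of free boundary Jacobi fields on $M_0$, hence both of dimension $nullity(w_0)$. For the implicit function theorem I need the full derivative $D\mathcal{F}(\gamma_0,0)$ to be surjective, which reduces to showing that $D_\gamma \mathcal{F}(\gamma_0,0)$ covers this cokernel. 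Given any nonzero Jacobi field $\phi$, unique continuation supplies an interior point $p \in M_0 \setminus \partial M_0$ with $\phi(p) \neq 0$; a conformal perturbation $\dot\gamma = f\gamma_0$ with $f$ a bump supported in a ball $B \cemb \mathcal{N}\setminus\partial\mathcal{N}$ around $p$ then produces, via the standard formula for the derivative of the mean curvature under a metric variation, a $\dot\gamma$ whose image under $D_\gamma\mathcal{F}(\gamma_0,0)$ pairs non-trivially with $\phi$ in $L^2(M_0)$ while leaving the boundary component of $\mathcal{F}$ untouched. Therefore $D\mathcal{F}(\gamma_0,0)$ is surjective, $\mathcal{F}^{-1}(0)$ is a $C^{q-j}$ Banach submanifold, and a standard slice argument for the action of the diffeomorphism group of $M$ (preserving $\partial M$) on proper embeddings identifies a neighborhood of $[w_0]$ in $\mathcal{PE}^{j,\alpha}$ with small normal sections, yielding the required chart on $\mathcal{S}^{q,j,\alpha}$; separability is inherited from that of $\Gamma^q$ via the locally finite-dimensional-fibered projection $\pi^{q,j,\alpha}_{\mathcal{S}}$.

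Finally, the tangent space to $\mathcal{S}^{q,j,\alpha}$ at $(\gamma_0,[w_0])$ is $\ker D\mathcal{F}(\gamma_0,0)$, on which $D\pi^{q,j,\alpha}_{\mathcal{S}}$ acts as $(\dot\gamma,\dot v) \mapsto \dot\gamma$. Its kernel consists of pairs $(0,\dot v)$ with $D_v\mathcal{F}(\gamma_0,0)(\dot v)=0$, i.e. of free boundary Jacobi fields on $M_0$, so $\dim\ker D\pi^{q,j,\alpha}_{\mathcal{S}} = nullity(w_0)$. A short diagram chase identifies $\mathrm{coker}\,D\pi^{q,j,\alpha}_{\mathcal{S}}$, via $\dot\gamma \mapsto [D_\gamma\mathcal{F}(\gamma_0,0)(\dot\gamma)]$, with $\mathrm{coker}\,D_v\mathcal{F}(\gamma_0,0)$, of the same dimension $nullity(w_0)$ by self-adjointness; hence $\pi^{q,j,\alpha}_{\mathcal{S}}$ is Fredholm of index zero and $(\gamma_0,[w_0])$ is a critical point precisely when $w_0(M)$ admits a non-trivial Jacobi field. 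I expect the hard part to be the surjectivity step: the free boundary condition forces a second factor in the codomain of $\mathcal{F}$ and boundary integrals in the cokernel pairing, and one has to verify that an interior metric perturbation really pairs non-trivially with every Jacobi field without touching the boundary data; the unique continuation property of the Jacobi operator with oblique boundary conditions is precisely what makes this possible.
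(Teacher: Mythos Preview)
Your proposal is correct and follows essentially the same route as the paper: encode minimality plus orthogonality as the zero set of a map $(H,\Theta)$ into $C^{j-2,\alpha}(M)\times C^{j-1,\alpha}(\partial M)$, verify that the partial linearization is elliptic with oblique boundary condition hence Fredholm of index $0$, use White's interior conformal deformation to achieve full surjectivity of $D(H,\Theta)$, and read off the Fredholm and nullity claims for the projection from the characterization of $\ker D(H,\Theta)$. The one place where the paper is more careful is separability: your claim that it is ``inherited from $\Gamma^q$ via the locally finite-dimensional-fibered projection'' is not a standard fact (a Banach manifold locally modeled on separable spaces need not be globally separable without a countable atlas), and the paper instead uses elliptic regularity to note that $\mathcal{S}^{q,j,\alpha}$ is, as a set, independent of $j,\alpha$ and hence embeds in the separable metric space $\Gamma^q\times (C^q(M,\mathcal{N})/\!\simeq)$.
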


  The next subsection is devoted to the proof of Theorem \ref{thm:bumpy} given Theorem \ref{thm:struct}, the latter being the object of Subsection \ref{subs:struct}.
  
  \begin{rmk}
  	For given $n\geq 2$ there are only countably many diffeomorphisms types of compact, connected $n$-manifolds with boundary, for well-known results dating back to Cairns and Whitehead (see \cite{Cai35, Whi40}) ensure that a differentiable manifold admits a (essentially unique) piecewise linear structure. Therefore, it is enough to prove Theorem \ref{thm:bumpy} considering only embeddings of a given compact, connected, smooth manifold $M$.
  \end{rmk}
  
   \begin{rmk} \label{rmkdiag}
  	Denoting by $\iota$ obvious inclusions, the following diagrams of continuous maps is commutative:
  	
  	\begin{equation*}
  	\begin{CD}
  	\mathcal{S}^{q,q-1,\alpha}    @> \pi^{q,q-1,\alpha} >>  \Gamma^{q} \\
  	@A \iota AA                                                     @A \iota AA  \\
  	\mathcal{S}^{q+1,q,\alpha} @> \pi^{q+1,q,\alpha} >>  \Gamma^{q+1}.  \\
  	\end{CD}
  	\end{equation*}
  	In particular, for example, if $U^{q}$ is an open subset of $\mathcal{S}^{q,q-1,\alpha}$ restricted to which the map $\pi^{q,q-1,\alpha}$ is a homeomorphism onto its image $\pi^{q,q-1,\alpha}(U^q)$, then $U^{q+1}:= \iota^{-1}(U^{q})$ is an open subset of $\mathcal{S}^{q+1,q,\alpha}$ restricted to which $\pi^{q+1,q,\alpha}$ is a homeomorphism onto its image $\pi^{q+1,q,\alpha}(U^{q+1})$, since
  	$\pi^{q+1,q,\alpha}(U^{q+1})  = \iota^{-1}(\pi^{q,q-1,\alpha}(U^q))$. Similar elementary observations will be repeatedly used in the next subsection.
  \end{rmk}
  
  We also need to add a simple but useful observation concerning the dependence on the parameters $q,j,\alpha$ of the objects defined above. As far as Subsection \ref{subs:bumpy} is concerned it is in fact enough to choose $j=q-1$, which we will always do without further comments. Also, the constant $\alpha\in (0,1)$ is understood to be fixed once and for all, for any space we shall deal with. For simplicity of notation, let us write $\mathcal{S}^{q} = \mathcal{S}^{q,q-1,\alpha}(M)$ for each $q\geq 3$ and denote by $\pi^{q} : \mathcal{S}^{q} \rightarrow \Gamma^{q}$ the projection map considered in the Structure Theorem.
  
  \subsection{Proof of Theorem \ref{thm:bumpy}, the bumpy metric theorem}\label{subs:bumpy}
  
  We shall establish the bumpy metric theorem stated above following closely the arguments of  \cite{Whi15} (see, in particular, the proof of Theorem 2.9). Let us first stress an important point.
  
  \begin{rmk} \label{rmk:fund}
  	It is a straightforward consequence of Remark \ref{rmk:regul} that $\mathcal{B}^{q+1}=\mathcal{B}^{q}\cap\Gamma^{q+1}$ and $\mathcal{B}^{\infty}=\mathcal{B}^{q}\cap\Gamma^{\infty}$ for all integers $q\geq 3$.
  \end{rmk}
  
  Let us now present the argument in question.
  
  \begin{proof}
  	Let us fix an arbitrary, compact, smooth manifold with boundary $M$. Given $q\geq 3$ or $q=\infty$, we define $\mathcal{B}^{q}(M)$ to be the set of all $\gamma\in \Gamma^{q}$ such that no free boundary minimal proper embedding $w : M \rightarrow \mathcal{N}$ of class $C^q$, and no free boundary minimal proper immersion of class $C^q$ of the form $w\circ \pi : \widetilde{M} \rightarrow \mathcal{N}$, where $\pi : \widetilde{M} \rightarrow M$ is a smooth finite cover of $M$ and $w : M \rightarrow \mathcal{N}$ is a proper embedding of class $C^q$, admits a non-trivial Jacobi field. Notice that, as in Remark \ref{rmk:fund}, $\mathcal{B}^{q+1}(M)=\mathcal{B}^{q}(M)\cap\Gamma^{q+1}$ and $\mathcal{B}^{\infty}(M) = \mathcal{B}^{q}(M)\cap \Gamma^{\infty}$ for all integers $q\geq 3$.\\
  	\indent We first consider the case of finite $q \geq 3$. \\
  	
  	\noindent \textbf{Claim:} For each integer $q\geq 3$, there exists a sequence $\{U^{q}_{i}\}_{i\in \mathbb{N}}$ of subsets of $\Gamma^{q}$ such that:
  	\begin{itemize}
  		\item[a)] $U^{q}_{i}$ is an open, dense subset of $\Gamma^{q}$ for all $q\geq 3$ and $i\geq 0$.
  		\item[b)] $U^{q+1}_{i} = U^{q}_{i}\cap \Gamma^{q+1}$ for all $q\geq 3$ and $i\geq 0$.
  		\item[c)] $\mathcal{B}^{q}(M) \supset \bigcap_{i=1}^{\infty}{U^{q}_{i}}$.\\
  	\end{itemize}
  	
  	First of all, let us observe that by Theorem \ref{thm:struct}, $\pi^{q}$ is a $C^{1}$ Fredholm map of Fredholm index zero and, in particular, it is locally proper. Furthermore, if we denote by $\mathcal{R}^q\subset\mathcal{S}^q$ the set of regular points (i.e., points where the differential of $\pi^q$ is surjective) and its complement, the set of critical points, by $\mathcal{C}^q$, we get at once that $\mathcal{R}^q$ is open in $\mathcal{S}^q$ (since the Fredholm index of $\pi^q$ is zero, each regular point is actually a point where the derivative of $\pi^q$ is an isomorphism, so that the Inverse Function Theorem applies).  \\
  	\indent Let $\mathcal{C}^{q}_{p}$ be the set of all pairs $(\gamma,[\omega])\in \mathcal{S}^q$ such that there exists a smooth $p$-sheeted covering $\pi : \widetilde{M} \rightarrow M$ such that $w\circ \pi$ admits a Jacobi field (cf. \cite{Whi15}, Definition 2.5). Again by elliptic regularity, we have $\mathcal{C}^{q+1}_{p} = \mathcal{C}^{q}_{p}\cap \mathcal{S}^{q+1}$. Furthermore, using the characterization provided by Theorem \ref{thm:struct}, namely the fact that $(\gamma,[w]) \in \mathcal{S}^{q}$ is a critical point
  	of $\pi^q$ if and only if $w(M)$ admits a non-trivial Jacobi field, and the digression on the regularity of these Jacobi fields given in Remark \ref{rmk:regul} one can then check that $\mathcal{C}^{q+1} = \mathcal{C}^{q} \cap \mathcal{S}^{q+1}$.
  	Notice also that
  	\begin{equation*}
  	\mathcal{B}^{q}(M) =\bigcap_{p=1}^{\infty} (\Gamma^{q} \setminus \pi^q(\mathcal{C}^{q}_{p}))=\Gamma^q\setminus \bigcup_{p=1}^{\infty} \pi^q(\mathcal{C}^q_p)\supset \Gamma^q\setminus \left[\pi^q(\mathcal{C}^q)\cup \bigcup_{p=1}^{\infty}\pi^q(\mathcal{C}^{q}_p\setminus\mathcal{C}^q)\right].
  	\end{equation*}
  	It follows from the proof of Lemma 2.6 of \cite{Whi15} that, for each positive integer $p$, the set $\mathcal{C}^q_p$ is closed and $\overline{\mathcal{S}^q\setminus \mathcal{C}_p^{q}}\supset\mathcal{S}^q\setminus\mathcal{C}^q$ or, equivalently, $\textrm{Int}(\mathcal{C}^{q}_p)\subset\mathcal{C}^q$. That proof is based on a local deformation of the metric on the interior of $\mathcal{N}$, which can be carried out in our setting to the same effect with no significant changes in the argument. Since $\mathcal{S}^q$ is separable (see the statement of Theorem \ref{thm:struct}), $\pi^q$ restricted to $\mathcal{R}^q$ is locally a $C^1$-diffeomorphism, and $\pi^q$ is locally proper\footnote{For the construction presented here, the reader should recall that a proper map whose target is a first-countable Hausdorff space is in fact closed.}, we can choose two sequences of sets:
  	\begin{itemize}
  		\item{$\left\{V^{q,p}_k\right\}, \ k\geq 1$ such that $\cup_{k\geq 1}V^{q,p}_k$ covers $\mathcal{C}^q_p\setminus\mathcal{C}^q\subset\mathcal{R}^q$, and the restriction of $\pi^q$ to each $\overline{V^{q,p}_k}$ is an homeomorphism onto its image;}
  		\item{$\left\{F^q_l\right\} \ l\geq 1$ such that $\mathcal{C}^q=\cup_{l\geq 1}F^q_l$, and each $F^q_l$ is a closed subset of $\mathcal{S}^q$ with $\pi^q(F^q_l)$ also closed in $\Gamma^q$.}	
  	\end{itemize}	Most importantly, in both cases these sets can be chosen so that the sets defined by $U^{q,p}_k=\Gamma^{q}\setminus \pi^{q}(\mathcal{C}^{q}_p\cap \overline{V^{q,p}_{k}})$ and $U^q_l:=\Gamma^{q}\setminus \pi^{q}(F^{q}_{l})$ satisfy the nested property b) described in the Claim (this relies on Remark \ref{rmkdiag}, for one can define these sets inductively starting at level $q=q_0=3$). 
  	Furthermore, each set $\pi^q(F^q_l)$ and $\pi^q(\mathcal{C}^{q}_p\cap\overline{V^{q,p}_k})$ is patently closed and we also claim with empty interior. Indeed, in the former case it is enough to observe that $\pi^q(F^q_l)\subset\pi^q(\mathcal{C}^q)$, and invoke the Sard-Smale lemma (Theorem \ref{thm:Sma}) which ensures that $\pi^q(\mathcal{C}^q)$ is meagre. In the latter case, one argues as follows: $\mathcal{C}^{q}_p\cap \overline{V^{q,p}_k} \subset \mathcal{C}^{q}_p\setminus \mathcal{C}^q$ by the very choice of $V^{q,p}_k$, also $ \ \textrm{Int}(\mathcal{C}^{q}_{p}\setminus\mathcal{C}^q)\subset\textrm{Int}(\mathcal{C}^q_p)\setminus\mathcal{C}^q=\emptyset$ as seen above, hence each $\mathcal{C}^{q}_p\cap \overline{V^{q,p}_k}$ has empty interior and thus its image under the homeomorphisms $\pi^{q}: \overline{V^{q,p}_{k}} \rightarrow \pi^{q}(\overline{V^{q,p}_k})$ is also a closed subset of $\Gamma^{q}$ with empty interior.
  	
  	As far as property c) is concerned, just observe that by definition of the sets in question one has
  	\[
  	\pi^q(\mathcal{C}^q)\cup\pi^q(\mathcal{C}^{q}_p\setminus\mathcal{C}^q)\subset \bigcup_{l\geq 1} \pi^q(F^q_l)\cup \bigcup_{k\geq 1} \pi^q(\overline{V^{q,p}_k})
  	\]
  	
  	Hereby, after relabelling the sets $U^{q,p}_{k},U^{q}_{l}$ defined in the above argument (so to replace the indices $k,l,p$ by the sole index $i$), the proof of the claim is complete.\\
  	\indent Now we use the claim to derive the conclusion for $q=\infty$. First, we observe the following general fact:

  	\begin{lem} \label{lemnested}
  		Let $\{U^q\}_{q\geq q_0}$ be a sequence of open dense subsets of $\Gamma^{q}$ such that $U^{q+1}=U^{q}\cap \Gamma^{q+1}$ for all $q \geq q_0$. Then, 
  		\begin{equation*}
  		U^{\infty}:= \cap_{q\geq q_0} (U^{q}\cap \Gamma^{\infty})
  		\end{equation*}
  		\noindent is an open and dense subset of $\Gamma^{\infty}$ (with respect to the $C^\infty$ topology). 
  	\end{lem}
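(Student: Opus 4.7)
The plan is to first observe that the nesting hypothesis collapses the infinite intersection to a single level. For any $q\geq q_{0}$, since $\Gamma^{\infty}\subset\Gamma^{q+1}$, one has
$$
U^{q+1}\cap\Gamma^{\infty} \;=\; U^{q}\cap\Gamma^{q+1}\cap\Gamma^{\infty} \;=\; U^{q}\cap\Gamma^{\infty},
$$
so by induction the sets $U^{q}\cap\Gamma^{\infty}$ are all equal and
$$
U^{\infty}\;=\;U^{Q}\cap\Gamma^{\infty}\qquad\text{for every integer }Q\geq q_{0}.
$$
Openness of $U^{\infty}$ in $\Gamma^{\infty}$ is then immediate: by Remark \ref{rmk:cqtop}, the $C^{\infty}$ topology is the smallest one making each inclusion $\Gamma^{\infty}\hookrightarrow\Gamma^{q}$ continuous, and $U^{\infty}$ is the preimage of the $\Gamma^{q_{0}}$-open set $U^{q_{0}}$.

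For density I would proceed as follows. Fix $g\in\Gamma^{\infty}$ and a neighbourhood $V$ of $g$ in $\Gamma^{\infty}$. A base of neighbourhoods of $g$ in the Fr\'echet topology of $\Gamma^{\infty}$ is given by sets of the form $\{h\in\Gamma^{\infty}:\|h-g\|_{C^{Q}}<\varepsilon\}$ for $Q\geq q_{0}$ and $\varepsilon>0$, so it suffices to produce $\tilde{h}\in U^{\infty}$ with $\|\tilde{h}-g\|_{C^{Q}}<\varepsilon$. Density of $U^{Q}$ in $\Gamma^{Q}$ (applied to $g\in\Gamma^{\infty}\subset\Gamma^{Q}$) first provides some $h\in U^{Q}$ with $\|h-g\|_{C^{Q}}<\varepsilon/2$; the next and crucial step is to regularise $h$ so as to upgrade it to a smooth metric. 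Openness of $U^{Q}$ in $\Gamma^{Q}$ allows us to require that $\tilde{h}\in U^{Q}$ as well, after which the reduction in the first paragraph identifies $\tilde{h}$ as an element of $U^{Q}\cap\Gamma^{\infty}=U^{\infty}$, and the triangle inequality yields $\|\tilde{h}-g\|_{C^{Q}}<\varepsilon$.

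The only genuine obstacle, though a standard one, is the smoothing step in the presence of a boundary: $h$ is merely $C^{Q}$, and one must produce $\tilde{h}\in\Gamma^{\infty}$ with $\|\tilde{h}-h\|_{C^{Q}}$ arbitrarily small. I would implement this by fixing a finite atlas of Fermi coordinate charts covering a neighbourhood of $\mathcal{N}$ inside a slightly larger open manifold, extending $h$ across $\partial\mathcal{N}$ by reflection in those coordinates, mollifying against a standard smooth kernel, and restricting back to $\mathcal{N}$; positive-definiteness of the resulting symmetric $(0,2)$-tensor is automatic for sufficiently fine mollification because $\Gamma^{Q}$ is open in the ambient Banach space of $C^{Q}$ symmetric tensors. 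With this ingredient in hand no further argument is needed, and the lemma follows.
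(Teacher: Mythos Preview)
Your proof is correct and follows essentially the same route as the paper: both reduce the intersection to a single level via the nesting hypothesis, deduce openness from continuity of the inclusion $\Gamma^\infty\hookrightarrow\Gamma^{q_0}$, and obtain density by combining openness and density of $U^Q$ in $\Gamma^Q$ with the standard fact that $\Gamma^\infty$ is $C^Q$-dense in $\Gamma^Q$. The paper's write-up of the density step is marginally slicker---it directly observes that $U^{q}\cap B^{q}_{\varepsilon}(\gamma_0)$ is a non-empty open subset of $\Gamma^{q}$ and hence meets the dense subset $\Gamma^{\infty}$, rather than first choosing $h$ and then smoothing---but the content is identical.
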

  	
  	\begin{proof}
  		The nested property, $U^{q+1}=U^{q}\cap \Gamma^{q+1}$, guarantees that, in fact, $U^{\infty} = U^{q}\cap \Gamma^{\infty}$ for each given $q\geq q_0 $. In particular, $U^{\infty}$ is clearly an open subset of $\Gamma^{\infty}$. Recall that the sets $B^{q}_\varepsilon(\gamma_0)\cap \Gamma^{\infty}$, where $q\geq q_0$, $\varepsilon > 0$ and $\gamma_0\in \Gamma^{\infty}$ are arbitrary and $B^{q}_{\varepsilon}(\gamma_0)$ denotes the open ball in $\Gamma^{q}$ centered at $\gamma_0$ with radius $\varepsilon$, form a basis for the topology of $\Gamma^{\infty}$ (cf. Remark \ref{rmk:cqtop}). Since $U^{q}$ is open and dense in $\Gamma^{q}$, $U^{q}\cap B^{q}_{\varepsilon}(\gamma_0)$ is a non-empty open subset of $\Gamma^{q}$. Using the standard approximation result that $\Gamma^{\infty}$ is dense in $\Gamma^{q}$ (with respect to the $C^q$-topology), we conclude that $U^{\infty} \cap B^{q}_{\varepsilon}(\gamma_0) = \Gamma^{\infty}\cap U^{q}\cap B^{q}_{\varepsilon}(\gamma_0)$ is non-empty. This shows that $U^{\infty}$ is dense in $\Gamma^{\infty}$, and concludes the proof.
  	\end{proof}
  	
  	\indent The sets $U^{q}_{i}$ defined in the Claim satisfy the hypotheses of the Lemma \ref{lemnested}. Thus, the sets $U^{\infty}_{i} = \cap_{q\geq 3} (U^{q}_{i}\cap \Gamma^{\infty})$ are open and dense subsets of $\Gamma^{\infty}$ such that
  	
  	\begin{align*}
  	\mathcal{B}^{\infty}(M)&=\bigcap_{q\geq q_0}(\mathcal{B}^q(M)\cap\Gamma^{\infty})\supseteq \bigcap_{q\geq q_0}\left(\cap_{i\geq 0}U^q_i \cap \Gamma^{\infty}\right)\\
  	&=\bigcap_{i\geq 0}\left(\cap_{q\geq q_0}(U^q_i\cap \Gamma^\infty)\right)=\bigcap_{i\geq 0}U^{\infty}_i.
  	\end{align*}
  	It follows that $\mathcal{B}^{\infty}(M)$ is a comeagre subset of $\Gamma^{\infty}$. \\
  	\indent Since there are only countably many diffeomorphisms types of compact smooth manifolds with boundary (by virtue of the classical, aforementioned results \cite{Cai35, Whi40}), one has that $\mathcal{B}^{q}= \cap_{M} \mathcal{B}^{q}(M)$ is a comeagre subset of $\Gamma^{q}$ for all integers $q\geq 3$ and $q=\infty$, which establishes Theorem \ref{thm:bumpy}.
  \end{proof}

  \subsection{Proof of Theorem \ref{thm:struct}, the structure theorem}\label{subs:struct}
  
  The logical scheme we are about to follow resembles the one presented by B. White in \cite{Whi91}, so we will mostly focus on precisely describing the necessary changes that are needed here, to handle the case of elliptic functionals defined on manifolds with boundary. Since the main statements are fairly general and hence rather abstract, we have decided to first outline the application we aim at, so that the reader can keep in mind that special case throughout our discussion.
  
  \
  
  Given $\mathcal{N}^{n+1}$ a smooth, compact manifold with non-empty boundary we endow it with a fixed, smooth, background metric $\gamma_{\ast}$ with the property that $\partial\mathcal{N}$ is totally geodesic in $\mathcal{N}$. The existence of such a metric is trivial, since it is enough to interpolate (by means of a smooth cutoff function) the product metric on a collar neighborhood of $\partial\mathcal{N}$ with any smooth metric on $\mathcal{N}$.   
  Throughout this section of the article, all differential operators are always tacitly understood with respect to the metric $\gamma_{\ast}$, unless it is explicitly stated otherwise. The same remark also applies to all Hausdorff measures coming into play, to be considered induced by $\gamma_{\ast}$. \\

  \indent For a smooth (namely: $C^{\infty}$) proper embedding $w:M\to\mathcal{N}$ such that $w(M)$ meets $\partial\mathcal{N}$ orthogonally along $\partial w(M)$ (with respect to the metric $\gamma_{\ast}$) we wish to identify embeddings $w_1:M\to\mathcal{N}$ whose image is geometrically close to $w(M)$ with suitable sections of the normal vector bundle of $w(M)\subset\mathcal{N}$. To do that and to formalize this idea, we simply proceed as follows.
  
  Let $V$ denote the rank one smooth vector bundle over $M$ obtained as pull-back of the normal bundle of $w(M)\subset\mathcal{N}$ and let $V^r:\left\{(x,v)\in V \ | \ |v|<r|\right\}$. It is well-known that for $r>0$ small enough, the exponential map of $(\mathcal{N},\gamma_{\ast})$ provides a diffeomorphism $E:V^r\to\mathcal{U}$ for some open neighborhood $\mathcal{U}$ of $w(M)$ in $\mathcal{N}$ (recall that $\partial\mathcal{N}$ is totally geodesic with respect to the metric $\gamma_{\ast}$). In turn, this induces a natural identification between the open sets
  \[
  \mathcal{B}([w],\delta_G):=\left\{[w_1]\in C^{j,\alpha}(M,\mathcal{N})/\simeq \ | \ d([w],[w_1])<\delta_G)\right\} 
  \]
  and
  \[
  \mathcal{B}^V(0,\delta_L):=\left\{u_1\in C^{j,\alpha}(M,V) \ | \ \ \|u_1\|_{C^{j,\alpha}(M,V)}<\delta_{L} \right\}
  \]
  for suitably chosen, small $\delta_G, \delta_L >0$. More explicitly: given any $u_1\in \mathcal{B}^V(0,\delta_L)$ the equivalence class of the composite map $[E\circ u_1]$ belongs to $\mathcal{B}([w],\delta_G)$ and conversely given any equivalence class $[w_1]\in 	\mathcal{B}([w],\delta_G)$ there exists a unique $u_1\in \mathcal{B}^V(0,\delta_L)$ such that the representation $[w_1]=[E\circ u_1]$ holds true.\\

  \indent  In describing the local structure of the moduli spaces $\mathcal{S}^{q,j,\alpha}$ we will employ such identification in the following fashion. Given any $w_0\in C^{j,\alpha}(M,\mathcal{N})$ that is $\gamma_0$-stationary (namely: that is a free boundary minimal hypersurface in metric $\gamma_0$), standard results in differential topology ensure the existence of a map $w'\in C^{\infty}(M,\mathcal{N})$ that is as close as we wish to $w_0$ in the topology of $C^{j}(M,\mathcal{N})$.
  We can now modify such map in a tubular neighborhood of $\partial M$ and construct $w\in C^{\infty}(M,\mathcal{N})$ that is a proper embedding meeting $\partial\mathcal{N}$ orthogonally (with respect to $\gamma_{\ast}$), coinciding with $w'$ at boundary points and for which we can write $w_0=E\circ u_0$ for some $u_0\in C^{j,\alpha}(M,V)$ whose $C^0$ norm can be made as small as needed. At that stage, the identification above can be applied to describe all maps that are close to $w_0$ in $C^{j,\alpha}(M,\mathcal{N})/\simeq$. In particular, notice that for $u\in C^{j,\alpha}(M,V)$ the area formula (Cf. e. g. Section 8 of \cite{Sim83}) allows to write the area of $E\circ u$ in metric $\gamma$ in the form $\int_M a_{\gamma}(x,u(x),\nabla u(x))\,d\mathscr{H}^n$ where $a_{\gamma}$ is a parametric integrand satisfying suitable ellipticity axioms, as we are about to see. \\

  \indent In the statements below, let $j\geq 2$ be an integer, $\alpha\in (0,1)$ and $q\geq j+1$ also an integer. 
  
  \begin{rmk}\label{rem:rankone}
  	Given $(M^n,g)$ a smooth Riemannian manifold, throughout this section we let $V$ denote a rank one vector bundle over $M$ and $\nabla$ denote a metric connection on $V$. The product on $V$ shall simply be denoted by $\cdot$ and the musical isomorphisms $\sharp$ and $\flat$ are those induced by such metric. Also, we shall use the notation $V_M$ for the bundle $T^{\ast}M\otimes V\cong End(TM,V)$.
  \end{rmk}

  \begin{prop}(Cf. Theorem 1.1 in \cite{Whi91})\label{prop:firstvar}
  	Let $M^{n}$ be a smooth, compact manifold with boundary, let $\nabla$ be a smooth metric connection on a rank one smooth bundle $V$ and let $F:\Gamma\to \Phi$ be a smooth map where
  	\begin{itemize}
  		\item{$\Gamma$ is an open subset of a (linear) Banach space;}
  		\item{
  			$\Phi =\left\{f\in C^q(M\times V\times V_M,\R), \newline
  			\ D_3 f\in C^q(M\times V\times V_M, {V_M}^{\ast})\right\}.$
  		}
  		
  	\end{itemize}
  	If we write $f_{\gamma}$ in lieu of $F(\gamma)$ and set for $u\in C^{j,\alpha}(M,V)$
  	\[
  	F(\gamma,u)=\int_{M}f_{\gamma}(x,u,\nabla u(x))\,d\mathscr{H}^n 
  	\]
  	then
  	
  	\begin{align*}
  	\left.\frac{d}{dt}\right|_{t=0}F(\gamma,u+tv)&=\int_M \left(D_2 f_{\gamma}(x,u,\nabla u)[v]-(\textrm{div} \ D_3 f_{\gamma}(x,u,\nabla u))[v]\right)\,d\mathscr{H}^n \\
  	&+\int_{\partial M}(D_{3}f_{\gamma}(x,u,\nabla u)[\nu^{\flat}\otimes v]\,d\mathscr{H}^{n-1}
  	\end{align*}
  	
  	where $\nu$ is the outward-pointing conormal of $\partial M$ in $M$. Furthermore:
  	\begin{enumerate}
  		\item{the first variation map $(H,\Theta): \Gamma\times C^{j,\alpha}(M,V)\to C^{j-2,\alpha}(M,V)\times C^{j-1,\alpha}(\partial M,V)$ given by
  			\[
  			\ \ \ \ \ \ \ \	H^{\flat}(\gamma, u)=-D_2 f_{\gamma}(x,u,\nabla u)+\textrm{div} \ D_3 f_{\gamma}(x,u,\nabla u), \ \ \Theta^{\flat}(\gamma, u)=(D_{3}f_{\gamma}(x,u,\nabla u)[\nu^{\flat}\otimes\cdot])
  			\]
  			is of class $C^{q-j}$.}
  		\item{if $u\in C^{j,\alpha}(M,V)$ satisfies 
  			\[
  			\begin{cases}
  			H(\gamma,u)=0 \\
  			\Theta(\gamma,u)=0
  			\end{cases}
  			\] then $u\in C^{q,\beta}(M,V)$ for any $\beta<1$ provided 
  			\begin{equation}\label{eq:ellipt}
  			\inf_{x\in M} \inf_{\zeta\in T^{\ast}_x M\otimes V_x} D_{33}f_{\gamma}(x,u,\nabla u)[\zeta,\zeta]\geq \varepsilon |\zeta|^2
  			\end{equation}
  			for some $\varepsilon>0$.
  		}
  	\end{enumerate}
  \end{prop}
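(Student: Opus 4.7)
The plan is to prove the three assertions in sequence, since the first variation formula underpins both the regularity statement (1) and the bootstrap statement (2).

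For the formula itself, I differentiate under the integral sign, which is legitimate since $f_\gamma \in C^q$ with $q \geq j+1 \geq 3$, obtaining
\[ \left.\frac{d}{dt}\right|_{t=0} F(\gamma, u+tv) = \int_M \bigl( D_2 f_\gamma(x,u,\nabla u)[v] + D_3 f_\gamma(x,u,\nabla u)[\nabla v] \bigr) \, d\h^n. \]
Since $\nabla$ is a metric connection on the rank-one bundle $V$, the second integrand rewrites pointwise as $\operatorname{div}\bigl(D_3 f_\gamma(x,u,\nabla u)\cdot v\bigr) - \bigl(\operatorname{div} D_3 f_\gamma(x,u,\nabla u)\bigr)[v]$. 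Applying the divergence theorem on $(M,g)$ produces the boundary contribution $\int_{\partial M} D_3 f_\gamma(x,u,\nabla u)[\nu^\flat \otimes v]\, d\h^{n-1}$ and yields the stated formula.

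For part (1), the first variation map factors through the continuous linear operator $u \mapsto (u, \nabla u) : C^{j,\alpha}(M,V) \to C^{j,\alpha}(M,V) \times C^{j-1,\alpha}(M, V_M)$ followed by Nemytskii-type substitution operators built from $D_2 f_\gamma$, $D_3 f_\gamma$ and (for $H$) the first derivatives of $D_3 f_\gamma$ that arise when one expands the divergence. The standard fact that composition with a $C^q$ nonlinearity defines a $C^{q-j}$ map between the H\"older spaces in question (precisely as in \cite[Theorem 1.1]{Whi91}) then gives the claimed regularity; the loss $q-j$ reflects that each additional Fr\'echet derivative of a H\"older Nemytskii operator costs one derivative of the underlying nonlinearity. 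The divergence appearing in $H$ produces a term linear in $\nabla^2 u$, so $H$ lands in $C^{j-2,\alpha}(M,V)$, while $\Theta$ depends only on $(u, \nabla u)|_{\partial M}$ and therefore lands in $C^{j-1,\alpha}(\partial M, V)$.

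For part (2), I would run a standard elliptic bootstrap. The linearisation of the Euler--Lagrange operator at a solution $u$ has principal symbol given by $D_{33} f_\gamma(x,u,\nabla u)$, so hypothesis \eqref{eq:ellipt} is precisely uniform ellipticity. The boundary operator $\Theta$ linearises to an oblique first-order operator with principal symbol $D_{33} f_\gamma(x,u,\nabla u)[\nu^\flat \otimes \cdot]$, and the same \eqref{eq:ellipt} together with the transversality of $\nu$ to $\partial M$ supplies the Lopatinskii complementing condition in the rank-one-bundle setting. Viewing $u$ as a solution of the linear boundary value problem with coefficients frozen to those of $u$ itself, one applies interior and boundary Schauder estimates for linear oblique-derivative problems (classical in the scalar Euclidean case and transferred to our setting by local trivialisation and local charts) to promote $u \in C^{j,\alpha}$ to $u \in C^{j+1,\alpha}$, and then iterates the bootstrap to reach $u \in C^{q,\beta}$ for every $\beta \in (0,1)$. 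The main obstacle, and the step that demands the most care, is the verification of the complementing condition together with the faithful tracking of coefficient regularity through successive linearisations.
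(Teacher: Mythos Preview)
Your treatment of the first variation formula and of part (1) is fine and matches what the paper dismisses as routine. The gap is in part (2), specifically in how you handle the boundary condition.

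You propose to freeze coefficients and view $u$ as solving a linear oblique-derivative problem, then bootstrap via Schauder. This is legitimate for the interior equation $H(\gamma,u)=0$, which is quasilinear: expanding the divergence, the second-order part is $D_{33}f_\gamma(x,u,\nabla u)[\nabla^2 u]$, so freezing produces a genuine linear uniformly elliptic equation. But the boundary condition $\Theta(\gamma,u)=D_3 f_\gamma(x,u,\nabla u)[\nu^\flat\otimes\cdot]=0$ is \emph{fully nonlinear} in $\nabla u$; it is not of the form $a(x)\cdot\nabla u + b(x)u = h(x)$. The linearisation $L_\Theta$ you correctly identify governs perturbations of $u$, but $u$ itself does not satisfy $L_\Theta u=0$, it satisfies the nonlinear condition. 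An integral-remainder rewriting $D_3 f_\gamma(x,u,\nabla u)\cdot\nu - D_3 f_\gamma(x,u,0)\cdot\nu = \bigl(\int_0^1 D_{33}f_\gamma(x,u,t\nabla u)\,dt\bigr)[\nabla u,\nu]$ does not help either, because hypothesis \eqref{eq:ellipt} is imposed only at $(x,u(x),\nabla u(x))$, not along the segment $t\nabla u(x)$, so obliqueness of the resulting linear condition is not guaranteed. Hence the classical linear oblique Schauder theory (e.g.\ Theorem 6.30 in Gilbarg--Trudinger) does not apply to $u$ directly.

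The paper confronts exactly this obstacle and bypasses it by differentiating tangentially. Since $j\geq 2$ one already has $u\in C^{2,\alpha}$, so $v:=\nabla_{\tau_j}u$ makes sense; differentiating the identity $D_3 f_\gamma(x,u,\nabla u)[\nu^\flat\otimes\cdot]=0$ along $\tau_j$ yields the genuinely linear oblique condition
\[
D_{33}f_\gamma(x,u,\nabla u)[\nabla v,\nu^\flat\otimes\cdot]=-D_{13}f_\gamma(x,u,\nabla u)[\tau_j,\nu^\flat\otimes\cdot]-D_{23}f_\gamma(x,u,\nabla u)[v,\nu^\flat\otimes\cdot],
\]
whose obliqueness now follows from \eqref{eq:ellipt} evaluated at the solution. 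One then bootstraps $v$ (hence $\nabla u$, hence $u$) using Lemma 6.29 in Gilbarg--Trudinger. This differentiation step is the substantive content of the paper's proof of (2), and it is missing from your sketch.
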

  
  \begin{rmk}(Identifications).
  	In the statement above we have tacitly exploited the isomorphism $(T^{\ast}M\otimes V)^{\ast}\simeq TM\otimes V^{\ast}$. Also, if $\tau_1,\ldots,\tau_n$ is a local orthonormal frame for $TM$ observe that $\textrm{div} \ D_3 f_{\gamma}(x,u,\nabla u))[v]=\sum_{j=1}^n\nabla_{\tau_j} (D_3 f_{\gamma}(x,u,\nabla u))[\tau_j^{\flat}\otimes v]$.
  \end{rmk}	
  
  \begin{rmk}
  	Let us explicitly observe that the bilinear form $D_{33}f_{\gamma}(x,u,\nabla u)[\cdot,\cdot]$ is symmetric, for indeed this just follows by computing the left-hand side and the right-hand side of the identity
  	\[
  	\left.\frac{\partial^2 }{\partial s \partial t}\right|_{s=t=0} f_{\gamma}(x,u(x),\nabla u(x)+s\zeta_1+t\zeta_2)=\left.\frac{\partial^2 }{\partial s \partial t}\right|_{s=t=0} f_{\gamma}(x,u(x),\nabla u(x)+t\zeta_1+s\zeta_2).
  	\]
  \end{rmk}

  Let us now get back to the proposition in question.

  \begin{proof}
  	The proof of the first assertion and of part (1) amounts to routine checks, so we omit the details. As far as the part (2) is concerned, let us start by noticing that (under our hypotheses) $u$ solves a \textsl{linear} inhomogeneous uniformly elliptic equation, as is seen by simply expanding the divergence term in the expression for the functional $H$. In particular, the ellipticity constant equals the coercivity constant of the symmetric bilinear form $D_{33}f_{\gamma}(x,u,\nabla u)[\cdot,\cdot]$ and thus our assumption \eqref{eq:ellipt} ensures uniform ellipticity. In order to apply global Schauder estimates (i.e. estimates up to the boundary, as per Theorem 6.30 in \cite{gt}) we also need to make sure that $u$ satisfies a linear oblique boundary condition. That is not necessarily the case for general $f_{\gamma}$, but there is a direct way of bypassing the obstacle. Indeed, since we have assumed throughout our discussion that $j\geq 2$ we already know that $u\in C^{2,\alpha}(M,V)$ and thus we can simply check that, said $\left\{\tau_{j}\right\}_{j=1,\ldots, n}$ a local orthonormal frame on an open set $\Omega\subset M$, $\nabla_{\tau_j} u \in C^{q-1,\beta}(\Omega,V)$ for any $\beta<1$: now, it is clear that $v:=\nabla_{\tau_j} u$ also solves a linear elliptic equation as above, plus (by differentiating $D_3 f_{\gamma}(x,u,\nabla u)[\nu^{\flat}\otimes\cdot]=0$) it satisfies a linear boundary condition of the form
  	\[
  	D_{33}f_{\gamma}(x,u,\nabla u)[\nabla v,\nu^{\flat}\otimes \cdot]=-D_{13}f_{\gamma}(x,u,\nabla u)[\tau_{j},\nu^{\flat}\otimes\cdot]-D_{23}f_{\gamma}(x,u,\nabla u)[v,\nu^{\flat}\otimes\cdot]
  	\]
  	and thus the transversality condition that one needs is also easily checked to be implied by \eqref{eq:ellipt}. Indeed if $v_{\ast}$ provides a local trivialization of $V$ over $\Omega$ (possibly by considering a smaller domain, without renaming) there is a natural isomorphism $C^{j,\alpha}(M,V)\simeq C^{j,\alpha}(M,\R)$ that associates a real-valued map $\hat{u}$ to a map $u$, and a functional $\hat{f}_{\gamma}$ to $f_{\gamma}$: in this framework one can diagonalize the symmetric quadratic form $D_{33}\hat{f}_{\gamma}(x,\hat{u},\nabla \hat{u})=\textrm{diag}(\lambda_1,\ldots,\lambda_n)$, the ellipticity condition being just the requirement $\min_{i=1,\ldots, n} \lambda_i >0$ and the oblique derivative condition being that $D_{33}\hat{f}_{\gamma}(x,\hat{u},\nabla\hat{u})[\nabla \hat{v},\nu^{\flat}\otimes\cdot]=\sum_{i=1}^n\lambda_i \nu_i \nabla_{\tau_i}\hat{v}\geq c\frac{\partial \hat{v}}{\partial\nu}$, for some $c>0$ (here ${\tau_i}$ is specified to be a local, orthonormal, diagonalizing frame). 
  	Therefore, considering the restriction of $u$ to finitely many such domains (by compactness of $M$), the corresponding local trivializations of the bundle $V$ and invoking linear Schauder estimates as provided by Lemma 6.29 in \cite{gt} on each of these domains we obtain the desired conclusion since
  	it is enough to notice that if $u\in C^{j,\alpha}(M,V)$ then all coefficients as well as the inhomogeneous term of the equation for $v$ are in $C^{j-2,\alpha}$, which allows to prove that $v\in C^{q-2,\beta}(M,V)$ for any $\beta<1$ and then, as a final stage, we feed this information again in the equation for $v$ to gain that in fact $v\in C^{q-1,\beta}(M,V)$ as was claimed.   
  \end{proof}

  \begin{rmk}\label{rem:ellipt}
  	The fact that the area integrand $a_{\gamma}(x,z,p)$ satisfies the ellipticity condition \eqref{eq:ellipt} follows from the corresponding assertion for the integrand for Cartesian graphs in Euclidean spaces together with a standard approximation argument. Indeed, recall that if $\Omega\subset\R^n$ is a regular domain and $u\in C^{1}(\Omega,\R)$ then the function in question takes the form $a_{\delta}(x,z,p)=\sqrt{1+|p|^2}$ for which one has
  	\[
  	D_{33}a_{\delta}(x,z,p)=\frac{\delta_{ij}(1+|p|^2-p_i p_j)}{(1+|p|^2)^{3/2}}
  	\]	
  	so that \eqref{eq:ellipt} is satisfied with constant $\varepsilon=1$. Now, assume instead to have $w\in C^{\infty}(M,\mathcal{N})$ and $w_0=E\circ u_0$ for some $u_0\in C^{j,\alpha}(M,V)$ whose $C^0$ norm is small compared to the injectivity radius of $(\mathcal{N},\gamma_{\ast})$: given any $\rho$ one can find finitely many balls $B_1,\ldots, B_{|I|}$ with $B_i=B_{r_i}(p_i)$  covering the image $w(M)$ and such that (in the associated geodesic normal coordinates centered at $p_1,\ldots, p_{|I|}$ respectively) one has $\|g_{ij}-\delta_{ij}\|_{C^2(\overline{B}_i)}<C\rho^2$ and the restriction $w|_{B_i}$ is an exponential graph over $T_{p_i}w(M)$ whose defining function has $C^2$-norm less than $C\rho^2$. These facts suffice to check the pointwise condition \eqref{eq:ellipt} provided we only choose $\rho>0$ small enough. 
  \end{rmk}	
  
  We now turn our attention to the the linearizations of the maps $H$ and $\Theta$ defined above. 	
  
  \begin{prop}\label{prop:jacobi}
  	The linearization of the first variation operators are:
  	\[
  	L_{H}: C^{j,\alpha}(M,V)\to C^{j-2,\alpha}(M,V), \  L_H(\gamma,u)[v]=\left.\frac{d}{dt}\right|_{t=0}H(\gamma,u+tv) 
  	\]
  	given by
  	\begin{align}\label{eq:jh}
  	L_H(\gamma,u)[v] &=-(D_{22}f_{\gamma}(x,u,\nabla u)[\cdot, v]+D_{23}f_{\gamma}(x,u,\nabla u)[\cdot, \nabla v])^{\sharp} \\
  	&+div\left(D_{23}f_{\gamma}(x,u,\nabla u)[v,\cdot]+D_{33}f_{\gamma}(x,u,\nabla u)[\nabla v,\cdot] \right)^{\sharp}
  	\end{align}
  	and 
  	\[
  	L_{\Theta}: C^{j,\alpha}(M,V)\to C^{j-1,\alpha}(\partial M,V), \ L_\Theta(\gamma,u)[v]=\left.\frac{d}{dt}\right|_{t=0}\Theta(\gamma,u+tv)
  	\]
  	given by
  	\begin{equation}\label{eq:jt}
  	L_{\Theta}(\gamma,u)[v]=(D_{23}f_{\gamma}(x,u,\nabla u)[v,\nu^{\flat}\otimes\cdot]+D_{33}f_{\gamma}(x,u,\nabla u)[\nabla v,\nu^{\flat}\otimes\cdot])^{\sharp}. 
  	\end{equation}
  	Moreover, if the ellipticity condition \eqref{eq:ellipt} holds then:
  	\begin{enumerate}
  		\item{there exists an orthonormal basis of $L^2(M,V)$ consisting of Robin eigenfunctions for $(L_{H},L_{\Theta})$, namely a sequence $\left\{\phi_k\right\}_{k\geq 1}$ satisfying
  			\[
  			\begin{cases}
  			L_{H}{\phi_k}=-\lambda_k \phi_k \\
  			L_{\Theta}{\phi_k}=0
  			\end{cases}
  			\]
  			where $\lambda_1<\lambda_2\leq\ldots\lambda_k\to+\infty$;}
  		\item{the operator $L=(L_{H},L_{\Theta})(\gamma,u):C^{j,\alpha}(M,V)\to C^{j-2,\alpha}(M,V)\times C^{j-1,\alpha}(\partial M,V)$ is Fredholm of index 0;}
  		\item{denoted by $\epsilon:\partial M\to M$ the canonical inclusion, and by $(\cdot,\cdot)$ the product coupling on $L^2(M,V)\times L^2(\partial M,V)$ one has
  			\[
  			(L(\gamma,u)[v],(w,w\circ\epsilon))=(L(\gamma,u)[w],(v,v\circ\epsilon)) \ \forall v,w \in C^{j,\alpha}(M,V).
  			\]}	
  	\end{enumerate}	
  \end{prop}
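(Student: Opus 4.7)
The formulas \eqref{eq:jh} and \eqref{eq:jt} for $L_H$ and $L_\Theta$ follow by straightforward differentiation under the integral in the first variation formula, using the chain rule on $f_\gamma(x,u+tv,\nabla u+t\nabla v)$ and then either reorganizing the $D_2$, $D_3$ terms through integration by parts (for $L_H$), or just reading off the boundary integrand (for $L_\Theta$). Symmetry of $D_{33}f_\gamma$, $D_{22}f_\gamma$ will be used tacitly. The only subtlety worth checking is that $L_\Theta$ indeed takes values in $C^{j-1,\alpha}(\partial M, V)$ rather than $C^{j-2,\alpha}(\partial M, V)$; this is because the top-order term $D_{33}f_\gamma[\nabla v, \nu^\flat \otimes \cdot]$ involves only first derivatives of $v$, and by the trace theorem $C^{j,\alpha}(M,V)\to C^{j-1,\alpha}(\partial M, V)$ it lies in the claimed space.

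The main point is to establish the symmetry property (3), from which both (1) and (2) follow by standard abstract arguments. To prove (3), I would compute
\[
\bigl(L_H(\gamma,u)[v], w\bigr)_{L^2(M,V)} + \bigl(L_\Theta(\gamma,u)[v], w\circ\epsilon\bigr)_{L^2(\partial M,V)}
\]
by plugging in \eqref{eq:jh} and \eqref{eq:jt} and applying the divergence theorem to transfer derivatives. The bulk term produced by $\mathrm{div}(\cdots)$ applied against $w$ generates, after integration by parts, (i) a boundary term over $\partial M$ involving $\nu^\flat$ paired with $w$, which exactly cancels the $L_\Theta$ contribution from the boundary integral, and (ii) a symmetric bulk expression in $v,w,\nabla v,\nabla w$ built from $D_{22}f_\gamma$, $D_{23}f_\gamma$, $D_{33}f_\gamma$. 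Symmetry of $D_{22}$ and $D_{33}$ in their respective slots, together with the fact that $D_{23}$ appears symmetrically in both $v$- and $w$-slots once all terms are collected, yields the desired identity.

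For (2), the operator $L_H$ is linear, second-order, with principal symbol $\xi\mapsto D_{33}f_\gamma(x,u,\nabla u)[\xi\otimes\cdot,\xi\otimes\cdot]$, which is uniformly positive by hypothesis \eqref{eq:ellipt}; hence it is uniformly elliptic. The boundary operator $L_\Theta$ has principal part $\nabla v \mapsto D_{33}f_\gamma(x,u,\nabla u)[\nabla v, \nu^\flat\otimes\cdot]$, and again by \eqref{eq:ellipt} the component of this along $\nabla_\nu v$ is bounded below by $\varepsilon|\nabla_\nu v|$, so the boundary condition is oblique in the sense required by \cite{gt} (Lopatinski--Shapiro / covering condition is satisfied). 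The general theory of regular elliptic boundary value problems on compact manifolds then yields that $L=(L_H,L_\Theta)$ is Fredholm between the stated H\"older spaces. By the symmetry in (3), the operator is formally self-adjoint, and a standard argument shows that a formally self-adjoint Fredholm elliptic boundary problem has Fredholm index zero (kernel and cokernel are isomorphic via the pairing).

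For (1), spectral theory now follows in the classical way: the inverse of $L_H - \tau$ (for $\tau$ sufficiently negative, using coercivity after adding a constant) subject to the boundary condition $L_\Theta v = 0$ defines a compact self-adjoint operator on $L^2(M,V)$, again by virtue of (3) and Rellich compactness. The spectral theorem yields an $L^2$-orthonormal basis of eigenfunctions $\{\phi_k\}$ with eigenvalues $\lambda_k\nearrow\infty$, each $\phi_k\in C^{q,\beta}(M,V)$ by the regularity statement in Proposition~\ref{prop:firstvar}(2) applied to the linear problem. The hard part is really just setting up the integration by parts in (3) cleanly in the bundle-valued setting, since one must be careful that the musical isomorphisms and the connection $\nabla$ on $V$ interact properly with divergence; I would reduce to a local trivialization as in Remark~\ref{rem:rankone} and the discussion after \eqref{eq:ellipt} to make this transparent.
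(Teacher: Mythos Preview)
Your proposal is correct and follows essentially the same route as the paper: both treat the explicit formulas and part (3) as routine integration by parts, verify ellipticity and obliqueness of the boundary condition via \eqref{eq:ellipt} to obtain Schauder estimates and the Fredholm property, and then use the symmetry (3) to conclude the index is zero; part (1) is dispatched as standard spectral theory in both.

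The one place where you compress more than the paper is the index-zero step in (2). You appeal to a general principle that a formally self-adjoint Fredholm problem has index zero, but note that the pairing in (3) couples the target $Y_1\times Y_2$ with the \emph{diagonal} subspace $\{(w,w\circ\epsilon):w\in X\}$, not with all of $Y_1\times Y_2$, so the ``kernel isomorphic to cokernel via the pairing'' slogan does not apply verbatim. The paper handles this by first producing (via Theorem~6.31 in \cite{gt}) some $\psi\in C^{j,\alpha}(M,V)$ with $L_\Theta\psi=\theta$ for any prescribed boundary datum $\theta$, thereby reducing solvability of $L\phi=(\xi,\theta)$ to the homogeneous-boundary problem $L_H\phi=\xi-L_H\psi$, $L_\Theta\phi=0$; on this subspace the genuine Fredholm alternative against $K=\ker L$ is available, and one reads off $\textrm{codim}(R)=\textrm{dim}(K)$. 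Your argument becomes complete once you insert this surjectivity-of-$L_\Theta$ reduction, which is precisely what underlies the identification $R=\check K^{\perp}\cap Y$ used later in the paper.
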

  
  \begin{proof} Part (1) is standard and part (3) follows at once via integration by parts, so let us discuss part (2).
  	First of all, Schauder estimates (up to the boundary) for oblique derivative problems (Cf. Theorem 6.30 in \cite{gt}, but consider also the discussion above about local trivializations of $V$) imply in our case that 
  	\[
  	\|\phi\|_{C^{j,\alpha}(M,V)}\leq C\left(\|\phi\|_{C^0(M,V)}+\|L_H \phi\|_{C^{j-2,\alpha}(M,V)}+\|L_{\Theta}\phi\|_{C^{j-1,\alpha}(\partial M,V)} \right)
  	\]
  	and hence the fact that $L$ is a Fredholm operator follows by general arguments in Functional Analysis. More specifically, that the image of $L$ is closed follows e. g. from Proposition 3.1 in Chapter 5 of \cite{Tay81}, that the kernel $K$ is finite dimensional follows from the fact that (by virtue of the estimate above, and the Arzel\'a-Ascoli theorem) $K\cap C^{j,\alpha}(M,V)$ has a compact unit ball and all we need to check now is that indeed $\textrm{dim}(K)=\textrm{codim}(R)$ (where $R\subset C^{j-2,\alpha}(M,V)\times C^{j-1,\alpha}(\partial M,V)$ denotes the image of $L$), which means that the Fredholm index of $L$ equals zero. Given $(\xi, \theta)\in C^{j-2,\alpha}(M,V) \times C^{j-1,\alpha}(\partial M,V)$, we want to determine conditions for the solvability of the problem $L\phi=(\xi,\theta)$ for $\phi\in C^{j,\alpha}(M,V)$.
  	Let then $\psi\in C^{j,\alpha}(M,V)$ be a function satisfying $L_{\Theta}\psi=\theta$ (for instance, Theorem 6.31 in \cite{gt} ensures the existence of an \textsl{harmonic} such function). So, by linearity it is clear that one can solve our problem if and only if the system
  	\begin{equation}\label{eq:ellsys}
  	\begin{cases}
  	L_H \phi= \xi-L_H \psi \\
  	L_{\Theta} \phi =0
  	\end{cases}
  	\end{equation}
  	is solvable for $\phi\in C^{j,\alpha}(M,V)$. Now, elementary arguments ensure that this happens if and only if $\xi-L_H \psi \in K^{\perp}$ and the conclusion follows at once.
  \end{proof}

  Before presenting the next proposition, which is reminescent of Theorem 1.2 in \cite{Whi91}, we need to introduce some notation.
  Throughout this section, we let 
  \[
  X=C^{j,\alpha}(M,V), \ \ \check{X}=\left\{(\varphi,\varphi\circ\epsilon), \ \varphi\in X \right\}
  \]
  \[
  Y=Y_1\times Y_2, \ Y_1=C^{j-2,\alpha}(M,V) \ Y_2=C^{j-1,\alpha}(\partial M,V)
  \]
  so that we obviously have the inclusion $\check{X}\subset Y\subset L^2(M)\times L^2(\partial M)$.
  For a point $(\gamma_0,u_0)$ satisfying $H(\gamma_0,u_0)=\Theta(\gamma_0,u_0)=0$, we further set 
  \[
  K=ker L(\gamma,u), \ \ \check{K}=\left\{(\varphi,\varphi\circ\epsilon), \ \varphi\in K \right\}
  \]
  and said $\pi_K: L^2(M)\to K$ the orthogonal projector (with respect to the $L^2(M)$-structure) we introduce the associated operator:
  \[
  \pi^{ext}_{K}: X\to \check{K}, \ \ \pi^{ext}_{K}(\varphi)=(\pi_K(\varphi),\pi_K(\varphi)\circ\epsilon).
  \] 
  Notice that $\check{K}$ is a finite dimensional subspace of $L^2(M,V)\times L^2(\partial M,V)$, hence it is trivially closed and has its own orthogonal projector $\pi_{\check{K}}:\check{X}\to \check{K}$.

  \begin{prop}\label{prop:submersion}
  	In the setting above, and under the hypotheses of Proposition \ref{prop:firstvar} and Proposition \ref{prop:jacobi}, assume that for every nonzero $\kappa\in K$ there exists a differentiable curve $\gamma:(-1,1)\to\Gamma$ with $\gamma(0)=\gamma_0$ and such that
  	\begin{equation}\label{eq:submhp}
  	\left.\frac{\partial}{\partial s}\right|_{s=0}((H(\gamma(s),u_0),\Theta(\gamma(s),u_0)),(\kappa,\kappa\circ\epsilon))\neq 0.
  	\end{equation}
  	Then the map $(H,\Theta): \Gamma\times X\to Y$ is a submersion near $(\gamma_0,u_0)$, so there exists a neighborhood $U_{\Gamma}\times U_{X}$ of such point such that
  	\[
  	\mathcal{M}=\left\{(\gamma,u)\in U_{\Gamma}\times U_{X} \ | \ \ H(\gamma,u)=0, \ \Theta(\gamma,u)=0 \right\}
  	\] 
  	is a $C^{q-j}$-Banach submanifold with tangent space $ker D(H,\Theta)(\gamma,u)$ at each $(\gamma,u)\in\mathcal{M}$. Furthermore, the restricted projection operator $\Pi_{|\mathcal{M}}:\mathcal{M}\to\Gamma$, where $\Pi:\Gamma\times X\to\Gamma$ is given by $\Pi(\gamma,u)=\gamma$, is a $C^{q-j}$-Fredholm map of index 0.
  \end{prop}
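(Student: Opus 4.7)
The plan is to prove three assertions in turn: (a) the total differential $D(H,\Theta)(\gamma_0,u_0)\colon B\times X\to Y$ (with $B$ the Banach space containing $\Gamma$) is surjective; (b) its kernel admits a closed complement, so that the $C^{q-j}$ submersion theorem applies and presents $\mathcal{M}$ as a submanifold; (c) the induced projection $\Pi|_\mathcal{M}$ is Fredholm of index zero at every nearby point.

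The engine of the whole argument is Proposition \ref{prop:jacobi}: its part (2) provides that $L:=\partial_u(H,\Theta)(\gamma_0,u_0)$ is Fredholm of index zero, and the self-adjointness identity in part (3) yields the Fredholm alternative
\[
L(X)=\bigl\{y\in Y:(y,(\kappa,\kappa\circ\epsilon))=0 \text{ for all } \kappa\in K\bigr\}.
\]
Consequently the pairing induces an isomorphism $Y/L(X)\cong K^*$, under which assumption \eqref{eq:submhp} translates precisely into surjectivity of $\pi_{Y/L(X)}\circ D_1$, where $D_1:=\partial_\gamma(H,\Theta)(\gamma_0,u_0)$; combined with $\operatorname{Range}(L)=L(X)$ this establishes (a). To handle (b) I would pick a finite-dimensional subspace $T\subset B$ with $\dim T=\dim K$ on which $\pi_{Y/L(X)}\circ D_1|_T$ is a linear isomorphism. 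The auxiliary map $\Psi\colon T\oplus X\to Y$ defined by $\Psi(\dot\gamma,v)=D_1\dot\gamma+Lv$ is then surjective with kernel $\{0\}\oplus K$, because any $(\dot\gamma,v)\in\ker\Psi$ must satisfy $\pi_{Y/L(X)}D_1\dot\gamma=0$, forcing $\dot\gamma=0$ and $v\in K$. Finite-dimensionality of $\ker\Psi$ makes it split, so $\Psi$ has a right inverse, and composing with the inclusion $T\oplus X\hookrightarrow B\times X$ produces a right inverse of $D(H,\Theta)(\gamma_0,u_0)$. The submersion theorem then gives $\mathcal{M}$ the claimed $C^{q-j}$ Banach submanifold structure with the correct tangent space.

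For (c), surjectivity is an open condition, so at every $(\gamma,u)\in\mathcal{M}$ near $(\gamma_0,u_0)$ both $L(\gamma,u)$ remains Fredholm of index zero and $\pi_{Y/L(\gamma,u)(X)}\circ D_1(\gamma,u)$ remains surjective. The derivative of $\Pi|_\mathcal{M}$ at $(\gamma,u)$ is the projection $(\dot\gamma,v)\mapsto\dot\gamma$ restricted to $\ker D(H,\Theta)(\gamma,u)$: its kernel is $\{0\}\oplus\ker L(\gamma,u)$, of dimension equal to the nullity, and its image is $\ker(\pi_{Y/L(\gamma,u)(X)}\circ D_1(\gamma,u))$, of codimension $\dim\ker L(\gamma,u)$ in $B$. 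Hence $d(\Pi|_\mathcal{M})$ is Fredholm of index zero. This same computation also delivers the identification of the nullity of $w$ with $\dim\ker D\pi^{q,j,\alpha}_{\mathcal{S}}$ asserted in the companion Theorem \ref{thm:struct}.

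The main technical hurdle is step (a): converting the pointwise perturbation hypothesis \eqref{eq:submhp}---which merely asks, for each nonzero element of $K$, for one direction of metric variation pairing nontrivially with it---into genuine surjectivity of the total differential. The conversion hinges entirely on the Fredholm-alternative identification $Y/L(X)\cong K^*$, which is ultimately the content of the symmetry in Proposition \ref{prop:jacobi}(3). Beyond that, the remaining steps are formal manipulations with Fredholm operators; in particular, finite-dimensionality of $K$ sidesteps the otherwise delicate issue of complementability of subspaces in general Banach spaces.
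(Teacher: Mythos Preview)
Your proof is correct and follows the same overall line as the paper: establish $L(X)=\check{K}^\perp\cap Y$ from Proposition~\ref{prop:jacobi}(2)--(3), use hypothesis~\eqref{eq:submhp} to get surjectivity of the total differential, split the kernel, then read off kernel and cokernel of $D(\Pi|_{\mathcal M})$ directly.

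The one genuine point of departure is the splitting step~(b). The paper builds an explicit projector onto $\ker D(H,\Theta)(\gamma_0,u_0)$: it first sets $K_\Gamma=\ker(\pi_{\check K}\circ D_1(H,\Theta)(\gamma_0,u_0))$, uses Hahn--Banach to project $\Gamma$ onto $K_\Gamma$ (finite codimension), and then inverts the isomorphism $\pi^{ext}_K+L:X\to Y$ to produce the $X$-component of the projector. You instead choose a finite-dimensional transversal $T\subset B$, observe that $\Psi:T\oplus X\to Y$ is surjective with finite-dimensional kernel $\{0\}\oplus K$, and obtain a bounded right inverse of the full differential by composing a right inverse of $\Psi$ with the inclusion $T\oplus X\hookrightarrow B\times X$. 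Both arguments ultimately rely on the same finite-dimensionality, and both yield a complemented kernel; yours is a bit more conceptual and avoids writing down the projector formula, while the paper's version has the advantage of making the characterisation~\eqref{eq:char} of the kernel explicit, which it then reuses verbatim in the Fredholm-index computation.
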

  
  \begin{proof}
  	For the first assertion, we wish to apply the well-known submersion criterion in the setting of Banach manifolds (cf. e. g. Prop. 2.3 in \cite{Lan99}), so we need to check surjectivity of $D(H,\Theta)$ at $(\gamma_0,u_0)$ and splitting of the domain (which is implied by the existence of a bounded linear projector onto $ker D(H,\Theta)(\gamma_0,u_0)\subset \Gamma\times X$). As far as surjectivity is concerned, let us first observe that $LX=\check{K}^{\perp}\cap Y$ since the inclusion  $LX\subseteq \check{K}^{\perp}\cap Y$ follows from property (3) in Proposition \ref{prop:jacobi} and the two linear spaces in question must have the same codimension because $L$ is of index 0 by virtue of property (2) again in Proposition \ref{prop:jacobi}. Now, for the sake of a contradiction assume that there exists $\kappa\in K$ such that, without loss of generality, $(k,k\circ\epsilon)\perp D_1(H,\Theta)(\gamma_0,u_0) X$ (with respect to $L^2(M,V)\times L^2(\partial M,V)$). But then, said $\eta=\gamma'(0)$ one would have
  	\begin{align*}
  	0 =(D_1 (H,\Theta)(\gamma_0,u_0)[\eta],(\kappa,\kappa\circ\epsilon))=&\int_M \gamma_{\ast}(\kappa, D_1 H(\gamma_0,u_0)[\eta])\,d\mathscr{H}^n\\
  	&+\int_{\partial M}\gamma_{\ast}(k, D_1\Theta(\gamma_0,u_0)[\eta])\,d\mathscr{H}^{n-1} \\
  	= &\left.\frac{\partial}{\partial s}\right|_{s=0}((H(\gamma(s),u_0),\Theta(\gamma(s),u_0)),(\kappa,\kappa\circ\epsilon))\neq 0
  	\end{align*}
  	where the last step relies on our assumption \eqref{eq:submhp}. Such contradiction completes the proof that $D(H,\Theta)(\gamma_0,u_0):\Gamma\times X\to Y$ is a surjective linear map. Notice that we have just shown that in fact 
  	\begin{equation}\label{eq:image}
  	ran(\pi_{\check{K}}\circ D_1 (H,\Theta)(\gamma_0,u_0))=\check{K}.
  	\end{equation}

  	Let us now consider the splitting issue. Based on the argument above, it is straightforward to obtain the characterization
  	\begin{equation}\label{eq:char}
  	ker D(H,\Theta)(\gamma_0,u_0)=\left\{(\eta,v), \ \eta\in ker (\pi_{\check{K}}\circ D_1 (H,\Theta)(\gamma_0,u_0)), \ Lv=-D_1 (H,\Theta)(\gamma_0, u_0)\eta \right\}.
  	\end{equation}
  	Because of \eqref{eq:image}, $K_\Gamma := ker (\pi_{\check{K}}\circ D_1 (H,\Theta)(\gamma_0,u_0))$ has codimension $dim K$ inside $\Gamma$ so (this being finite), we know by Hahn-Banach the existence of a Banach projector $\pi_{K_{\Gamma}}: \Gamma\to K_{\Gamma}$. That being said, we claim that a projector onto $ker D(H,\Theta)(\gamma_0,u_0)\subset \Gamma\times X$ is given by the map $\Psi: \Gamma\times X \to \Gamma\times X$ defined by
  	\begin{equation}\label{eq:defproj}
  	(\eta, v) \mapsto (\pi_{K_{\Gamma}}\eta, (\pi^{ext}_{K}+L)^{-1}(-D_1 (H,\Theta)\pi_{K_{\Gamma}}\eta+\pi^{ext}_{K}v)).
  	\end{equation}
  	To prove such claim, one needs to check the following facts:
  	\begin{enumerate}
  		\item{the map $\pi^{ext}_K+L: X\to Y$ is a linear isomorphism;}
  		\item{the operator $\Psi$ above is well-defined, linear and bounded;}
  		\item{$ran (\Psi)\subset ker D(H,\Theta)(\gamma_0,u_0)$}
  		\item{$\Psi=Id$ when restricted to $ker D(H,\Theta)(\gamma_0,u_0)$.}	
  	\end{enumerate}
  	and each of them is almost immediate to verify, so we leave the details to the reader. \\

  \indent	Lastly, we need to check that the restricted projector has Fredholm index equal to zero. On the one hand, it is trivial that $ker D(\Pi_{|\mathcal{M}})(\gamma_0,u_0)=\left\{0\right\}\times K$, as this simply follows by the identification of the tangent space to $\mathcal{M}$ at $(\gamma_0,u_0)$ with $ker D(H,\Theta)(\gamma_0,u_0)$. On the other hand, for the image of this map we can write (thanks to \eqref{eq:char})
  	\[
  	ran D(\Pi|_{\mathcal{M}})(\gamma_0,u_0)=\Pi(T_{(\gamma_0,u_0)}\mathcal{M})=\Pi(ker D(H,\Theta)(\gamma_0,u_0))=ker (\pi_{\check{K}}\circ D_1 (H,\Theta)(\gamma_0,u_0))
  	\]
  	and then the latter also has codimension equal to $dim K$ because of equation \eqref{eq:image}.
  	
  \end{proof}

  We can finally make use of this local description of the critical manifold to prove the \textsl{structure theorem}, stated above as Theorem \ref{thm:struct}.

  \begin{proof}
  	First of all, it follows from the discussion presented at the very beginning of this subsection that given any $(\gamma_0,[w_0])\in\mathcal{S}^{q,j,\alpha}$, one can actually find $\varepsilon>0$ small enough that the set
  	\[
  	\left\{(\gamma,[E\circ u]): \ H(\gamma,u)=\Theta(\gamma,u)=0, \ \|\gamma-\gamma_0\|_{C^q}+\|u-u_0\|_{C^{j,\alpha}}<\varepsilon \right\}
  	\]
  	is indeed an open neighborhood of the point in question inside $\mathcal{S}^{q,j,\alpha}$. Here $u_0$ is uniquely determined by the relation $w_0=E\circ u_0$.
  	That being said, we can construct a local $C^{q-j}$-chart for $\mathcal{S}^{q,j,\alpha}$ near $(\gamma_0,[w_0])$ by simply invoking Proposition \ref{prop:submersion} once we verify that the key assumption, condition \eqref{eq:submhp}, is satisfied. So, let $\kappa\in ker L(\gamma,u)$ (non zero): we need to construct a differentiable curve of metrics $s\mapsto\gamma(s)$ such that 
  	\[
  	\left.\frac{\partial^2}{\partial s \partial t}\right|_{s=t=0}\int_M a_{\gamma(s)}(x,u_0+t\kappa,\nabla (u_0+t\kappa))\neq 0
  	\]
  	which is clearly equivalent to the condition in question (we have set $a_{\gamma}$ to denote the non-parametric area functional in metric $\gamma$). The conformal deformation presented by B. White at page 179 of \cite{Whi91} serves the scope (the computation needed to check this can be followed with no changes at all, since no integration by parts is involved).
  	As a result, we can find a neighborhood $U_{\Gamma}\times U_{X}\subset \left\{\|\gamma-\gamma_0\|_{C^q}+\|u-u_0\|_{C^{j,\alpha}}<\varepsilon \right\}$ of $(\gamma_0, u_0)\in \Gamma\times C^{j,\alpha}(M,V)$ such that
  	\[
  	\mathcal{U}=\left\{(\gamma,u)\in U_{\Gamma}\times U_{X} \ | \ \ H(\gamma,u)=\Theta(\gamma,u)=0 \right\}
  	\] 
  	is a $C^{q-j}$-Banach submanifold in $\Gamma\times X$ and let $\varphi(\gamma,u)=(\gamma,u\circ E)$ be the associated local parametrization (which is patently a homeomorphism). All we need to check is that given any two such local parametrizations, say $(\mathcal{U},\varphi)$ and $(\mathcal{U}',\varphi')$ they are indeed $C^{q-j}$ compatible, in the standard sense that
  	(possibly by taking a smaller $\mathcal{U}$, if needed, but without renaming) the composition $(\varphi')^{-1}\circ \varphi:\mathcal{U}\to\mathcal{U}'$ is a $C^{q-j}$ map. Once again, this is handled at page 180 of \cite{Whi91} and no modifications are needed in our case. Lastly, the fact that $\mathcal{S}^{q,j,\alpha}$ is separable is obtained as follows. Recall that, by virtue of statement (2) in Proposition \ref{prop:firstvar} if $w\in C^{j,\alpha}(M,\mathcal{N})$ is $\gamma$-stationary then in fact $w\in C^q(M,\mathcal{N})$, hence let $\check{\mathcal{S}}^{q,j,\alpha}$ be just the set $\mathcal{S}^{q,j,\alpha}$ endowed with the topology it inherits as a subspace of the product $\Gamma\times (C^{q}(M,\mathcal{N})/\simeq)$, for $\simeq$ the usual equivalence relation (quotienting by diffeomorphisms of $M$). Clearly, $\mathcal{S}^{q,j,\alpha}$ has a coarser topology than $\check{\mathcal{S}}^{q,j,\alpha}$ so it is enough to show that $\check{\mathcal{S}}^{q,j,\alpha}$ is separable. On the other hand, the latter conclusion follows as a consequence of three elementary facts:
  	\begin{enumerate}
  		\item{$C^{q}(M,\mathcal{N})$ is a separable metric space, and so is $C^{q}(M,\mathcal{N})/\simeq$ since it is Hausdorff;}
  		\item{$\Gamma$ is a separable metric space, so (by the previous step) $\Gamma\times (C^{q}(M,\mathcal{N})/\simeq)$ is also a separable metric space;}
  		\item{$\check{\mathcal{S}}^{q,j,\alpha}$ is a separable metric space since it is a subset of a separable metric space.}
  	\end{enumerate}	            	 	 
  	Thereby the proof is complete. 
  \end{proof}

\appendix

\section{The second variation for smooth hypersurfaces}\label{app:2ndvar}

Our goal is to derive the second variation of the area functional for arbitrary hypersurfaces with boundary in $\mathcal{N}^{n+1}$. For convenience, we assume $\mathcal{N}$ is isometrically embedded in some Euclidean space of possibly large dimension $\mathbb{R}^d$. Let $\Sigma^n \emb \mathcal{N}^{n+1}$ be a smooth properly embedded hypersurface with boundary. We consider a two-parameter family of ambient variations $\Sigma(t,s)$ of $\Sigma=\Sigma(0,0)$ defined by $$\Sigma(t,s)=F_{t,s}(\Sigma) = \Psi_s \circ \Phi_t(\Sigma)$$ where $\Phi_t$ and $\Psi_s$ are the flows generated by compactly supported vector fields $X$ and $Z$ in $\mathbb{R}^d$, respectively. We assume both $X$ and $Z$ define vector fields in $\mathfrak{X}_\de$ when restricted to $\mathcal{N}$, therefore each $\Sigma(t,s)$ is a properly embedded hypersurface in $\mathcal{N}^{n+1}$. %Furthermore, $Z$ is compactly supported away from $\de \Sigma(t) \cap (\mathcal{N}^{n+1} \sm \de \mathcal{N}^{n+1})$ - i.e. the variation $Z$ does not affect any portion of the boundary of $\Sigma(t)$ which lies in the interior of $\mathcal{N}^{n+1}$.

We have
$$\text{$\left.\pl{F_{t,s}}{s}(x)\right|_{t=s=0} = Z(x)$, $\left.\pl{F_{t,s}}{t}(x)\right|_{t=s=0} = X(x)$, and  $\left.\pl{}{t}\pl{}{s}F_{t,s}(x)\right|_{t=s=0} = D_X Z (x)$}.$$
Computing the second variation of volume as in \cite{Sim83}, Chapter 2, \S 9, we obtain
\begin{multline*}
\pl{}{t}\left.\pl{}{s}\h^n(\Sigma(t,s))\right|_{t=s=0} = \int_{\Sigma}\sum_{i=1}^{n}\la D_{\tau_i}D_{X}Z,\tau_i\ra +( \sum_{i=1}^{n}\la D_{\tau_i}X,\tau_i \ra )( \sum_{j=1}^{n}\la D_{\tau_j}Z,\tau_j \ra) \\
+ \sum_{i=1}^n  \la (D_{\tau_{i}} X)^{\bot_{\Sigma}}, (D_{\tau_{i}} Z)^{\bot_{\Sigma}}\ra - \sum_{i,j=1}^n \la \tau_{i}, D_{\tau_{j}} X\ra \la\tau_{j}, D_{\tau_{i}} Z\ra \id \h^n. 
\end{multline*}
In the above formula, the symbol $\perp_{\Sigma}$ denotes the component that is normal to $\Sigma$ in $\mathbb{R}^d$, $D$ denotes the Euclidean Levi-Civita connection, and $\{\tau_i\}_{i=1}^{n}$ is a choice of an orthonormal basis of $T_p\Sigma$ at each point $p$ in $\Sigma$ (the above sums are easily shown to be independent of such choice).

Since we assumed that $X$ and $Z$ belong to $\mathfrak{X}_\de$ when restricted to $\mathcal{N}$, decomposing the vectors fields $X,Z$ and $D_{X}Z$ into their components that are tangent and perpendicular to $\mathcal{N}$ it is possible to use the Gauss equation for the embedding of $\mathcal{N}$ in $\mathbb{R}^d$ to rewrite the above formula solely in terms of the geometry of the embedding $\Sigma \emb \mathcal{N}$:
\begin{multline*}
\pl{}{t}\left.\pl{}{s}\h^n(\Sigma(t,s))\right|_{t=s=0} = \int_{\Sigma} div_{\Sigma} (\nabla_{X}Z) + div_{\Sigma} (X) div_{\Sigma}(Z) + \sum_{i=1}^{n} \la (D_{e_i}X)^{\bot},(D_{e_i}Z)^{\bot}\ra \\
- \sum_{i=1}^n  R_{\mathcal{N}}(X,\tau_i,Z,\tau_i) -\sum_{i,j=1}^n \la \tau_{i}, \nabla_{\tau_{j}} X\ra \la \tau_{j}, \nabla_{\tau_{i}} Z)\ra \id \h^n,
\end{multline*}
where $\nabla$ and $R_\mathcal{N}$ are the Levi-Civita connection and Riemann tensor of $\mathcal{N}$ respectively, $\bot$ denotes the component that is normal to $\Sigma$ \textit{and} tangent to $\mathcal{N}$, and, given any vector field $Y$ that is tangent to $\mathcal{N}$, we write $div_\Sigma(Y) = \sum_{i=1}^{n} \la \nabla_{\tau_i}Y,\tau_i \ra$.

In the sequel of this Appendix, the Levi-Civita connection of $\Sigma$ will be denoted by $\D^{\Sigma}$ and  $\D^\bot$ will denote the connection of the normal bundle of $\Sigma$ in $\mathcal{N}$. The projections $^\bot$, $^\top$ are the normal and tangential projections for $\Sigma(t,s) \emb \mathcal{N}$ respectively so that
$$\text{$X=X^\top + X^\bot$ and $Z=Z^\top + Z^\bot$}$$
since they are both tangent to $\mathcal{N}$. According to that decomposition, we write
\begin{multline} \label{Aeq1}
\pl{}{t}\left.\pl{}{s}\h^n(\Sigma(t,s))\right|_{t=s=0}  = \int_{\Sigma} div_\Sigma (\D_{X}Z) + \mathcal{A}(X^{\bot},Z^{\bot}) \\ + \mathcal{B}(X^{\top},Z^{\top}) 
+ \mathcal{C}(X^{\top},Z^{\bot}) + \mathcal{C}(Z^{\top},X^{\bot}) \id  \h^{n},
\end{multline}
where, using the identity $div_{\Sigma}(Y) = div_{\Sigma} (Y^{\top}) - \la Y^{\top}, H\ra$ for $H$ the mean curvature vector of $\Sigma \emb \mathcal{N}$ and any vector field $Y$ tangent to $\mathcal{N}$, the terms $\mathcal{A}, \mathcal{B}$ and $\mathcal{C}$ are given by:
\begin{align}
\mathcal{A}(X^{\bot},Z^{\bot}) = & \la X^{\bot}, H\ra\la Z^{\bot},H \ra + \sum_{i=1}^n\la\Db_{\tau_i} (X^\bot) , \Db_{\tau_i} (Z^\bot)\ra \nonumber \\ 
& - \sum_{i=1}^{n} R_\mathcal{N} (X^\bot,\tau_i, Z^\bot,\tau_i) - \sum_{i,j=1}^n \la \tau_i,\nabla_{\tau_j} (X^\bot) \ra \la \tau_j, \nabla_{\tau_i} (Z^\bot) \rangle, \label{AeqA} \\ 
\mathcal{B}(X^{\top},Z^{\top}) = & div_{\Sigma}(X^{\top})div_{\Sigma}(Z^{\top}) + \sum_{i=1}^{n} \la A(X^{\top},\tau_i),A(Z^{\top},\tau_i))\ra \nonumber \\
& - \sum_{i=1}^{n} R_\mathcal{N} (X^\top,\tau_i, Z^\top,\tau_i) - \sum_{i,j=1}^{n} \la \tau_i,\nabla_{\tau_j} (X^\top) \ra \la \tau_j, \nabla_{\tau_i} (Z^\top) \ra,  \nonumber \\  
\mathcal{C}(X^{\top},Z^{\bot}) = & - \la Z^{\bot},H \ra div_{\Sigma}(X^\top) + \sum_{i=1}^n\la A(X^{\top},\tau_i), \nabla^{\bot}_{\tau_i}(Z^{\bot}) \ra  \nonumber \\ 
& - \sum_{i=1}^n R_{\mathcal{N}}(X^{\top},\tau_i,Z^{\bot},\tau_i) + \sum_{i=1}^n \la A(\nabla^{\Sigma}_{\tau_i}(X^{\top}),\tau_i),Z^{\bot} \ra. \nonumber
\end{align}

Using the Gauss equation for $\Sigma \emb \mathcal{N}$ (valid for all $U,V,W,Y$ tangent to $\Sigma$), namely
\begin{equation*}
R_{\Sigma}(U,V,W,Y) = R_{\mathcal{N}}(U,V,W,Y) + \la A(U,W),A(V,Y) \ra - \la A(U,Y),A(V,W) \ra,
\end{equation*}
we can write
\begin{align}
\mathcal{B}(X^{\top},Z^{\top}) = & div_{\Sigma}(X^{\top})div_{\Sigma}(Z^{\top}) + \la d (X^{\top})^{\flat} , d (Z^{\top})^{\flat} \ra  - \la \nabla^{\Sigma} (X^{\top}), \nabla^{\Sigma} (Z^{\top}) \ra \nonumber \\ & - Ric_{\Sigma}(X^{\top},Z^{\top}) + \la A(X^{\top},Z^{\top}),H \ra. \label{AeqB}
\end{align}
In the above formula, we use the standard musical notation relating vector fields to their dual one-forms, and $d$ denotes the exterior differential. Defining the one-form $\omega$ in $\Sigma$ by
\begin{equation} 
\omega(\xi) = \la A(Z^{\top},\xi),X^{\bot}\ra + \la A(X^{\top},\xi),Z^{\bot} \ra - \la Z^{\bot}, H \ra \la X^{\top}, \xi \ra - \la X^{\bot}, H \ra \la Z^{\top}, \xi \ra, \label{AeqO}
\end{equation}
for all vectors $\xi$ tangent to $\Sigma$, a straightforward computation using the Codazzi equation (valid for all for $U,V, W$ tangent to $\Sigma$ and $\eta$ normal to $\Sigma$), namely
\begin{equation*}
\la (\nabla_{U}A)(V,W),\eta \ra - \la (\nabla_{V}A)(U,W),\eta \ra = R_{\mathcal{N}}(U,V,\eta,W),
\end{equation*}
gives
\begin{equation}
\mathcal{C}(X^{\top},Z^{\bot}) + \mathcal{C}(Z^{\top},X^{\bot}) = div_{\Sigma} \omega + \la \nabla^{\bot}_{X^{\top}} (Z^{\bot}), H \ra + \la \nabla^{\bot}_{Z^{\top}} (X^{\bot}), H \ra. \label{AeqC}
\end{equation}
Finally,
\begin{align}
div_{\Sigma}(\nabla_{X}Z) = & div_{\Sigma}((\nabla_{X}Z)^{\top}) - \la \nabla_{X^{\bot}}(Z^{\bot}), H \ra - \la A(X^{\top},Z^{\top}), H \ra \nonumber \\ & - \la \nabla_{X^{\bot}}(Z^{\top}), H \ra - \la \nabla^{\bot}_{X^{\top}}(Z^{\bot}), H \ra. \label{AeqD}
\end{align}
Substituting equations (\ref{AeqA}),(\ref{AeqB}), (\ref{AeqC}) and (\ref{AeqD}) into \ref{Aeq1}, we obtain
\begin{align*}
\pl{}{t}\left.\pl{}{s}\h^n(\Sigma(t,s))\right|_{t=s=0}  = & \int_{\Sigma} \la\Db (X^\bot) , \Db (Z^\bot)\ra - Ric_\mathcal{N} (X^\bot, Z^\bot) - \la (X^\bot),(Z^\bot) \ra|A|^2 
\\
&  + \la X^{\bot}, H\ra\la Z^{\bot},H \ra - \la \nabla_{X^{\bot}} (Z^{\bot}), H \ra - \la [X^{\bot},Z^{\top}],H \ra \\
&   + div_{\Sigma}(X^{\top}) div_{\Sigma}(Z^{\top}) + \la d (X^{\top})^{\flat} , d (Z^{\top})^{\flat} \ra - \la \nabla^{\Sigma} X^{\top}, \nabla^{\Sigma} Z^{\top}\ra \\
& - Ric_{\Sigma}(X^{\top},Z^{\top}) + div_\Sigma ((\nabla_{X}Z)^{\top}) + div_{\Sigma}\omega  \id \h^{n}.
\end{align*}
The final manipulation we perform is to integrate by parts. Choosing a local orthonormal frame $\{\tau_i\}$ geodesic at a point $p$ in $\Sigma$, the following computation at $p$ (where we sum over repeated indices $i$ and $j$) yields:
\begin{eqnarray*}
	div_{\Sigma}(\nabla^{\Sigma}_{X^{\top}}(Z^{\top})) & = & \tau_i \la \nabla^{\Sigma}_{X^{\top}}(Z^{\top}),\tau_i \ra \\
	& = & \tau_i (\la \nabla^{\Sigma}_{\tau_j}(Z^{\top}),\tau_i \ra \la X^{\top},\tau_j \ra) \\
	& = & \la \nabla^{\Sigma}_{\tau_i}\nabla^{\Sigma}_{\tau_j}(Z^{\top}),\tau_i\ra\la X^{\top},\tau_j \ra + \la \nabla^{\Sigma}_{\tau_j}(Z^{\top}),\tau_i \ra \la \nabla^{\Sigma}_{\tau_i}(X^{\top}),\tau_j \ra \\
	& = & R_{\Sigma}(\tau_i,\tau_j,\tau_i,Z^{\top})\la X^{\top},\tau_j\ra + \la \nabla^{\Sigma}_{\tau_j} \nabla^{\Sigma}_{\tau_i}(Z^{\top}),\tau_i\ra \la X^{\top},\tau_j\ra + \la \nabla_{\tau_j}Z^{\top},\tau_i \ra\la \nabla_{\tau_i}X^{\top},\tau_j \ra  \\
	& = & Ric_{\Sigma}(X^{\top},Z^{\top}) + d(div_{\Sigma}(Z^{\top}))(X^{\top}) - \left(\la d (X^{\top})^{\flat} , d (Z^{\top})^{\flat} \ra - \la \nabla^{\Sigma} X^{\top}, \nabla^{\Sigma} Z^{\top}\ra\right). 
\end{eqnarray*} 
Therefore
\begin{multline*}
\int_{\Sigma}  div_{\Sigma}(X^{\top})div_{\Sigma}(Z^{\top}) + \la d (X^{\top})^{\flat} , d (Z^{\top})^{\flat} \ra  - \la \nabla^{\Sigma} X^{\top}, \nabla^{\Sigma} Z^{\top}\ra  - Ric_{\Sigma}(X^{\top},Z^{\top})  \id \h^{n}  \\
=  \int_{\Sigma} div_{\Sigma}(div_{\Sigma}(Z^{\top})(X^{\top})) - div_{\Sigma}(\nabla^{\Sigma}_{X^{\top}}(Z^{\top})) \id \h^{n} \\ = \int_{\partial \Sigma} div_{\Sigma}(Z^{\top})\la X^{\top}, \nu \ra -  \la \nabla_{X^{\top}}(Z^{\top}),\nu \ra \id \h^{n-1},
\end{multline*}
where $\nu$ denotes the unit outward conormal of $\Sigma$. Thus,
\begin{align} \label{AeqX}
\pl{}{t}\left.\pl{}{s}\h^n(\Sigma(t,s))\right|_{t=s=0}  
=&\int_{\Sigma} \la\Db (X^\bot) , \Db (Z^\bot) \ra - Ric_\mathcal{N} (X^\bot, Z^\bot) - |A|^2\la X^\bot , Z^\bot \ra \id \h^n \nonumber \\
& + \int_{\Sigma} \la X^{\bot}, H\ra\la Z^{\bot},H \ra - \la \nabla_{X^{\bot}} (Z^{\bot}), H \ra - \la [X^{\bot},Z^{\top}],H \ra \id \h^n \nonumber \\
& + \int_{\de \Sigma} \la \D_X Z, \nu\ra + \omega(\nu) + div_{\Sigma}(Z^{\top})\la X^{\top}, \nu \ra - \la \nabla_{X^{\top}}(Z^{\top}),\nu \ra \id \h^{n-1}.
\end{align}  
where $\omega$ is is the one-form defined in (\ref{AeqO}). Since 
\begin{eqnarray} \label{Aeqbdry2}
\la \nabla_{X}Z , \nu \ra + \omega(\nu) & = & \la \nabla_{X^{\bot}}(Z^{\bot}),\nu \ra + \la \nabla_{X^{\bot}}(Z^{\top}),\nu \ra + \la \nabla_{X^{\top}}(Z^{\bot}),\nu \ra + \la \nabla_{X^{\top}}(Z^{\top}),\nu \ra \nonumber \\
& & + \la A(Z^{\top},\nu),X^{\bot}\ra + \la A(X^{\top},\nu),Z^{\bot}\ra - \la Z^{\bot}, H \ra \la X^{\top}, \nu \ra - \la X^{\bot}, H \ra \la Z^{\top}, \nu \ra \nonumber \\
& = & \la \nabla_{X^{\bot}}(Z^{\bot}),\nu \ra + \la [X^{\bot},Z^{\top}],\nu \ra + \la \nabla_{X^{\top}}(Z^{\top}),\nu \ra \nonumber \\
&   & - \la Z^{\bot}, H \ra \la X^{\top}, \nu \ra - \la X^{\bot}, H \ra \la Z^{\top}, \nu \ra,
\end{eqnarray}
combining (\ref{AeqX}) and (\ref{Aeqbdry2}) we obtain:
\begin{align*}
\pl{}{t}\left.\pl{}{s}\h^n(\Sigma(t,s))\right|_{t=s=0}  
=&\int_{\Sigma} \la\Db (X^\bot) , \Db (Z^\bot) \ra - Ric_\mathcal{N} (X^\bot, Z^\bot) - |A|^2\la X^\bot , Z^\bot \ra \id \h^n \\
& + \int_{\Sigma} \la X^{\bot}, H\ra\la Z^{\bot},H \ra - \la \nabla_{X^{\bot}} (Z^{\bot}), H \ra - \la [X^{\bot},Z^{\top}],H \ra \id \h^n \\
& + \int_{\de \Sigma} \la \D_{X^{\bot}} (Z^{\bot}), \nu\ra + \la [X^{\bot},Z^{\top}],\nu \ra + div_{\Sigma}(Z^{\top})\la X^{\top}, \nu \ra  \id \h^{n-1} \\
& - \int_{\de \Sigma}  \la Z^{\bot}, H \ra \la X^{\top}, \nu \ra + \la X^{\bot}, H \ra \la Z^{\top}, \nu \ra \id \h^{n-1}.
\end{align*} 
We remark that, in the above formula, whereas $X$ and $Z$ are tangent to $\partial \mathcal{N}$ by assumption, $X^{\top}, X^{\bot}, Z^{\top}$ and $Z^{\bot}$ are tangent to $\partial \mathcal{N}$ if and only if $\nu$ is orthogonal to $\partial \mathcal{N}$.

When $\Sigma$ is a free boundary minimal hypersurface in $\mathcal{N}$ we recover the usual second variation formula,
\begin{equation*}
\pl{}{t}\left.\pl{}{s}\h^n(\Sigma(t,s))\right|_{t=s=0} = \check{Q}_{\Sigma}(X^{\bot},Z^{\bot})=\check{Q}_{\Sigma}(Z^{\bot},X^{\bot}).
\end{equation*}
where $\check{Q}_{\Sigma}$ is the index form on the normal bundle of $\Sigma$. In fact, in that situation the terms containing $H$ are obviously zero and, since \textit{all} components $X^{\top}$, $X^{\bot}$, $Z^{\top}$, $Z^{\bot}$ are orthogonal to $\nu$ (i.e. they are tangent to $\partial \mathcal{N}$), $\la X^{\bot},\nu \ra$ vanishes identically and 
$$ \la [X^{\top},Z^{\bot}] ,\nu \ra = \la \nabla_{X^{\top}}(Z^{\bot}),\nu \ra - \la \nabla_{Z^{\bot}}(X^{\top}),\nu \ra = \la II^{\partial \mathcal{N}}(X^{\top},Z^{\bot}),\nu \ra - \la II^{\partial \mathcal{N}}(Z^{\bot},X^{\top}),\nu \ra = 0.$$

For the sake of the argument in Section \ref{sec:thm4}, it is convenient to compare $\nu$ with the unit outward pointing normal vector field to $\partial \mathcal{N}$, denoted by $\nu_{e}$. In a succinct way, we write

\begin{align*}
\pl{}{t}\left.\pl{}{s}\h^n(\Sigma(t,s))\right|_{t=s=0}  
=&\int_{\Sigma} \la\Db (X^\bot) , \Db (Z^\bot) \ra - Ric_\mathcal{N} (X^\bot, Z^\bot) - |A|^2\la X^\bot , Z^\bot \ra \id \h^n \\
& + \int_{\de \Sigma} \la \nabla_{X^{\perp}} Z^{\perp}, \nu_e \ra \id \h^{n-1} \\ 
& + \int_{\Sigma} \Xi_1(X,Z,H) \id \h^n + \int_{\partial \Sigma} \Xi_2(X,Z,H,\nu,\nu_e) \id \h^{n-1},
\end{align*}
where
\begin{equation*}
\Xi_1(X,Z,H) = \la X^{\bot}, H\ra\la Z^{\bot},H \ra - \la \nabla_{X^{\bot}} (Z^{\bot}), H \ra - \la [X^{\bot},Z^{\top}],H \ra
\end{equation*}
and
\begin{align*}
\Xi_2(X,Z,H,\nu,\nu_e) = & \la \nabla_{X^{\bot}}Z^{\bot},\nu-\nu_e \ra + \la [X^{\bot},Z^{\top}],\nu \ra +  div_{\Sigma}(Z^{\top})\la X^{\top}, \nu \ra \\
& - \la Z^{\bot}, H \ra \la X^{\top}, \nu \ra - \la X^{\bot}, H \ra \la Z^{\top},\nu \ra.
\end{align*}
$\Xi_1$ is linear in $X$ and $Z$, is of first order in these entries, and $\Xi_1 \to 0$ smoothly as $H\to 0$ smoothly. $\Xi_2$ depends on $X$ and $Z$ similarly, and $\Xi_2 \to 0$ smoothly as $H\to 0$ and $\nu\to \nu_e$ (for that forces $X^{\bot}$, $X^{\top}$, $Z^{\bot}$ and $Z^{\top}$ to become tangent to $\partial \mathcal{N}$)). In the case that $\Sigma$ is uniformly and smoothly close to a fixed minimal and free boundary hypersurface, we will have $\Xi_1, \Xi_2$ uniformly small, and $\la \nabla_{X^{\perp}} Z^{\perp}, \nu_e \ra$ uniformly close to $\la II^{\partial \mathcal{N}}( (X^{\bot})^{\top_{\partial \mathcal{N}}}, (Z^{\bot})^{\top_{\partial \mathcal{N}}}),\nu_e\ra$ (in a smooth sense).

\section{Proof of Proposition \ref{pro:EuclFol}}\label{app_fol}

\begin{proof}[Proof of Proposition \ref{pro:EuclFol}] 

Here we need to show that if $g$ is close enough to the Euclidean metric, and $t, w$ are also small enough, then one can find an implicitly defined function $u=u(t,g,w)$ so that $\Phi(t,g,w,u(t,g,w))=(0,0,0)$. 
                
                The functional $\Phi$ is $C^1$ (see e. g. Appendix of \cite{Whi87A}) and its differential can be computed using the result of Proposition 17 in \cite{Amb15}, so that
                \[
                D_4\Phi(t,\delta,0,0)[v]=\frac{d}{ds}_{|{s=0}}\Phi(t,\delta,0,sv)=\left(\Delta_{\delta}v,\frac{\partial v}{\partial \nu_{\delta}},v|_{\Gamma_2}\right)
                \]        
                where $\delta$ stands for the Euclidean metric and $\nu_{\delta}=-\frac{\partial}{\partial x^1}$ in the standard Euclidean coordinates we are adopting.
                
               In order to apply the implicit function theorem we need to study the mapping properties of the linear operator 
               \[
               D_4\Phi(t,\delta,0,0): \ Y\to Z_1\times Z_2\times Z_3.
               \]
               First of all, it follows at once from Theorem 6,I in \cite{Mir55} together with standard Schauder estimates that for any triple $(f_1,f_2,f_3)\in Z_1\times Z_2\times Z_3$ the problem
               \begin{equation}\label{eq:linsys}
               \begin{cases}
               \Delta_{\delta}v=f_1 & \textrm{in} \ S_{\theta}\\
               \frac{\partial v}{\partial \nu_{\delta}}=f_2 & \textrm{in} \ \Gamma_1 \\
               v=f_3 & \textrm{in} \ \Gamma_2
                  \end{cases}
               \end{equation}
               admits one (and only one) solution $u\in C(\overline{S_{\theta}})\cap C^{2,\alpha}(\Omega)$ where $\Omega$ is any relatively compact domain in $\overline{S_{\theta}}\setminus (\Gamma_1\cap \Gamma_2)$. Of course, this implies at once that the map above is injective. For what concerns surjectivity, it is then enough to invoke Theorem 1 in \cite{AK82}, which ensures that any \textsl{bounded} solution to \eqref{eq:linsys} does in fact belong to $C^{2,\alpha}(\overline{S_{\theta}})$ provided $\alpha<\alpha_0$ for $\alpha_0=\left\{\frac{\pi}{2\theta}\right\}$ where $\left\{x\right\}\in [0,1)$ is defined by $\left\{x\right\}=x-\floor{x}$. In fact, it follows from the explicit barrier construction presented in the first part of the proof of Lemma 1 in the same reference, aimed at handling the situation locally around the edge points, that one gains a global Schauder estimate of the form
               \begin{equation}\label{eq:Sch1}
               \|u\|_Y\leq C\left(\|u\|_{C^0(\overline{S_{\theta}})}+\|f_1\|_{Z_1}+\|f_2\|_{Z_2}+\|f_3\|_{Z_3}\right)
               \end{equation}
               for any solution $u\in Y$ of \eqref{eq:linsys}.
            In this respect, all we need to check is that this can in fact be upgraded to 		
            	\begin{equation*}\label{eq:Sch2}
            	\|u\|_Y\leq C'\left(\|f_1\|_{Z_1}+\|f_2\|_{Z_2}+\|f_3\|_{Z_3}\right)
            	\end{equation*}		
            			as this patently completes the proof that $D_4\Phi(t,\delta,0,0): \ Y\to Z_1\times Z_2\times Z_3$ is indeed a Banach isomorphism. This is rather standard: assuming by contradiction that were not the case, one could find a sequence $\left\{u_k\right\}\subset Y$ such that, possibly by renormalizing
            			\[
            			\begin{cases}
            			\|u_k\|_Y=1 \\
            			\|\Delta_{\delta}u_k\|_{Z_1}+\|\frac{\partial u_k}{\partial\nu_{\delta}}\|_{Z_2}+\|u_k\|_{Z_3}\leq 1/k
            			\end{cases}
            			\]
            			for all $k\geq 1$ and hence, invoking the Arzel\'a-Ascoli compactness theorem
            			\[
            				u_k\to u    \ \textrm{in} \ C^{2}(\overline{S_{\theta}})
            			\]
            			for a subsequence which we shall not rename. In particular, one has at the same time
            			\[
            			\Delta_{\delta}u_k \to \Delta_{\delta}u \  \ \textrm{in} \ C^{0}(\overline{S_{\theta}}), \ \textrm{and} \ \Delta_{\delta}u_k\to 0 \ \textrm{in} \ C^{0,\alpha}(\overline{S_{\theta}})
            				\]
            				hence $u\in C^{2}(\overline{S_{\theta}})$ must be a solution of the homogeneous problem
            				\begin{equation}\label{eq:linsysH}
            				\begin{cases}
            				\Delta_{\delta}u=0 & \textrm{in} \ S_{\theta}\\
            				\frac{\partial u}{\partial \nu_{\delta}}=0 & \textrm{in} \ \Gamma_1 \\
            				u=0 & \textrm{in} \ \Gamma_2
            				\end{cases}
            				\end{equation}
            		so that we conclude $u=0$ by virtue of the aforementioned result by C. Miranda. But then $u_k\to 0$ in $C^2(\overline{S_{\theta}})$ and thus inequality \eqref{eq:Sch1} applied to $u_k$ immediately implies that $\|u_k\|_Y\to 0$ as we let $k\to\infty$, contrary to the fact that each function of the sequence has been rescaled so to have norm one. This contradiction completes the proof.
            	\end{proof}

\end{document}